\newcommand{\del}{\partial}
\newcommand{\C}{\mathbb{C}}
\newcommand{\Z}{\mathbb{Z}}
\newcommand{\SL}{\mathrm{SL}}
\renewcommand{\phi}{\varphi}
\renewcommand{\gcd}{\text{gcd}}
\newcommand{\e}{\varepsilon}
\renewcommand{\and}{\text{ and }}
\newcommand{\id}{\text{id}}
\newcommand{\fourmat}[4]{\begin{bmatrix}#1&#2\\#3&#4\end{bmatrix}}
\newcommand{\lilmat}[4]{\left[\begin{smallmatrix}#1&#2\\#3&#4\end{smallmatrix}\right]}
\newcommand{\into}{\hookrightarrow}
\newcommand{\onto}{\twoheadrightarrow}
\newcommand{\lildet}[4]{\left|\begin{smallmatrix}#1&#2\\#3&#4\end{smallmatrix}\right|}
\renewcommand{\v}{\mathbf{v}}
\renewcommand{\a}{\alpha}
\renewcommand{\b}{\beta}
\newcommand{\g}{\gamma}
\renewcommand{\S}{\mathcal{S}}
\newcommand{\SRY}{\mathcal{S}^{RY}}
\newcommand{\san}{\Sigma_{0,2,2}}
\newcommand{\sto}{\Sigma_{1,0,0}}
\newcommand{\twos}[2]{\left[\begin{smallmatrix}#1\\#2\end{smallmatrix}\right]}
\newcommand{\two}[2]{\begin{bmatrix}#1\\#2\end{bmatrix}}
\newtheorem{theorem}{Theorem}[section]
\newtheorem{lemma}[theorem]{Lemma}
\newtheorem{proposition}[theorem]{Proposition}
\newtheorem{corollary}[theorem]{Corollary}
\theoremstyle{definition}
\newtheorem{example}[theorem]{Example}
\newtheorem{remark}[theorem]{Remark}
\newtheorem{definition}[theorem]{Definition}
\newtheorem{problem}[theorem]{Problem}
\renewcommand{\arraystretch}{1.5}
\renewcommand{\g}{\gamma}
\renewcommand{\a}{\alpha}
\renewcommand{\b}{\beta}
\renewcommand{\san}{\Sigma_{0, 2, 2}}
\newcommand{\torus}{\Sigma_{1, 0, 0}}
\renewcommand{\SRY}{\ensuremath{ \mathcal S^{RY}}}
\title{A relationship between the Kauffman bracket skein algebras and Roger-Yang skein algebras of some small surfaces}
\author{Chloe Marple, Helen Wong}
\date{}
\begin{document}
\maketitle

\def\arraystretch{1.2}

\begin{abstract}
We calculate the Roger-Yang skein algebra of the annulus with two interior punctures, $\SRY(\san)$, and show there is a surjective homomorphism from this algebra to the Kauffman bracket skein algebra of the closed torus. Using this homomorphism, we characterize the irreducible, finite-dimensional representations of $\SRY(\san)$, showing that they can be described by certain complex data and that the correspondence is unique if certain polynomial conditions are satisfied. We also use the relationship with the skein algebra of the torus to compute structural constants for a bracelets basis for $\SRY(\san)$, giving evidence for positivity.
\end{abstract} 

\section{Introduction}

This paper investigates the algebraic structure and representation theory of the Roger-Yang skein algebra of a punctured surface, in the specific case of a twice-punctured annulus,  and relates this algebra to the skein algebra of another small surface –- the closed torus.

The Roger-Yang skein algebra is a construction that bridges quantum topology with hyperbolic geometry, extending the definition of the Kauffman bracket skein algebra.  The Kauffman bracket skein algebra was originally defined as a generalization of the Jones polynomial to 3-manifolds \cite{PrzSkeinAlg, TuraevSkeinAlg}, but later found connections with many other areas of mathematics.  It not only plays a key role in topological quantum field theories \cite{BHMV, BCGP}, but it is also a deformation quantization of the $\mathrm{SL}_2(\C)$-character variety of the surface,  which contains a copy of the Teichmüller space \cite{TuraevPoisson, BullockRings, BullockFrohmanJKB, PrzSikora}.  
More recently, a deeper understanding of  its multiplicative structure has led researchers to use algebraic geometric techniques  \cite{FKBLUnicity, GJSUnicity, KKUnicity}
to study its representation theory \cite{BonWonQTrace, BonWonSkeinReps1}.  

In \cite{RogerYang}, Roger and Yang  generalized the Kauffman bracket skein algebra for a surface with interior punctures so that generators included both framed loops and arcs between punctures in the thickened surface.  Skein relations were designed to capture the combinatorics of $\lambda$-lengths for Penner's decorated Teichmüller space of a punctured surface \cite{PennerDecoratedTeichCMP, PennerDecoratedTeichJDG}, and indeed, Roger and Yang proposed that their skein algebra should be a deformation quantization of Penner's decorated Teichmüller space of a punctured surface.  This was later verified  by relating the Roger-Yang skein algebra  with Fomin-Shapiro-Thurston's cluster algebra for punctured surfaces \cite{MoonWongQT, MoonWongComp}, or alternatively by relating it with a suitably defined quantum torus \cite{BKLQTrace}.   

It is conjectured that the representation theory of the Roger-Yang skein algebra should encode hyperbolic geometric information about the punctured surface.  In the analogous situation of the Kauffman bracket skein algebra for a surface, there is a Unicity Theorem that says that a Zariski open dense subset of the $\mathrm{SL}_2(\C)$-character variety of the surface where points in the subset are in one-to-one correspondence with an irreducible, finite-dimensional representation of the Kauffman bracket skein algebra
 \cite{FKBLUnicity, GJSUnicity, KKUnicity}.  Moreover, there are general constructions of irreducible, finite dimensional representations from geometric data \cite{BonWonSkeinReps2, BonWonSkeinReps3}, and for some small surfaces, these  representations have been explicitly computed \cite{TakenovReps, YuReps}.  In contrast, much less is known about the representation theory of the Roger-Yang skein algebra when the surface has at least two interior punctures (and no marked points on the boundary);  see \cite{KaruoMoonWongCenter} for further discussion and references.  

The goal of this paper is to examine one of the simpler non-trivial Roger-Yang skein algebra,  namely that of the twice-punctured annulus. Let us denote it by $\mathcal S^{RY}(\san)$. We first determine a finite presentation for $\mathcal S^{RY}(\san)$ where the generators can be chosen to be ordered (as in \cite{FrohmanAbdielAGT}); see Section \ref{sec:pres}.    Note that presentations are known only for a few cases for the usual Kauffman bracket skein algebra \cite{BullockPrz, CookeGenus2, CookeAskeyWilson} and the Roger-Yang skein algebra \cite{BobbPeiferInvolve, MoonPuncSph}.

Our calculation of the presentation of $\mathcal S^{RY}(\san)$ revealed a close relationship with the skein algebra of the one-holed torus, denoted $\mathcal S(\torus)$. In particular, there is a surjective homomorphism  of $\mathcal S^{RY}(\san)\to \mathcal S(\sto)$, and the kernel is nontrivial. In Section \ref{sec:relationships}, we construct this homomorphism, and give another example of such a relationship, between the Roger-Yang skein algebra of the thrice-punctured annulus and the Kauffman bracket skein algebra of the closed torus. In a forthcoming paper, we will investigate whether there is a general phenomenon explaining these relationships.

The existence of a surjection $\mathcal S^{RY}(\san)\to \mathcal S(\sto)$ led us to ask to what extent can we pull back results for the closed torus to the twice-punctured annulus. We were able to do so for two key pieces of algebraic information about $\mathcal S^{RY}(\san)$: its representations and its structural constants.

In Section \ref{sec:reps}, we study the representation theory of $\mathcal S^{RY}(\san)$ following the work of \cite{FKBLUnicity, TakenovReps, HavlicekPosta}  for the closed torus and $U_q(so_3)$.  We first show that, like the usual Kauffman bracket skein algebra,  $\mathcal S^{RY}(\san)$ is almost Azumaya, which implies that almost every irreducible, finite-dimensional representation of $\mathcal S^{RY}(\san)$ is determined by its central character.   When $A$ is a primitive $2N$ root of unity with $N$ odd,  the central character can be described by a complex 5-tuple, which we refer to as its classical shadow data.  We provide  polynomial conditions under which the classical shadow data uniquely determines a representation.  We also provide explicit computations of the representations, which are all $N$-dimensional.   

In Section \ref{sec:pos}, we give evidence of the positivity of structural constants for the bracelet basis for $\mathcal S^{RY}(\san)$, as conjectured for the usual Kauffman bracket skein algebra by Thurston in \cite{ThurstonPositive} and then extended to a specialized Roger-Yang skein algebra for surfaces with interior punctures by Karuo in \cite{KaruoPositive}. The version of the  positivity conjecture for skein algebras for surfaces without interior punctures was proven by \cite{MandelQinPositive} by applying a positivity result for cluster algebras \cite{GHKPositive}.  Other approaches have appealed to curve-counting methods related to Gromov-Witten theory \cite{BousseauPositive}.  Here, our method for the case with interior punctures is more elementary and based on skein theory, as in \cite{LePositive, TTYPositive, Rhea4puncsphere}.   By Frohman-Gelca's Product-to-Sum formula for the closed torus \cite{FrohmanGelca} and  the method of \cite{SikiFast}, we deduce the structural constants for infinitely many basis curves of $\mathcal S^{RY}(\san)$. While we were unable to prove it in general, our  results provide strong computational evidence for Karuo's positivity conjecture for $\mathcal S^{RY}(\san)$.   
  
\section{Acknowledgements}
We gratefully acknowledge the support of the mathematics departments of  Claremont McKenna College and Pomona College during this research.  We also thank Francis Bonahon, Hiroaki Karuo, and Han-Bom Moon for valuable feedback and suggestions on this research.    The authors were funded in part by DMS-2305414 from the US National Science Foundation.

\section{Definition of Roger-Yang skein algebra}
Let $\Sigma=\Sigma_{g,b, p}$ be the compact, oriented genus $g$ surface with $b$ boundary components with $p$ punctures.  Let $\del_0, \ldots, \del_b$  denote the boundary components and $v_1, \ldots v_p $ the punctures (sometimes also referred to as interior marked points). 
 
A \emph{framed link in $\Sigma \times [0,1]$} is a disjoint union of finitely many framed knots and framed arcs ending at the punctures $v_1, \ldots v_p $, regarded up to regular isotopy. For details, see \cite{RogerYang}.   We regard  the second coordinate of $\Sigma \times [0,1]$ as describing height, and take the convention that if an arc ends at $v_i$, the framing of the arc at $v_i$ is pointing towards increasing height.    Note that many framed arcs may end at a puncture $v_i$, but they must do so at differing heights along $v_i \times[0,1]$.  

We usually describe framed links using a diagram, that is, a projection of the framed link which is isotoped into general position. This means there are only transversal double points except at the punctures and the framing is always vertical.  Over- and under-crossings at the double points are indicated by  breaks in the projection.  If there are more than two ends of arcs at a puncture, they are further labelled to show the ordering by height.  Regular isotopy of the framed links in $\Sigma$ can be described using certain moves on their diagrams, as depicted in Figure~\ref{fig:Rmoves}. 

\begin{figure} 
\[ 		
\begin{minipage}{.07\textwidth}
		\includegraphics[width=\textwidth]{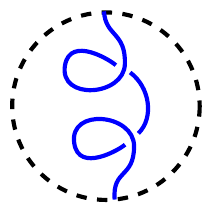}
		\end{minipage}
 \leftrightarrow
		\begin{minipage}{.07\textwidth}
		\includegraphics[width=\textwidth]{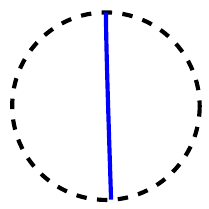}
		\end{minipage},
\quad
 \begin{minipage}{.07\textwidth}
		\includegraphics[width=\textwidth]{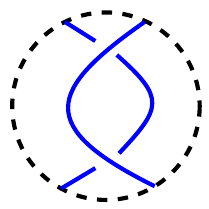}
		\end{minipage}
 \leftrightarrow
		\begin{minipage}{.07\textwidth}
		\includegraphics[width=\textwidth]{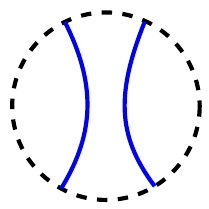}
		\end{minipage},
\quad
 \begin{minipage}{.07\textwidth}
		\includegraphics[width=\textwidth]{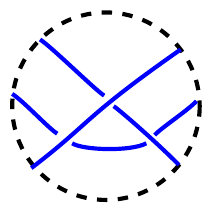}
		\end{minipage}
 \leftrightarrow
		\begin{minipage}{.07\textwidth}
		\includegraphics[width=\textwidth]{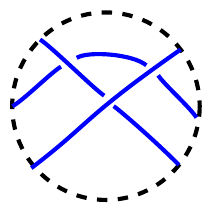}
		\end{minipage},		
\quad
 \begin{minipage}{.07\textwidth}
		\includegraphics[width=\textwidth]{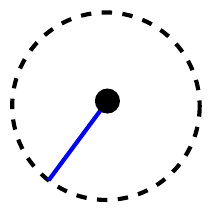}
		\end{minipage}
 \leftrightarrow
		\begin{minipage}{.07\textwidth}
		\includegraphics[width=\textwidth]{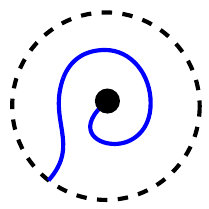}
		\end{minipage}, 
\quad 
 \begin{minipage}{.07\textwidth}
		\includegraphics[width=\textwidth]{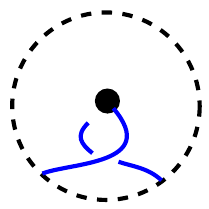}
		\end{minipage}
 \leftrightarrow
		\begin{minipage}{.07\textwidth}
		\includegraphics[width=\textwidth]{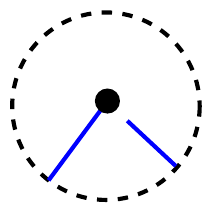}
		\end{minipage}
\]
\caption{Framed links that are equivalent up to regular isotopy} \label{fig:Rmoves}
\end{figure}

Given two framed links  $\alpha, \beta$ in $\Sigma \times [0,1]$, we may stack $\alpha$ on top of $\beta$ to obtain $\alpha * \beta$.  In particular, $\alpha * \beta$ is the union of the framed curve $\alpha' \subset  \Sigma \times [0, \frac12]$ (obtained by rescaling $\alpha$ in $  \Sigma \times [0, 1]$ vertically by half) and of  the framed curve $\beta' \subset  \Sigma \times [\frac12, 1]$ (obtained by rescaling $\beta$ in $\Sigma \times [0, 1]$ vertically by half).     

Let $R$ be a commutative domain, and $A \in R$ be an invertible element with distinguished square roots $A^{\pm 1/2}$.  For every $i = 1, \ldots p$, we identify the $i$th puncture with the indeterminate variable $v_i$. 

\begin{definition}\label{def:RYalgebra}
The \emph{Roger-Yang skein algebra $\SRY(\Sigma)$} is the $R[v_1^{\pm 1} , \ldots v_p^{\pm 1} ]$-algebra freely generated by framed links in $\Sigma \times [0,1]$ modded out by the following relations:
\begin{align*}
&1)
\quad
\begin{minipage}{.5in}\includegraphics[width=\textwidth]{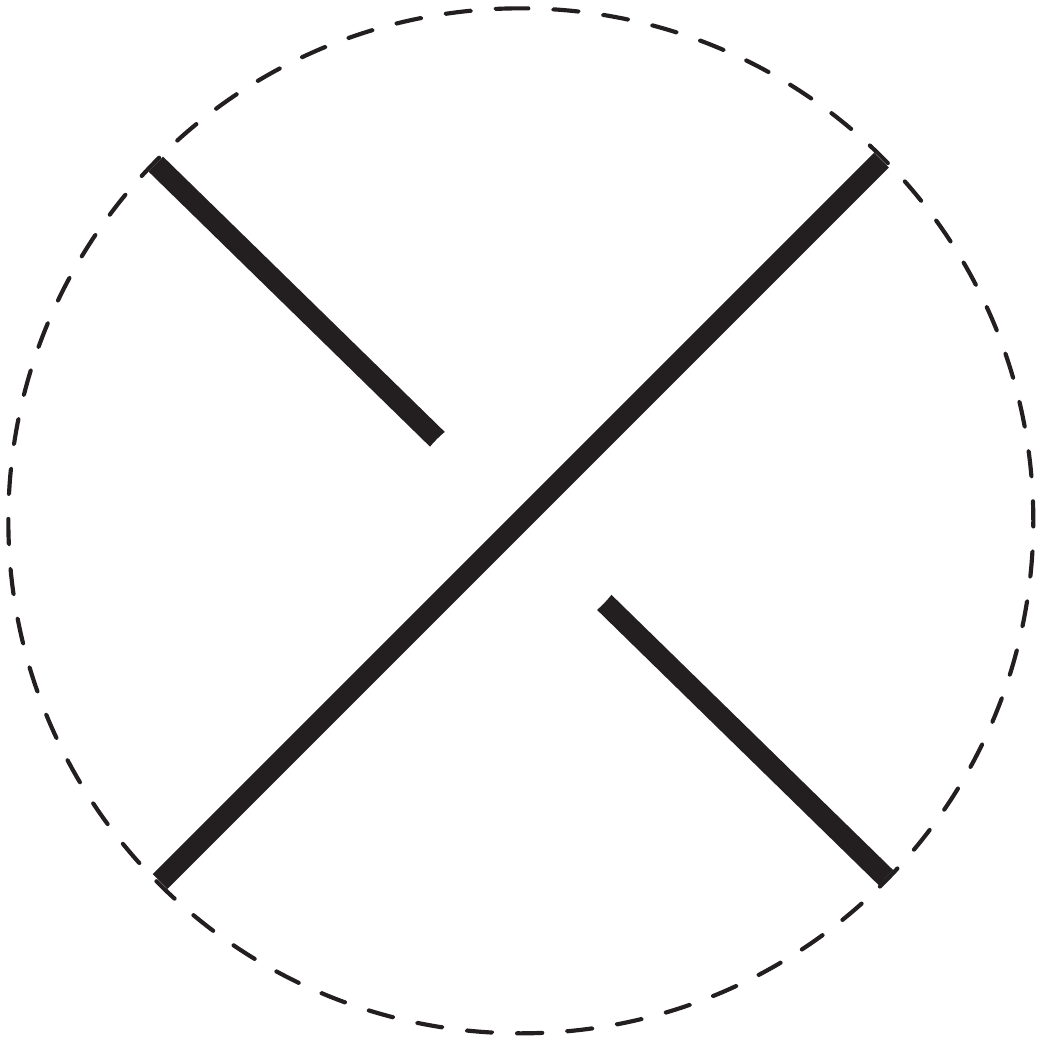}\end{minipage} 
-  \left( A\begin{minipage}{.5in}\includegraphics[width=\textwidth]{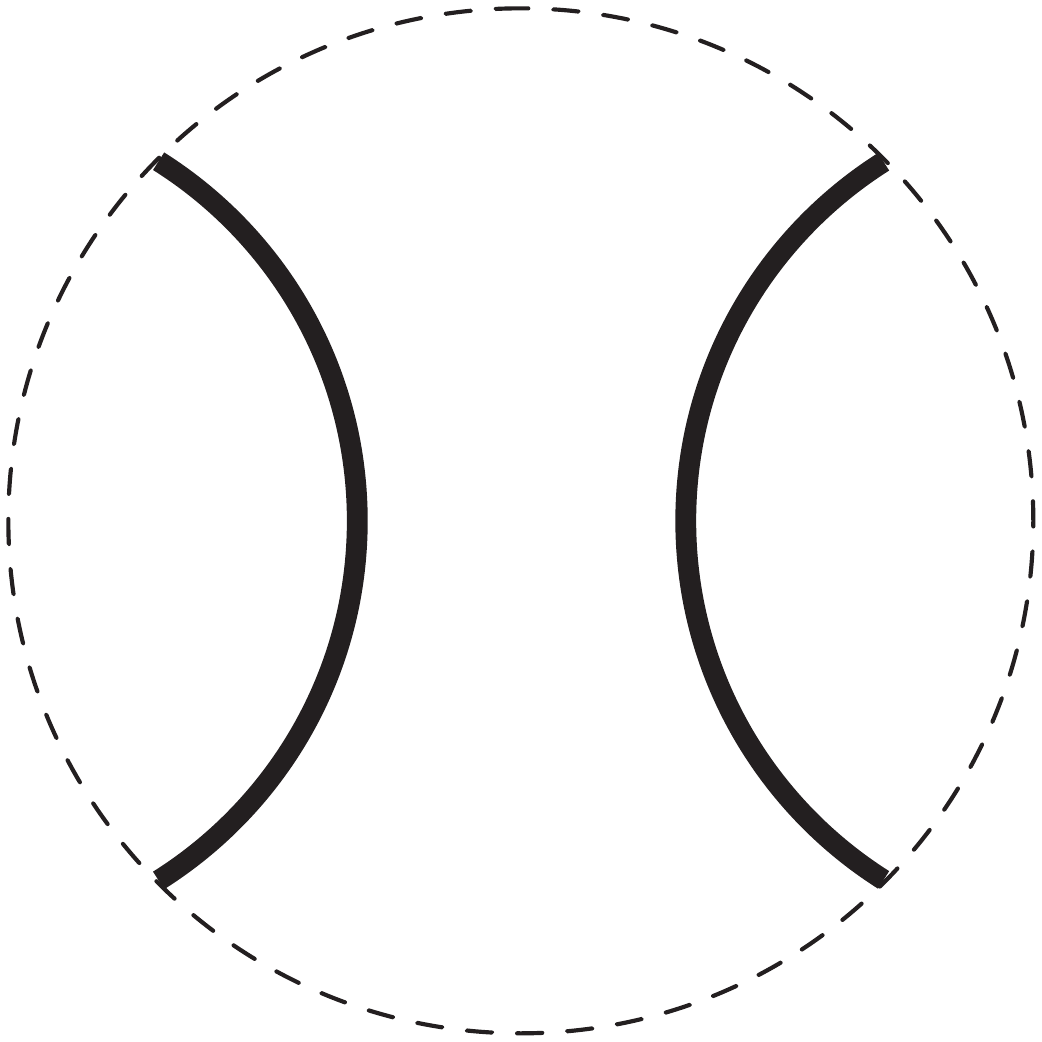}\end{minipage} 
+A^{-1}\begin{minipage}{.5in}\includegraphics[width=\textwidth]{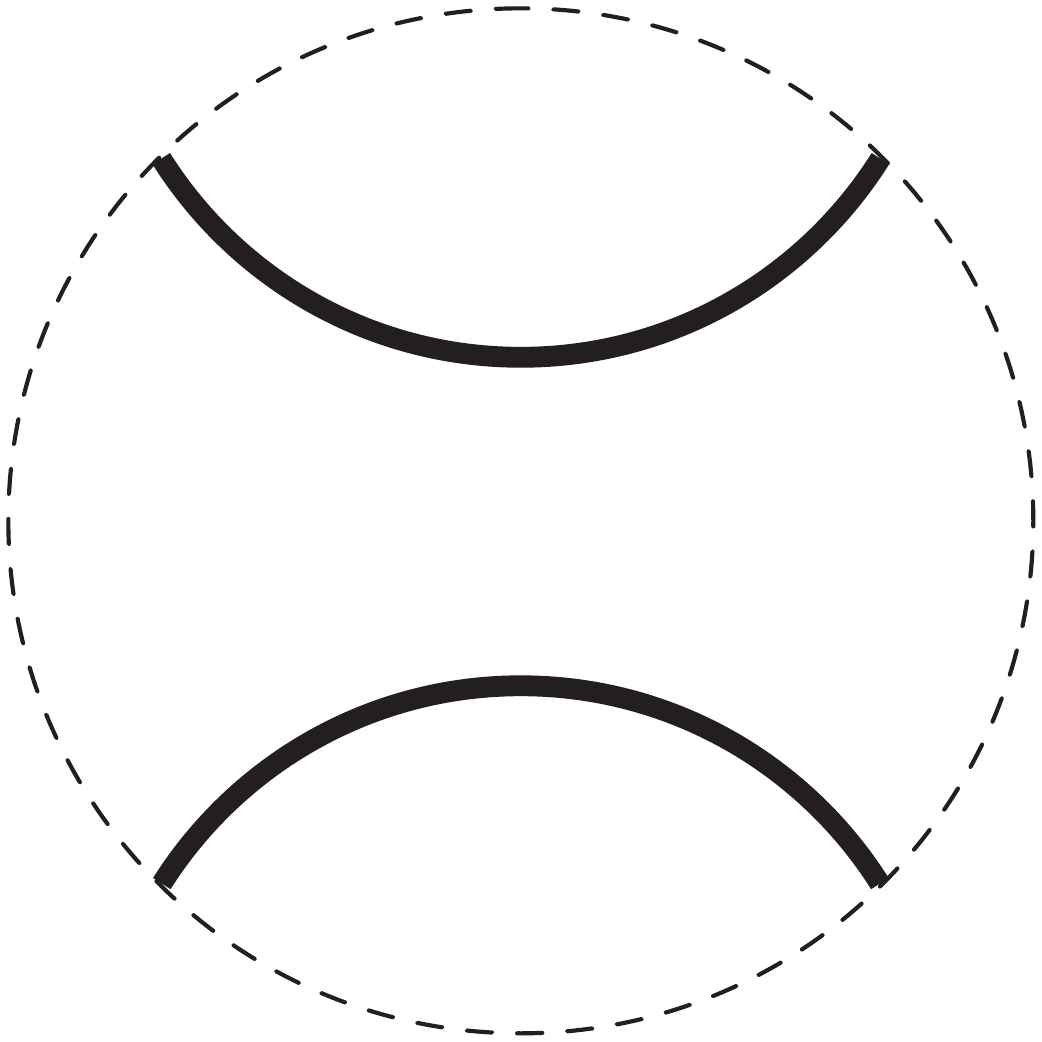}\end{minipage}  \right)\\
&2)
\quad 
v_i \begin{minipage}{.5in}\includegraphics[width=\textwidth]{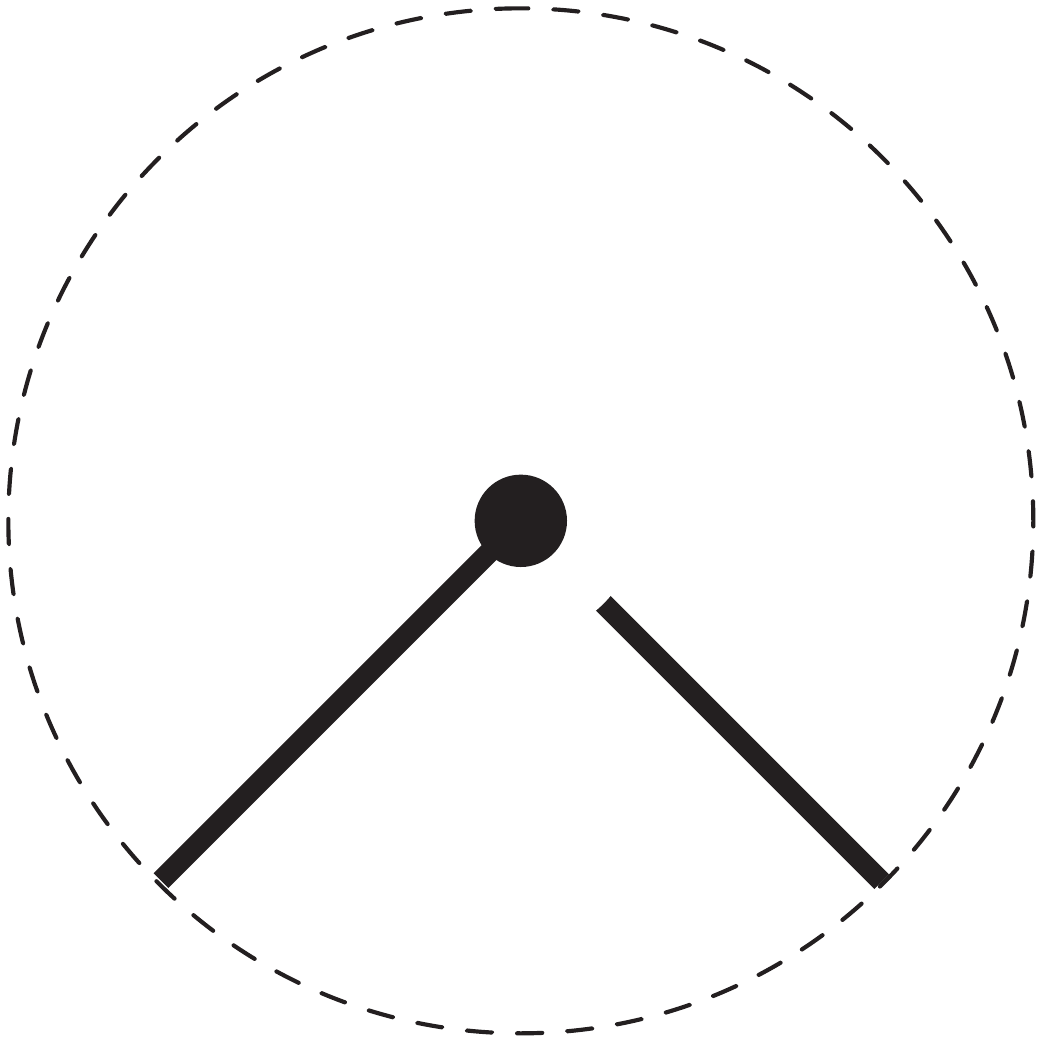}\end{minipage} 
- \left(  A^{1/2}\begin{minipage}{.5in}\includegraphics[width=\textwidth]{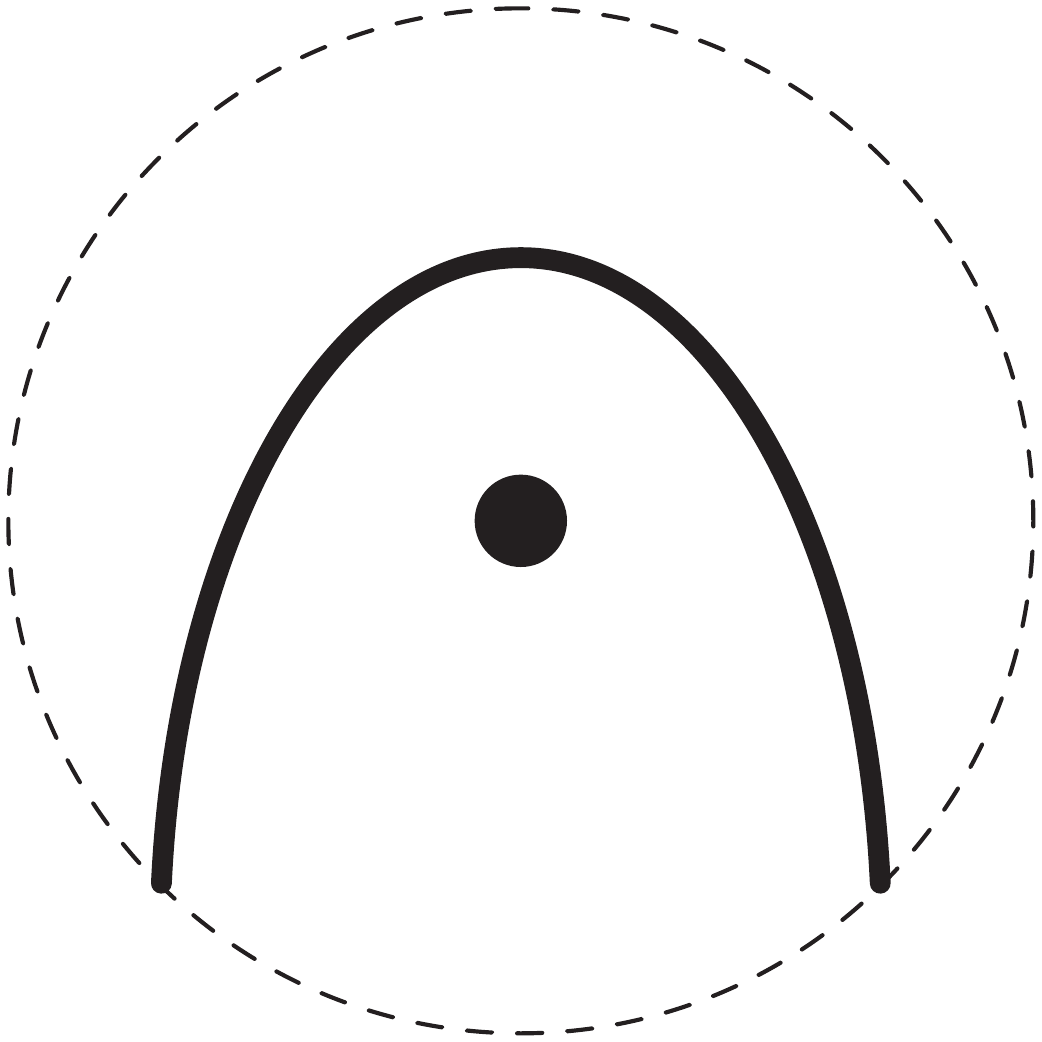}\end{minipage} 
+ A^{-1/2}\begin{minipage}{.5in}\includegraphics[width=\textwidth]{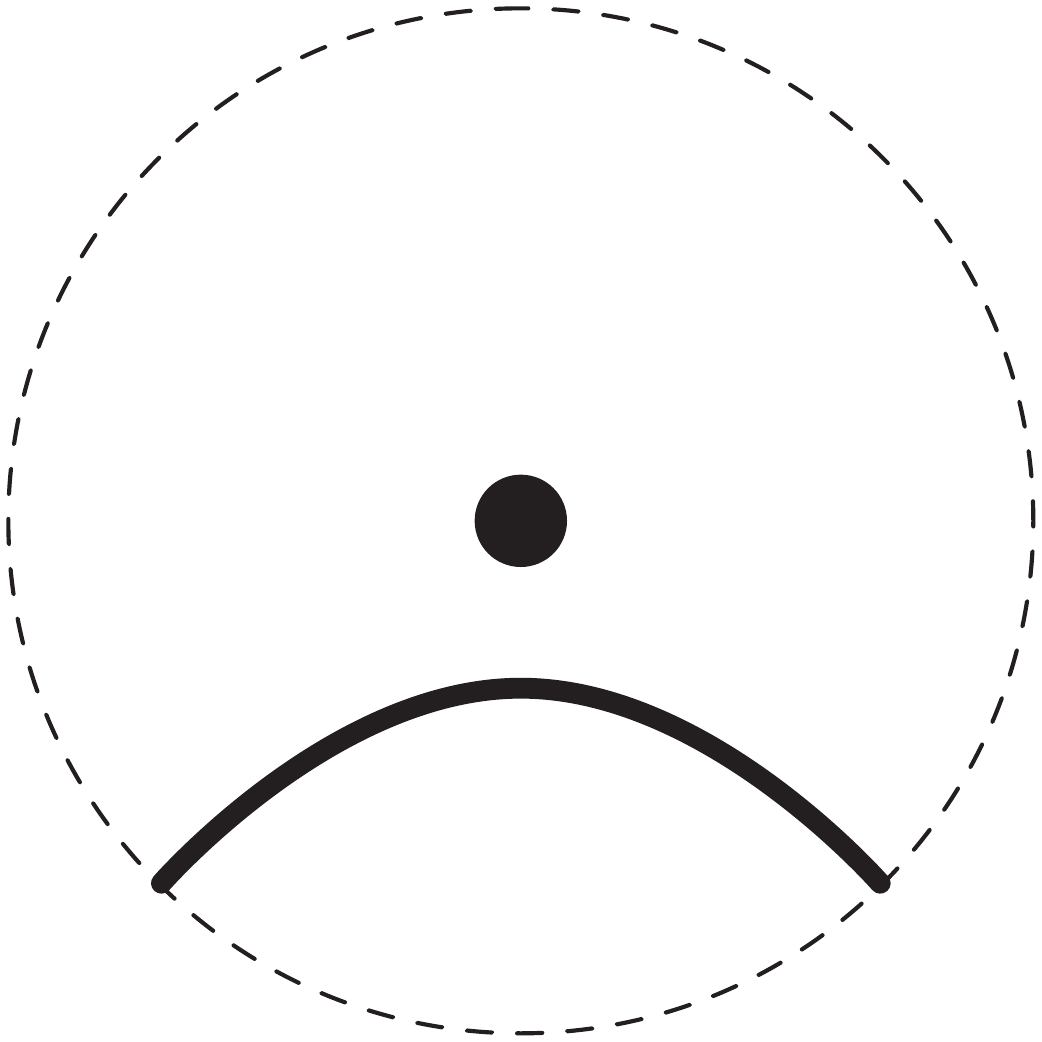}\end{minipage}  \right)\\
&3)
\quad 
\begin{minipage}{.5in}\includegraphics[width=\textwidth]{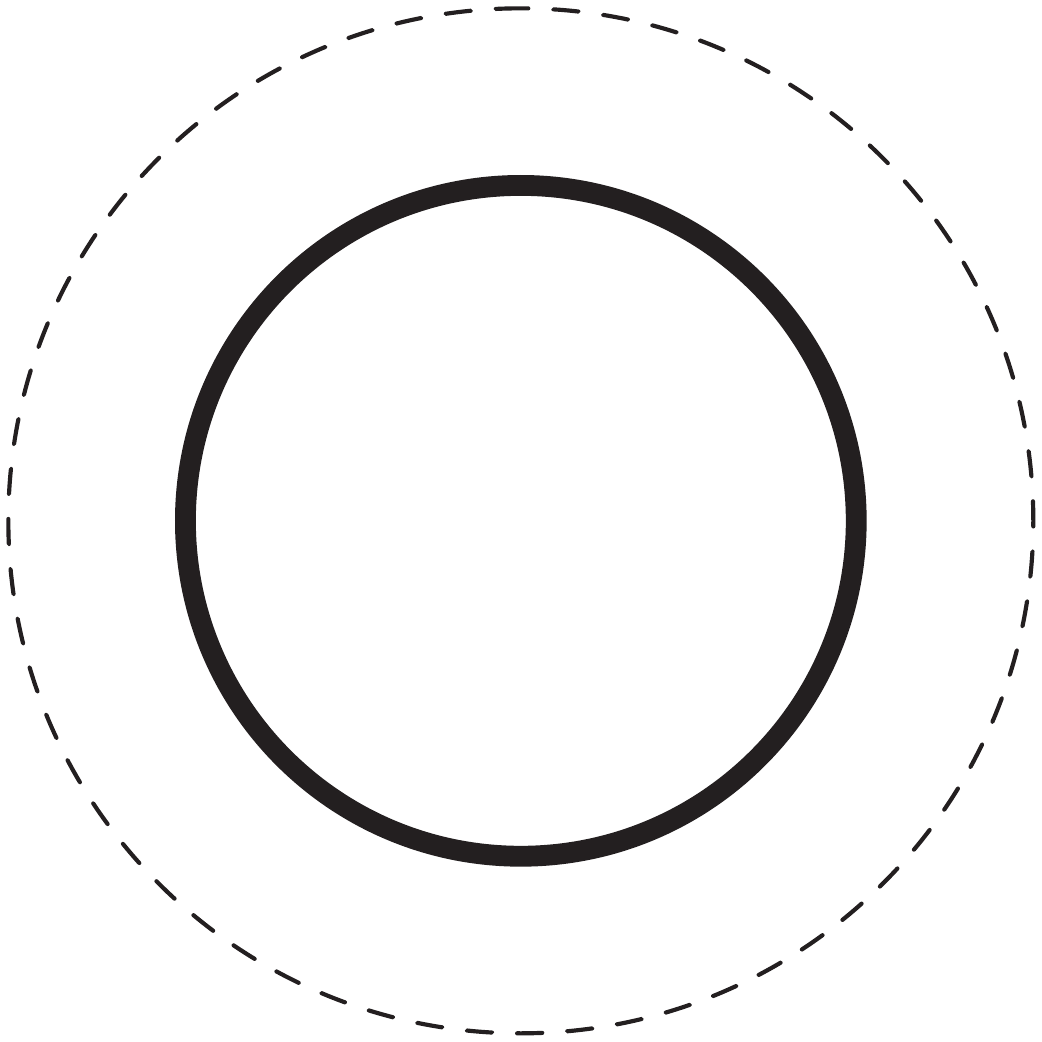} \end{minipage} 
- ( - A^2 - A^{-2} )\\
&4)
\quad 
\begin{minipage}{.5in}\includegraphics[width=\textwidth]{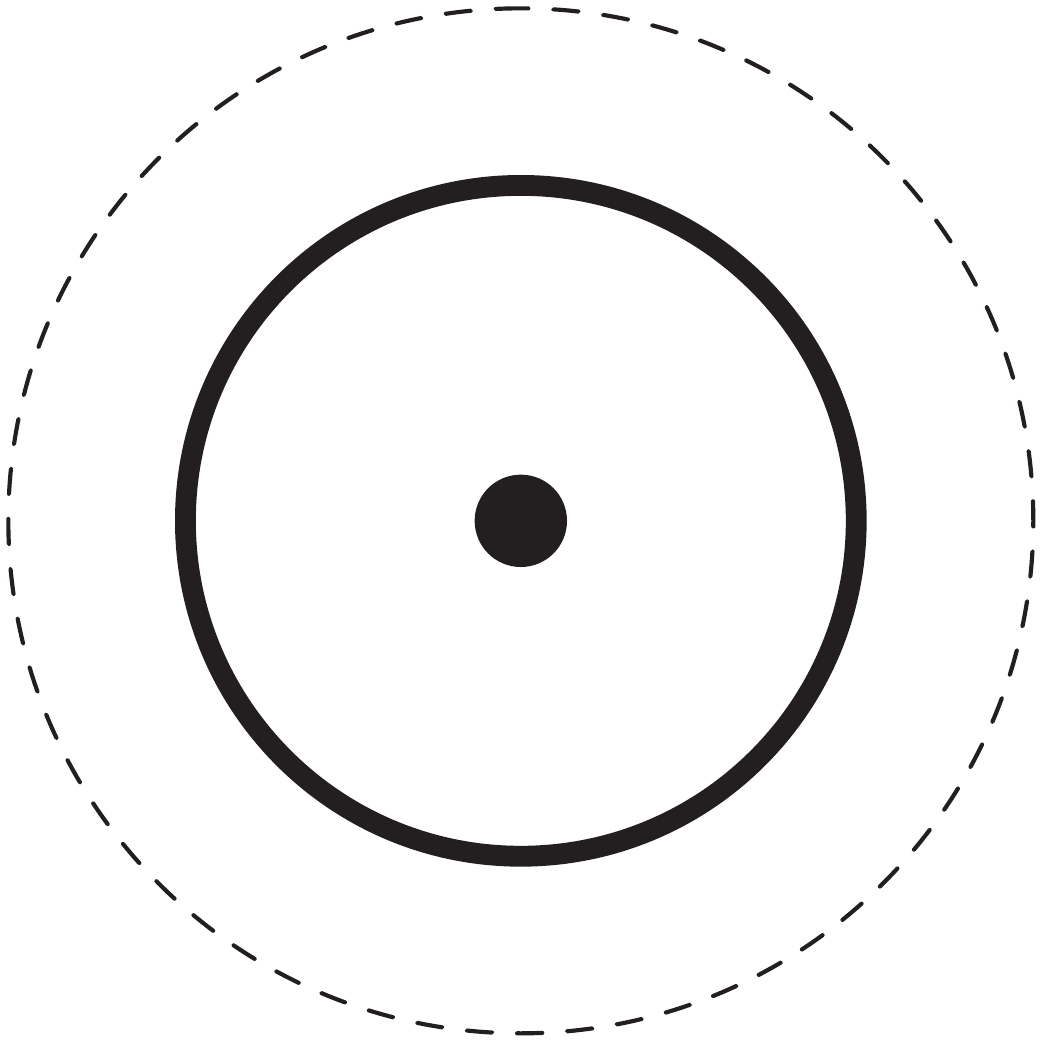} \end{minipage} 
-( A + A^{-1} ),\\ 
\end{align*}
 where the diagrams in the relations are assumed to be identical outside of the small balls depicted.  Multiplication of elements in $\S^{RY}(\Sigma)$ is the one induced by the stacking operation for framed links. 
\end{definition}

Henceforth, we will take $R=\Z[A^{\pm 1/2}]$, where $A$ is an indeterminate.

\begin{example}\label{ex:T2arc}
The square of an arc between punctures $i$ and $j$ is resolved as follows.
\begin{align*}
v_iv_j\begin{minipage}{.4in}\includegraphics[width=\textwidth]{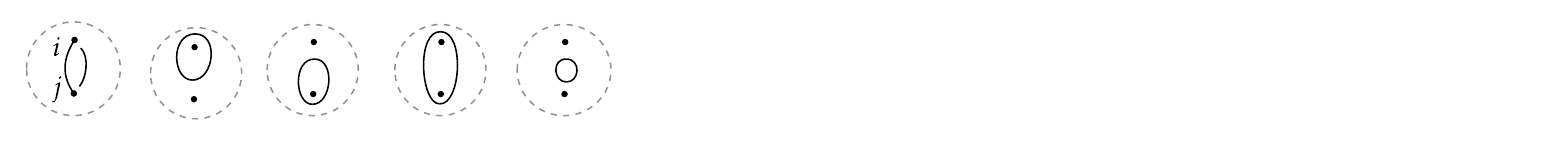} \end{minipage} 
=A\begin{minipage}{.4in}\includegraphics[width=\textwidth]{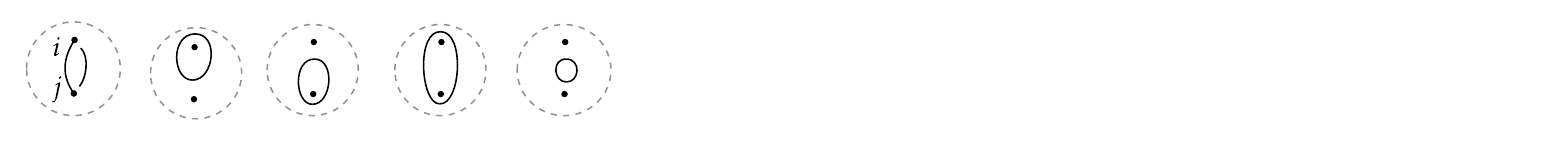} \end{minipage} 
+A^{-1}\begin{minipage}{.4in}\includegraphics[width=\textwidth]{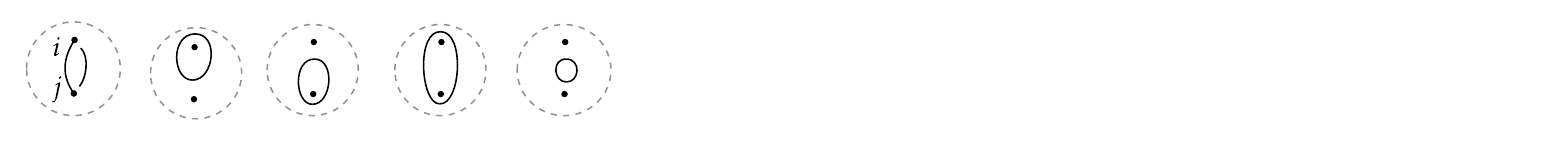} \end{minipage} 
+\begin{minipage}{.4in}\includegraphics[width=\textwidth]{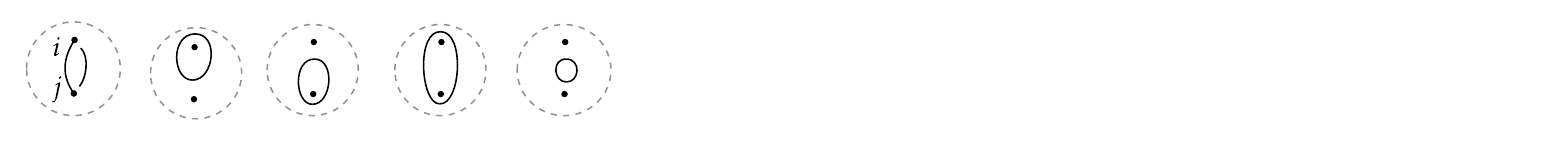} \end{minipage}
+\begin{minipage}{.4in}\includegraphics[width=\textwidth]{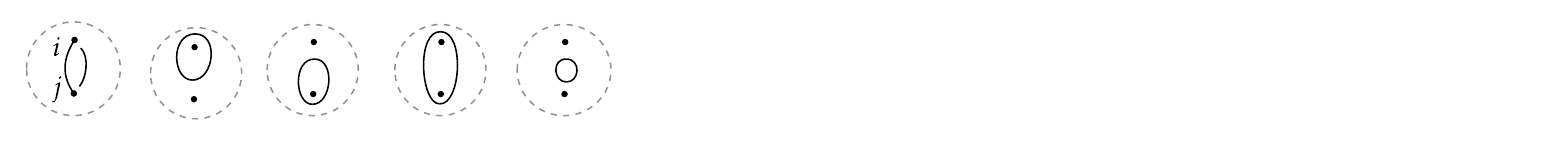} \end{minipage}  
=(A+A^{-1})^2-A^2-A^{-2}+\begin{minipage}{.4in}\includegraphics[width=\textwidth]{figures/both.pdf} \end{minipage} 
=2+\begin{minipage}{.4in}\includegraphics[width=\textwidth]{figures/both.pdf} \end{minipage} 
\end{align*}
\end{example}

Note that, in contrast, the usual Kauffman bracket skein algebra of $\S(\Sigma)$ does not include vertex classes or framed arcs.  More specifically, $\S(\Sigma)$ is the $R$-algebra freely generated by disjoint unions of framed knots in $\Sigma \times [0,1]$ modded out by relations 1) and 3) from Definition \ref{def:RYalgebra}.   

 We next state some results about the  structure of $\SRY(\Sigma)$ that will be used in the paper.

 We say that a framed link in $\Sigma \times [0,1]$ is  \emph{simple} if it admits a diagram that contains no crossings and does not contain any loop bounding a disk with one or no punctures.  The empty link, denoted $\emptyset$, is considered a simple link.
 
 \begin{proposition}[ \cite{RogerYang}]
 The simple framed links freely span $\SRY(\Sigma)$ as an $R$-module. 
 \end{proposition}

\begin{proposition}[ \cite{BKLQTrace} ]
$\SRY(\Sigma)$ is orderly finitely generated as an algebra, meaning that there exists skein elements $a_1,\dots ,a_k $ such that the set of all ordered monomials $\{ a_1^{n_1}\cdots  a_k^{n_k} \}_{n_i \in \Z_{\ge 0}}$ spans $\SRY(\Sigma)$.
\end{proposition} 

It is an open question whether there is an orderly generating set consisting of simple curves.  In particular, it is not known whether the orderly generating set from \cite{BKLQTrace} are simple curves.  On the other hand, there is a generating set from  \cite{BobbPeiferFinGen} consisting of simple curves, but  it is not known whether they form an orderly finitely generating set for most surfaces.  In Section \ref{sec:pres}, we will show that $\mathcal S^{RY}(\san)$ is orderly generated using simple curves.

\begin{proposition}[ \cite{MoonWongComp, BKLQTrace} ]
$\SRY(\Sigma)$ is an integral domain.
\end{proposition}

Let  $T_k$ be the Chebyshev polynomial of the first kind, defined recursively by $T_0(x) = 2$, $T_1(x) = x$, and $T_{k+1}(x) = x T_{k}(x) - T_{k-1}(x)$.

\begin{proposition}[\cite{KaruoMoonWongCenter}, following \cite{BonWonSkeinReps1, LeMoreRoots, KaruoPositive}] \label{prop:center}
If $A^{2N} = 1$ with $N$ odd, the center of the Roger-Yang skein algebra $Z(\mathcal S_{A}^{RY}(\Sigma))$ contains the following skeins: 
\begin{enumerate}
\item $T_{N}(\alpha)$, \; where $\alpha$ is a loop class  without self-intersection on its diagram
\item  $\frac{1}{\sqrt{v}\sqrt{w}}T_{N}(\sqrt{v}\sqrt{w}\beta)$, \; where $\beta$ is an arc class connecting two distinct interior punctures $v$ and $w$ and does not admit any self-intersection on its diagram 
\item  $\delta$ where $\delta$ is any curve parallel to a component of $\del \Sigma$
\end{enumerate}

Moreover, if $A$ is a primitive root of unity of odd order $N$, then the center $Z(\mathcal S_{A}^{RY}(\Sigma))$ is equal to the $\mathbb C[v_{i}^{\pm}]$-subalgebra generated by the above skeins. 
\end{proposition}

Finally, we introduce some notation.   
\begin{definition} \label{def:comm}
Given $\alpha, \beta \in \SRY(\Sigma)$, define $[\alpha, \beta] =  \alpha * \beta -  \beta * \alpha$ and $[\alpha, \beta]_A = A \alpha * \beta - A^{-1} \beta * \alpha$. 
\end{definition}

\bigskip

\section{Presentations of $\SRY(\Sigma_{0,2,2})$} \label{sec:pres}

The first presentation we provide involves one of the boundary loops as well as the four generators shown in Figure~\ref{fig:022-gens}.  
The relations from Theorem~\ref{thm:022pres} are chosen for ease of proof rather than brevity.   A more compact version will be presented later as Corollary~\ref{cor:real022pres}. 
\begin{figure}[htpb]
    \centering
    \includegraphics[width=\linewidth]{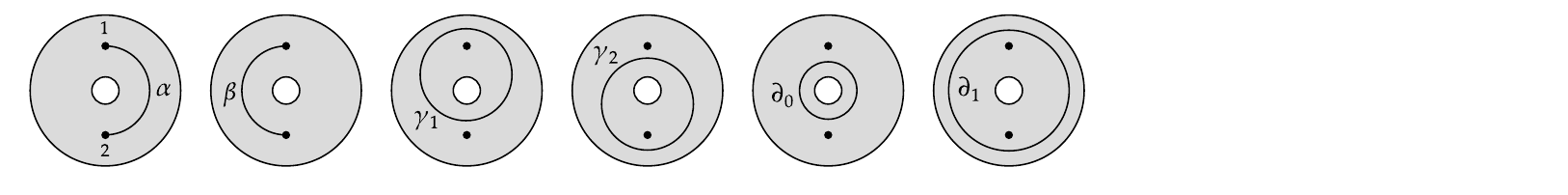}
    \caption{Generators for $\S^{RY}(\Sigma_{0,2,2})$}
    \label{fig:022-gens}
\end{figure}

 \begin{theorem}\label{thm:022pres}
    $\S^{RY}(\Sigma_{0,2,2})$ is the non-commutative algebra generated by $\del_0, \a,\b,\g_1\g_2$ (see Figure \ref{fig:022-gens}) over the commutative ring $R=\Z[A^{\pm1/2},v_i^{\pm1}]$, subject to the following relations.
    \begin{align}
  \b\a&=\a\b-v_1^{-1}v_2^{-1}(A-A^{-1})(\g_2-\g_1) \label{eqn:1} \\
    \g_1\a&=A^2\a\g_1-A(A^2-A^{-2})\b \label{eqn:2} \\
    \g_2\a&=A^{-2}\a\g_2+A^{-1}(A^2-A^{-2})\b. \label{eqn:3} \\
    \g_1\b&=A^{-2}\b\g_1+A^{-1}(A^2-A^{-2})\a. \label{eqn:4} \\
    \g_2\b&=A^2\b\g_2-A(A^2-A^{-2})\a. \label{eqn:5} \\
    \g_1\g_2&=A^{2}(v_1v_2\b^2-2)+(\del_0+\del_1)+(A+A^{-1})^2+A^{-2}(v_1v_2\a^2-2) \label{eqn:6} \\
    \g_2\g_1&=A^{-2}(v_1v_2\b^2-2)+(\del_0+\del_1)+(A+A^{-1})^2+A^{2}(v_1v_2\a^2-2)  \label{eqn:7}\\
  0&=   [\del_0, \del_1]= [\del_0, \a] =  [\del_0, \b] = [\del_0, \g_1]= [\del_0, \g_2]  \label{eqn:9}
  \end{align}
\end{theorem}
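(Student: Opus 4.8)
The plan is to prove the presentation in the two standard halves: \emph{soundness}, that each of \eqref{eqn:1}--\eqref{eqn:9} really holds in $\SRY(\san)$, and \emph{completeness}, that these relations already force the entire multiplicative structure. Let $\A$ be the abstract $R$-algebra with generators $\partial_0,\partial_1,\alpha,\beta,\gamma_1,\gamma_2$ and relations \eqref{eqn:1}--\eqref{eqn:9}. Soundness provides a well-defined algebra map $\Phi\colon\A\to\SRY(\san)$ sending each abstract generator to the corresponding diagram in Figure~\ref{fig:022-gens}; since those diagrams generate the skein algebra, $\Phi$ is automatically surjective, so the entire content of the theorem is the \emph{injectivity} of $\Phi$.

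For soundness I would verify each relation by resolving the relevant diagrams with the four defining relations of Definition~\ref{def:RYalgebra}. Relations \eqref{eqn:2}--\eqref{eqn:5} each record the two stacking orders of an arc $\gamma_i$ against a loop ($\alpha$ or $\beta$): one resolves the two crossings with the Kauffman relation~1) and removes the resulting contractible and once-punctured loops with the framing relation~3), and the asymmetry of the heights produces the $A^{\pm2}$ and $A(A^2-A^{-2})$ coefficients. Relation \eqref{eqn:1} is the loop commutator $\alpha\beta-\beta\alpha$; here the two resolutions differ near the punctures, and it is the \emph{puncture} skein relation~2) that converts the difference into the arc classes $\gamma_2-\gamma_1$, the factor $v_1^{-1}v_2^{-1}$ arising because relation~2) carries the punctures $v_i$ on the loop side. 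Relations \eqref{eqn:6}--\eqref{eqn:7} are the two stacking orders of the arc product $\gamma_1\gamma_2$, computed exactly as in Example~\ref{ex:T2arc}: resolving the endpoints at each puncture gives four diagrams, which relations~3) and~4) together with the evident isotopies rewrite in terms of $v_1v_2\alpha^2$, $v_1v_2\beta^2$, the boundary loops, and constants. Finally \eqref{eqn:9} just says the boundary-parallel loops are central, which is immediate since such a loop can be pushed off any diagram.

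For completeness I would use the theorem of Roger and Yang \cite{RogerYang} that the simple diagrams form a free $R$-basis of $\SRY(\san)$, and exhibit a monomial spanning set of $\A$ matching it. The claim to prove is that $\A$ is spanned over $R$ by the monomials $\partial_0^a\partial_1^b\alpha^m\beta^n\gamma_i^{\e}$ with $a,b,m,n\ge0$, $\e\in\{0,1\}$, and $i\in\{1,2\}$, that is, loops ordered before arcs with the arcs of total degree at most one. I would read \eqref{eqn:1}--\eqref{eqn:9} as rewriting rules: \eqref{eqn:9} pulls the central $\partial_0,\partial_1$ to the front; \eqref{eqn:2}--\eqref{eqn:5} push each arc to the right past every loop, creating only arc-free error terms; \eqref{eqn:1} sorts $\alpha$ before $\beta$ in the loop block; and \eqref{eqn:6}--\eqref{eqn:7} together with the arc-square of Example~\ref{ex:T2arc} collapse any two adjacent arcs into arc-free terms. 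A monomial to which no rule applies is then exactly one of the listed form, so termination of the rewriting yields the spanning statement. Injectivity of $\Phi$ then follows from a highest-term argument: under the filtration of $\SRY(\san)$ by geometric intersection number, the images $\Phi(\partial_0^a\partial_1^b\alpha^m\beta^n\gamma_i^{\e})$ have pairwise distinct leading simple diagrams, hence are linearly independent; combined with surjectivity and the spanning statement this forces the monomials to be a basis of $\A$ and $\Phi$ to be an isomorphism.

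The hard part will be the completeness half, and specifically the termination of the rewriting system, because the rules pull in opposite directions: sorting the loops via \eqref{eqn:1} \emph{creates} an arc ($\gamma_2-\gamma_1$), while pushing an arc past a loop via \eqref{eqn:2}--\eqref{eqn:5} \emph{destroys} arcs and creates loops. I would control this with a single well-founded order, total degree first (each loop and each arc counting once), broken by a lexicographic order that ranks arcs above loops and $\beta$ above $\alpha$, and check that every rule strictly decreases a monomial in it; the arc-creating terms of \eqref{eqn:1} and the loop-creating terms of \eqref{eqn:2}--\eqref{eqn:5} are all of strictly lower total degree, which is what makes the order well-founded. If one instead wants $\A$ to be \emph{free} on the normal forms without invoking the geometric basis, this becomes a Bergman diamond-lemma computation, and confluence of the overlaps $\gamma_i\gamma_j\gamma_k$ and of \eqref{eqn:1} against \eqref{eqn:2}--\eqref{eqn:5} is where the precise coefficients must conspire. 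The other delicate input is the highest-term computation for linear independence, which requires identifying the leading simple diagram of each $\alpha^m\beta^n\gamma_i^{\e}$ on the twice-punctured annulus.
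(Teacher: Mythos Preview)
There is a genuine gap, and it stems from a misreading of the generators: in $\SRY(\san)$ the curves $\alpha,\beta$ are \emph{arcs} between the two punctures and $\gamma_1,\gamma_2$ are \emph{loops}, not the other way around. You can see this from relation~\eqref{eqn:6}, where $v_1v_2\beta^2-2$ is exactly the loop produced from an arc square as in Example~\ref{ex:T2arc}, and from Proposition~\ref{prop:center}, where $\alpha,\beta$ receive the $\sqrt{v_1v_2}$ weighting reserved for arcs while $\gamma$ does not. With the roles corrected, your claimed normal form $\partial_0^a\partial_1^b\alpha^m\beta^n\gamma_i^{\varepsilon}$ with $\varepsilon\in\{0,1\}$ is simply wrong: no relation among \eqref{eqn:1}--\eqref{eqn:9} rewrites $\gamma_1^2$ (or any $\gamma_i^k$), so your rewriting system terminates at monomials you have excluded, and the spanning claim fails. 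You appeal to ``the arc-square of Example~\ref{ex:T2arc}'' to collapse two adjacent arcs, but that identity (i) is a skein-theoretic fact, not one of the relations defining $\mathcal A$, and (ii) applies to arcs, whereas the $\gamma_i$ are loops. The correct set of irreducibles, obtained in the paper via Bergman's Diamond Lemma with a carefully designed semigroup order, is $\{\alpha^{e_1}\beta^{e_2}\gamma_1^{e_3}\gamma_2^{e_4}\mid e_3e_4=0\}$: arbitrary arc powers, together with an arbitrary power of \emph{one} loop type.

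Even after fixing the normal forms, your linear-independence step diverges from the paper and would need substantial work. You propose a leading-term argument via an intersection-number filtration; making this rigorous requires identifying the top simple multicurve of each $\alpha^{e_1}\beta^{e_2}\gamma_i^{e}$ and showing these are pairwise distinct, which is not obvious once both arc and loop exponents are unbounded. The paper instead sidesteps this entirely: it specializes $A^{1/2}\mapsto 1$, $v_i\mapsto 1$, $\partial_0\mapsto 2$ to land in $\SRY(\Sigma_{0,1,3})/(A^{1/2}-1,v_i-1)\cong\Z[x,y,z]$, checks by an elementary argument that the images of the normal forms are $\Z$-independent polynomials, and then invokes a torsion-freeness lemma to lift independence back to $\overline R$.
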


\begin{proof}   It is a routine calculation to show that the relations (\ref{eqn:1})-(\ref{eqn:9}) hold in $\S^{RY}(\Sigma_{0,2,2})$. 

Define $\overline{R}=\Z[A^{\pm1/2},v_i^{\pm1},\del_0]$. Consider the alphabet $X=\{\a,\b ,\g_1,\g_2\}$, and let $\langle X\rangle$ be the $\overline{R}$-algebra of finite words in this alphabet. Let $I$ be the $\overline{R}$-ideal in $\langle X\rangle$ generated by relations \eqref{eqn:1}-\eqref{eqn:7}. 

We will apply Bergman's Diamond Lemma \cite{DiamondLemma} to obtain a basis for the algebra $\overline{R}\langle X\rangle/I$. There is a natural surjection $\overline{R}\langle X\rangle/I\onto \SRY(\san)$, which we upgrade to an isomorphism by showing it maps this basis to a basis for $\SRY(\san)$.

To use the Diamond Lemma, we construct a locally confluent terminating reduction system on $\langle X\rangle$. For definitions and more details about this approach, see \cite{DiamondLemma}. For details relevant specifically to skein algebras, see \cite[Section~9]{CookeAskeyWilson}. 

Define a reduction system $S$ on $\overline{R}\langle X\rangle $ by relations \eqref{eqn:1}-\eqref{eqn:7}. Notice all the reduction rules are pairwise, meaning the left hand side is always the product of two letters in $X$. This implies that, to show this system is locally confluent, it is sufficient to show all overlap ambiguities are resolvable.  Here, an overlap ambiguity is $x_1x_2x_3\in \langle X\rangle$ such that $x_1x_2$ and $x_2x_3$ both have reduction rules. It is resolvable if there exist sequences of reductions of $x_1x_2x_3$, beginning with the rules for $x_1x_2$ and $x_2x_3$ respectively that agree at their final expression. There are finitely many overlap ambiguities, so this can be checked by straightforward (if tedious) calculations.

To show this system is terminating, we construct a semigroup partial ordering $\succ$ on $\langle X\rangle$, as in \cite{CookeAskeyWilson}.
First, we establish some notation for $x_{i_1}\cdots x_{i_k}=m\in \langle X\rangle$. Let $|m|=k$, the \textit{length} of $m$. Let $\#(m)$ denote the number of times $\g_1$ or $\g_2$ appears in $m$. Order $X$ by $\a<\b<\g_1<\g_2$ and let the \textit{reduced degree} of $m$ be $|m|$ if there exist $1\leq h<j\leq k$ such that $x_{i_h}>x_{i_j}$, and 0 otherwise. This is a special case of the reduced degree used in \cite{CookeGenus2}. 

Let $m_1,m_2\in \langle X\rangle$ and declare $m_1\succ m_2$ if any of the following is satisfied:
\begin{enumerate}
        \item If $|m_1|>|m_2|$, then $m_1 \succ m_2$.
        \item If $|m_1|=|m_2|$ and the reduced degree of $m_1$ is greater than that of $m_1$, then $m_1\succ m_2$.
        \item If $|m_1|=|m_2|$ and $m_1,m_2$ have the same reduced degree and $\#(m_1)>\#(m_2)$, then $m_1 \succ m_2$.
\end{enumerate}

It is straightforward to check that this is a semigroup partial order and that it is \textit{compatible} with the reduction system defined above, in the sense that the monomials on the right side of equations \eqref{eqn:1}-\eqref{eqn:7} are less than those on the right.
Hence, by the Diamond Lemma, $\overline{R}\langle X\rangle /I$ has an $\overline{R}$-basis given by \textit{irreducible} monomials in $\langle X\rangle$, that is, monomials to which one cannot apply any reduction rules.

We now give an explicit description of this basis. Let $m=x_{i_1}\cdots x_{i_k}$ be an irreducible monomial and recall there is an order on $X$ given by $\a<\b<\g_1<\g_2$. For each pair $(x_1,x_2)$ with $x_1>x_2$ there is a reduction rule, so we must have $y_{i_j}\leq y_{i_{j+1}}$ for each $j$. Hence, $m=\a^{e_1}\b^{e_2}\g_1^{e_3}\g_2^{e_4}$ for some $e_i\in \Z_{\geq0}$. We can apply \eqref{eqn:6} to further reduce the monomial if and only if $e_3e_4\neq 0$. Thus, the set $B=\{\a^{e_1}\b^{e_2}\g_1^{e_3}\g_2^{e_4}\mid e_3e_4=0\}$ is an $\overline{R}$-basis for $\overline{R}\langle X\rangle /I$.

Given that the relations \eqref{eqn:1}-\eqref{eqn:9} are satisfied, there is a natural surjective algebra homomorphism 
\[
\psi:\overline{R}\langle X\rangle /I\onto \S^{RY}(\Sigma_{0,2,2}).
\]
To upgrade $\psi$ to an isomorphism, it remains to show $\psi(B)$ is $\overline{R}$-linearly independent in $\S^{RY}(\Sigma_{0,2,2})$. Our strategy is to quotient $\S^{RY}(\Sigma_{0,2,2})$ even further, and show the image of $\psi(B)$ under this quotient mapping is linearly independent. We use the following reformulation of \cite[Lemma~1.2]{BullockPrz}.

 \begin{lemma}\label{lem:lin-ind-ring}
    Let $S$ be a torsion-free algebra over a commutative, Noetherian integral domain $\mathcal{R}$. Let $J \subset \mathcal{R}$ be a nonzero finitely-generated ideal, let $\pi:S\to S/(J)$ be the natural projection, where $(J)$ is the ideal generated by $J1_S$ in $S$. If $B\subset S$ is finite and $\pi(B)\subset S/(J)$ is $\mathcal{R}/J$-linearly independent, then $B$ is $\mathcal{R}$-linearly independent.
\end{lemma}

In our case, $S$ is $\SRY(\san)$, which is torsion free by \cite[Theorem~2.4]{RogerYang} 
and $\mathcal{R}$ is $\overline{R}$, which is clearly Noetherian. The map $\pi$ is given by the following chain of maps. 
\[
    \S^{RY}(\Sigma_{0,2,2})\to \S^{RY}(\Sigma_{0,1,3})\onto \Z[x,y,z]
\]
The first map is the algebra homomorphism induced by the inclusion $\Sigma_{0,2,2}\into \Sigma_{0,1,3}$. The underlying surfaces are homeomorphic, so the kernel of the induced homomorphism is $(\del_0-A-A^{-1})$. 
By \cite{MoonPuncSph}, $ \S^{RY}(\Sigma_{0,1,3})/(A^{1/2}-1,v_i-1)\cong \Z[x,y,z]$, so the second map is the natural projection for this quotient. 

Consider the ideal $(A^{1/2}-1,\del_1-2,v_i-1)$ of $\overline{R}$ and observe that $\pi$ is the natural projection 
\[
\SRY(\san)\onto \SRY(\san)/(A^{1/2}-1,\del_0-2,v_i-1)\cong \Z[x,y,z]
\]
Hence, by Lemma \ref{lem:lin-ind-ring}, $B$ is $\overline{R}$-linearly independent in $\SRY(\san)$ if and only if $\pi(B)$ is $\Z$-linearly independent in $\Z[x,y,z]$. 
We calculate
\begin{align*}
        \pi(\a)=x,\qquad\quad
        \pi(\b)=yz-x,\qquad\quad
        \pi(\g_1)=y^2-2,\qquad\quad
        \pi(\g_2)=z^2-2.
\end{align*}
so $\pi(B)=\{ x^{e_1}(yz-x)^{e_2}(y^2-2)^{e_3}(z^2-2)^{e_4}\mid e_3e_4=0 \}.$
One can show $\pi(B)$ is $\Z$-linearly independent in $\Z[x,y,z]$ as follows. Suppose
\begin{equation}
        0=\sum_{a,b,c,d}\a_{a,b,c,d} \, x^a(yz-x)^b(y^2-2)^c(z^2-2)^d
    \label{eq:initlincombo}
    \end{equation}
where $\a_{a,b,c,d}\in \Z$ and the subscript runs some finite subset of $\Z_{\geq0}^4$. Evaluate at $x=0$, then show the summands that do not vanish are linearly independent by considering the usual $\Z_{\geq0}^3$-grading on $\Z[x,y,z]$. Factor out a power of $x$ from the initial expression, and apply induction on $e_1$.
\end{proof}

\begin{corollary} \label{cor:orderly}
$\mathcal S^{RY}(\san)$ is orderly finitely generated by simple curves.  
\end{corollary}

\begin{proof} 
It is shown in the proof of Theorem \ref{thm:022pres} that  $B=\{\a^{e_1}\b^{e_2}\g_1^{e_3}\g_2^{e_4}\mid e_3e_4=0\}$ is a basis for $\mathcal S^{RY}(\san)$.  So the simple curves $\a, \b, \g_1, \g_2$ form an orderly finitely generating set.  
\end{proof}

We end this section by providing a simpler presentation for $\S^{RY}(\Sigma_{0,2,2})$.   We use the notation from  Definition~\ref{def:comm}:  $[\alpha, \beta]_A = A \alpha * \beta - A^{-1} \beta * \alpha$.

\begin{corollary}\label{cor:real022pres}
    $\S^{RY}(\Sigma_{0,2,2})$ is the non-commutative $\Z[A^{\pm 1/2},v_i^{\pm1},\del_0,\del_1]$-algebra generated by $\a,\b,\g_1$ subject to the following relations. 
    \begin{align}
        v_1v_2[\b,\a]_A&=(A^{2}-A^{-2})\g_1+(A-A^{-1})(\del_0+\del_1)\label{eqn:tor1}\\
        [\a,\g_1]_A&=(A^2-A^{-2})\b\label{eqn:tor2}\\
        [\g_1,\b]_A&=(A^2-A^{-2})\a\label{eqn:tor3}\\
        v_1v_2A\b\a\g&=v_1v_2A^2\b^2+v_1v_2A^{-2}\a^2+A^2\g^2+A\g(\del_0+\del_1)+\del_0\del_1-(A-A^{-1})^2\label{eqn:tor4}
    \end{align}
\end{corollary}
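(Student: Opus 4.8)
The plan is to derive Corollary~\ref{cor:real022pres} from Theorem~\ref{thm:022pres} by a change of generators (a Tietze transformation) rather than by running the Diamond Lemma from scratch. Both presentations share $\a,\b,\g_1$; the only differences are that the corollary promotes the second boundary loop $\del_1$ into the coefficient ring and drops the generator $\g_2$. So the entire task is to trade $\g_2$ for the central element $\del_1$ and to check that relations \eqref{eqn:1}--\eqref{eqn:7} become \eqref{eqn:tor1}--\eqref{eqn:tor4} under this trade.

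First I would dispose of the easy part: expanding the bracket of Definition~\ref{def:comm}, relation \eqref{eqn:tor2} is exactly \eqref{eqn:2} and \eqref{eqn:tor3} is exactly \eqref{eqn:4}, after multiplying through by a power of $A$ and rearranging; these involve only $\a,\b,\g_1$ and carry over verbatim. The substance is the elimination of $\g_2$. The relations mentioning $\g_2$ are \eqref{eqn:1}, \eqref{eqn:3}, \eqref{eqn:5}, \eqref{eqn:6}, \eqref{eqn:7}, and \eqref{eqn:1} is linear in $\g_2$; I would use it, together with the linear part of \eqref{eqn:6}--\eqref{eqn:7}, to express $\g_2$ as a polynomial in $\a,\b,\g_1,\del_0,\del_1$ and, inversely, to recover $\del_1$ from $\a,\b,\g_1,\g_2,\del_0$. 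These two expressions are precisely the content of \eqref{eqn:tor1}. I would package this as a pair of mutually inverse homomorphisms — one from the corollary's algebra to $\SRY(\san)$ fixing $\a,\b,\g_1$ and sending $\del_1$ to its value, the other sending $\g_2$ to the polynomial expression — and verify the composites are the identity on generators.

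With $\g_2$ eliminated, I would substitute its expression into the remaining relations. Relation \eqref{eqn:1} becomes \eqref{eqn:tor1}; the two quadratic relations \eqref{eqn:6} and \eqref{eqn:7}, which govern $\g_1\g_2$ and $\g_2\g_1$, must between them collapse to the single cubic \eqref{eqn:tor4} after re-ordering monomials with \eqref{eqn:tor2}, \eqref{eqn:tor3}; and the substituted forms of \eqref{eqn:3} and \eqref{eqn:5} must turn out to be consequences of \eqref{eqn:tor1}--\eqref{eqn:tor4}, so that they may be dropped. To certify that nothing is lost, I would either exhibit the Diamond-Lemma normal form $\a^{e_1}\b^{e_2}\g_1^{e_3}$ for the corollary's presentation and match it against the basis $B$ of Theorem~\ref{thm:022pres} under the trade $\g_2\leftrightarrow\del_1$, or re-use the quotient $\SRY(\san)\onto\Z[x,y,z]$ employed in the theorem to check linear independence of the normal-form monomials.

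I expect the main obstacle to be the derivation of the cubic \eqref{eqn:tor4}, and in particular the appearance of the product $\del_0\del_1$: since \eqref{eqn:6} and \eqref{eqn:7} are each only linear in $\del_0+\del_1$, the product term can only emerge from combining the two relations, so the bookkeeping of how \eqref{eqn:6} and \eqref{eqn:7} interact — rather than either one alone — is where the care is needed. A secondary but genuinely fiddly point is keeping the coefficient ring straight through the trade, since $\del_1$ is a coefficient in the corollary but an algebra element governed by \eqref{eqn:6}--\eqref{eqn:7} in the theorem; I would resolve this by checking directly that the element playing the role of $\del_1$ is central, which reduces to the commutation relations \eqref{eqn:1}--\eqref{eqn:5}.
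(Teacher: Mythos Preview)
Your approach is correct and is essentially the same Tietze-transformation argument the paper uses: verify that \eqref{eqn:tor1}--\eqref{eqn:tor4} hold in $\SRY(\san)$, and then show that these relations (with $\g_2$ defined by the formula $\g_2=A^{-1}(v_1v_2\a\b-A^{-1}\g_1-\del_0-\del_1)$, which is the expression you would extract from \eqref{eqn:1} and \eqref{eqn:6}) imply \eqref{eqn:1}--\eqref{eqn:9}, giving mutually inverse homomorphisms. The paper's write-up is terser---it simply says ``direct computation'' for both directions---and in particular your proposed fallback of re-running the Diamond Lemma or the quotient to $\Z[x,y,z]$ is unnecessary once you have the two inverse maps on generators; the isomorphism is already established at that point.
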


\begin{proof}
    One can check by hand that these relations are satisfied. It remains to show that they imply the relations in Theorem \ref{thm:022pres}. This can be done by direct computation. It is useful to note that $\g_2=A^{-1}(v_1v_2\a\b-A^{-1}\g_1-\del_0-\del_1)$ and that $\del_1$ is central.
\end{proof}

We will not use the presentation from Theorem \ref{thm:022pres} again. Therefore, from now on, to simplify notation, we will denote $\gamma_1$ by $\gamma$. 

\section{Relationships between skein algebras of small surfaces} \label{sec:relationships}

Let  $\mathcal{S}(\Sigma_{1,0,0})$ denote the usual Kauffman bracket skein algebra of the closed torus.

\begin{theorem}\label{thm:hom-to-torus}
    There exists a surjective algebra homomorphism
    \[
    \phi:\S^{RY}(\Sigma_{0,2,2})\to  \mathcal{S}(\Sigma_{1,0,0})\otimes \Z[A^{\pm 1/2}]
    \]
such that the kernel of $\phi$ is generated over a subset of the center of $\SRY(\san)$.
\end{theorem}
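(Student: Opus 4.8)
The plan is to build $\phi$ on generators and then verify it respects the defining relations of Corollary~\ref{cor:real022pres}, which is the most economical presentation available. I would first recall the standard presentation of $\mathcal S(\Sigma_{1,0,0})$: it is generated by three simple closed curves $X,Y,Z$ (the $(1,0)$, $(0,1)$, and $(1,1)$ curves) subject to product-to-sum relations of the form $A XY - A^{-1}YX = (A^2-A^{-2})Z$ and cyclic permutations, together with the cubic relation expressing $XYZ$ in terms of $X^2,Y^2,Z^2$ and the Casimir. The shape of relations \eqref{eqn:tor2} and \eqref{eqn:tor3}, namely $[\a,\g]_A=(A^2-A^{-2})\b$ and $[\g,\b]_A=(A^2-A^{-2})\a$, already matches the torus product-to-sum relations exactly, so the natural guess is to send $\a\mapsto X$, $\b\mapsto Y$, $\g\mapsto Z$ (up to some invertible scalars), and to send the boundary loops $\del_0,\del_1$ to scalar multiples of $A+A^{-1}$ so that the extra terms collapse. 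The puncture weights $v_1,v_2$ must be absorbed: since \eqref{eqn:tor1} carries a factor $v_1v_2$, I expect $\phi$ to rescale $\a,\b$ by $A^{\pm}$-powers and by $\sqrt{v_1 v_2}$-type factors (legitimate because $R$ contains $v_i^{\pm 1/2}$ after adjoining square roots, or by choosing the image so that $v_1 v_2$ maps to $1$).

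Concretely, I would first \emph{specialize the central boundary data}: define $\phi(\del_0)=\phi(\del_1)=-(A+A^{-1})$ (or whatever sign makes the inhomogeneous terms in \eqref{eqn:tor1} vanish), and choose images $\phi(\a),\phi(\b),\phi(\g)$ proportional to $X,Y,Z$ with scalars tuned so that \eqref{eqn:tor1}--\eqref{eqn:tor3} become precisely the three torus product-to-sum relations. The delicate check is \eqref{eqn:tor4}: after the substitution the $v_1v_2$ factors and the $\del_0\del_1$, $(\del_0+\del_1)$, and $(A-A^{-1})^2$ terms must reassemble into the torus cubic Casimir relation. I would verify this by direct substitution, treating it as the one genuinely nontrivial compatibility. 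Surjectivity is then immediate once the images of $\a,\b,\g$ hit (scalar multiples of) $X,Y,Z$, since those generate $\mathcal S(\Sigma_{1,0,0})$.

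For the final claim about the kernel, the strategy is structural rather than computational. Having both presentations, I would identify explicit central elements of $\SRY(\san)$ lying in $\ker\phi$ — the natural candidates are $\del_0-(A+A^{-1})$ and $\del_1-(A+A^{-1})$ (or the analogous combinations forced by the specialization above), which are central by \eqref{eqn:9} and which $\phi$ kills by construction. I would then argue that these generate the whole kernel: since $\phi$ is obtained from the presentation by imposing exactly these central specializations, the map factors as $\SRY(\san)\onto \SRY(\san)/\mathcal K \xrightarrow{\sim} \mathcal S(\Sigma_{1,0,0})\otimes\Z[A^{\pm1/2}]$, where $\mathcal K$ is the two-sided ideal generated by the chosen central elements. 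Establishing this isomorphism is where the work concentrates: I would use Corollary~\ref{cor:real022pres} to give a presentation of the quotient $\SRY(\san)/\mathcal K$ and match it generator-for-generator and relation-for-relation against the known presentation of the torus skein algebra, so that the induced surjection has trivial kernel. Because the generators are central, "generated over a subset of the center" is then exactly the assertion that $\mathcal K$ is generated by central elements.

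\textbf{Main obstacle.} The single hard step is confirming that the cubic relation \eqref{eqn:tor4}, with all its $v_1v_2$, boundary, and constant terms, maps precisely onto the torus Casimir cubic under the chosen scalar normalization; getting the powers of $A$ and the placement of the $\sqrt{v_1v_2}$ factors consistent across all four relations simultaneously is what pins down the exact formula for $\phi$, and a wrong normalization in \eqref{eqn:tor1}--\eqref{eqn:tor3} will fail to reconcile with \eqref{eqn:tor4}.
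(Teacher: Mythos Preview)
Your overall plan matches the paper's proof closely: define $\phi$ on generators using the presentation of Corollary~\ref{cor:real022pres}, verify the four relations against the Bullock--Przytycki presentation of $\mathcal S(\Sigma_{1,0,0})$, and read off surjectivity. Two small corrections on the specialization: the paper sets $\phi(v_i)=1$ and $\phi(\del_0)=-\phi(\del_1)=A+A^{-1}$, i.e.\ \emph{opposite} signs so that $\del_0+\del_1\mapsto 0$ kills the inhomogeneous term in \eqref{eqn:tor1}; your guess $\phi(\del_0)=\phi(\del_1)$ would not do this. With these values, $\del_0\del_1-(A-A^{-1})^2\mapsto -(A+A^{-1})^2-(A-A^{-1})^2=-2(A^2+A^{-2})$ and \eqref{eqn:tor4} lands exactly on the torus cubic relation, so no rescaling of $\a,\b,\g$ is needed. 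Also, the kernel generators must include $v_i-1$, not just the boundary specializations.

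For the kernel claim, your presentation-matching argument (quotient by the central ideal $\mathcal K$, then compare the resulting presentation with the torus presentation) is a valid route and would work given Corollary~\ref{cor:real022pres} and the Bullock--Przytycki result. The paper instead argues via bases: it shows (using the Diamond Lemma on \eqref{eqn:tor1}--\eqref{eqn:tor3}) that $\{\b^{e_1}\a^{e_2}\g^{e_3}\mid e_1e_2e_3=0\}$ is an $\overline R$-basis of $\SRY(\san)$, observes that $\phi$ carries this to the known $\Z[A^{\pm1/2}]$-basis of $\mathcal S(\Sigma_{1,0,0})$ from \cite{BullockPrz}, and concludes that $\phi$ is a base change along a ring map $\overline R\to\Z[A^{\pm1/2}]$, so its kernel is generated by an ideal of $\overline R\cdot\emptyset\subset Z(\SRY(\san))$. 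The two arguments are equivalent in strength; the paper's version avoids re-deriving a presentation of the quotient but requires establishing the $\overline R$-basis, while yours is more direct once both presentations are in hand.
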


\begin{proof}

Recall the presentation of the closed torus from Theorem 2.1 of  \cite{BullockPrz}:     $\mathcal{S}(\Sigma_{1,0,0})$ is isomorphic to the noncommutative algebra generated by $x_1,x_2,x_3$ over $\Z[A^{\pm 1}]$ subject to the following relations using the commutator from Definition~\ref{def:comm}.
    \begin{align*}
        [x_1,x_2]_A&=(A^2-A^{-2})x_3\\
        [x_3,x_1]_A&=(A^2-A^{-2})x_2\\
        [x_2,x_3]_A&=(A^2-A^{-2})x_1\\
        Ax_1x_2x_3&=A^2x_1^2+A^{-2}x_2^2+A^2x_3^2-2(A^2+A^{-2})
    \end{align*}

    Let $\phi(v_i)=1$, $\phi(\del_0)=-\phi(\del_1)=A+A^{-1}$, $\phi(\b)=x_1, \phi(\a)=x_2,$ and $\phi(\g)=x_3$, and extend $\phi$ linearly and over products. Using Corollary \ref{cor:real022pres}, one checks that $\phi$ is an algebra homomorphism. Surjectivity is immediate.

From the definition, it is easy to see that the kernel contains $v_i-1,\del_0-A-A^{-1},\del_1+A+A^{-1}$.   One can show using the Diamond Lemma (using \eqref{eqn:tor1}-\eqref{eqn:tor3} as reduction rules) and induction that 
\[
\{\b^{e_1}\a^{e_2}\g^{e_3}\mid e_1e_2e_3=0\}
\]
is an $\overline{R}$-basis for $\SRY(\san)$. The image of this basis under $\phi$ is the basis for $\S(\sto)$ constructed in \cite[Thm.~2.1]{BullockPrz}. Hence, $\phi$ is an extension of a ring homomorphism $\overline{R}\to \Z[A^{\pm1/2}]$, so its kernel is the ideal generated in $\SRY(\san)$ by some ideal of $\overline{R}\cdot \emptyset$ (recall $\emptyset$ denotes the empty link), and $\overline{R}\cdot \emptyset$ is the center of $\SRY(\san)$.
\end{proof}

\begin{remark}

Note that $\SRY(\san)/(v_1-1)\cong \mathcal{S}(\sto)\otimes \Z[A^{\pm 1/2}]$, so the above statement could be rephrased as a map between Roger-Yang skein algebras.  

\end{remark}

The homomorphism $\phi$ is not the only instance of a relationship between the Roger-Yang skein algebra and the usual skein algebra observed by the authors. For example, the skein algebras of the thrice punctured disk and the torus with one boundary component are nearly isomorphic, as follows.

\begin{proposition}
    There is a surjective homomorphism  $\SRY(\Sigma_{0,1,3})\to \S(\Sigma_{1,1,0})$ given by identification of generators and sending $A^{1/2}\mapsto A$ and $v_i\mapsto1$. This induces an isomorphism, $ \S(\Sigma_{1,1,0})\cong \SRY(\Sigma_{0,1,3})/(v_i-1)$.
\end{proposition}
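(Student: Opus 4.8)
The plan is to mirror the proof of Theorem~\ref{thm:hom-to-torus}, working entirely at the level of finite presentations. The source $\SRY(\Sigma_{0,1,3})$ has a finite presentation established in \cite{MoonPuncSph}, whose generators are the three arcs joining the pairs of punctures (together with the central boundary loop), and the target $\S(\Sigma_{1,1,0})$ has the classical presentation of \cite{BullockPrz}, generated by three simple closed curves $x_1,x_2,x_3$ subject to $A$-commutator relations of the form $[x_i,x_j]_A=(A^2-A^{-2})x_k$ together with a cubic relation recording the boundary. The candidate map $\phi$ identifies the three arc generators with $x_1,x_2,x_3$, sends each $v_i\mapsto 1$, and sends $A\mapsto A^2$ (equivalently $A^{1/2}\mapsto A$).

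First I would verify that $\phi$ is a well-defined algebra homomorphism by checking that each defining relation of $\SRY(\Sigma_{0,1,3})$ maps to a consequence of the \cite{BullockPrz} relations. The conceptual point---and the reason the substitution $A\mapsto A^2$ is forced---is already visible in Definition~\ref{def:RYalgebra}: the Roger--Yang arc relations carry half-integer powers $A^{\pm 1/2}$ and puncture weights $v_i$, whereas the Kauffman bracket loop relations carry integer powers $A^{\pm 1}$ and no weights. Setting $v_i=1$ and doubling $A$ converts the former coefficients into the latter, so that the puncture arc-resolution relation becomes the ordinary skein relation and the commutator relations match term by term. Because $A^{1/2}\mapsto A$, no half-powers survive in the image and $\phi$ lands in $\S(\Sigma_{1,1,0})$ directly, with no tensor factor needed (in contrast to Theorem~\ref{thm:hom-to-torus}). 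The step I expect to be the main obstacle is reconciling the constant terms: the trivial-loop value $-A^2-A^{-2}$ and the puncture-removal value $A+A^{-1}$ from Definition~\ref{def:RYalgebra} must, after $A\mapsto A^2$, combine correctly to reproduce the constant appearing in the cubic relation of \cite{BullockPrz}. This is a finite but delicate coefficient comparison, and it is where one must be careful about signs and about the precise normalization of the boundary curve in the target presentation.

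Surjectivity is then immediate, since $x_1,x_2,x_3$ lie in the image. To compute the kernel, I would follow the endgame of Theorem~\ref{thm:hom-to-torus}: exhibit an explicit monomial basis of $\SRY(\Sigma_{0,1,3})$---either by a Diamond Lemma argument using the commutator relations as reduction rules, or by quoting the basis from \cite{MoonPuncSph}---and check that $\phi$ carries it bijectively onto the standard basis of $\S(\Sigma_{1,1,0})$ from \cite{BullockPrz}. This identifies $\phi$ as the base change along the ring homomorphism $\Z[A^{\pm 1/2},v_1^{\pm1},v_2^{\pm1},v_3^{\pm1}]\to\Z[A^{\pm 1}]$ sending $A^{1/2}\mapsto A$ and $v_i\mapsto 1$. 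Since $A^{1/2}\mapsto A$ is an isomorphism on the $A$-part and contributes no kernel, the kernel of this ring map is exactly $(v_1-1,v_2-1,v_3-1)$. Therefore $\ker\phi=(v_i-1)$, giving the stated isomorphism $\S(\Sigma_{1,1,0})\cong\SRY(\Sigma_{0,1,3})/(v_i-1)$.
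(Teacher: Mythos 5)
Your approach---matching the finite presentation of $\SRY(\Sigma_{0,1,3})$ from \cite{MoonPuncSph} against the Bullock--Przytycki presentation of $\S(\Sigma_{1,1,0})$ under the substitution $A^{1/2}\mapsto A$, $v_i\mapsto 1$---is exactly the paper's proof, and your base-change argument for the kernel is sound. However, you misquote both presentations, and the misquote shapes (wrongly) where you expect the difficulty to lie. By \cite[Theorem~2.1]{BullockPrz}, $\S(\Sigma_{1,1,0})$ is presented by $x_1,x_2,x_3$ subject \emph{only} to the three commutator relations $[x_i,x_{i+1}]_A=(A^2-A^{-2})x_{i+2}$ (indices mod 3); the cubic relation you describe is the additional relation satisfied in the \emph{closed} torus $\S(\Sigma_{1,0,0})$, which is what Theorem~\ref{thm:hom-to-torus} uses, not this proposition. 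Likewise, \cite[Theorem~1.1]{MoonPuncSph} presents $\SRY(\Sigma_{0,1,3})$ by the three arcs subject only to $v_i[x_i,x_{i+1}]_{A^{1/2}}=(A-A^{-1})x_{i+2}$; the boundary loop is not a separate generator (it lies in the subalgebra generated by the arcs). Consequently the ``main obstacle'' you anticipate---reconciling the constants $-A^2-A^{-2}$ and $A+A^{-1}$ with the constant term of a cubic relation---is vacuous: there is no cubic relation on either side, the three relations correspond term-by-term under the substitution, and well-definedness is immediate, which is why the paper can dispatch the proof in one line. Note that getting the presentations right is not merely cosmetic here: your kernel computation hinges on $\phi$ carrying a monomial basis $x_1^{a}x_2^{b}x_3^{c}$ of the source bijectively onto a basis of the target, and this holds precisely because the target has \emph{no} cubic relation---if the relation you posited were actually present, those monomials would be linearly dependent in $\S(\Sigma_{1,1,0})$ and the kernel would be strictly larger than $(v_i-1)$. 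With the presentations corrected, the rest of your argument goes through as written.
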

\begin{proof}
	In \cite[Theorem~1.1]{MoonPuncSph}, the authors prove that $\SRY(\Sigma_{0,1,3})$ is the $\Z[A^{\pm1/2},v_i^{\pm1}]$-algebra generated by $x_1,x_2,x_3$ subject to the relation $v_i[x_i,x_{i+1}]_{A^{1/2}}=(A-A^{-1})x_{i+2}$ with subscripts taken mod 3. In \cite[Theorem~2.1]{BullockPrz}, the authors show $\S(\Sigma_{1,1,0})$ is the $\Z[A^{\pm1}]$-algebra generated by $x_1,x_2,x_3$ subject to the relation $ [x_i,x_{i+1}]_{A}=(A^2-A^{-2})x_{i+2}$ with subscripts taken mod 3. From this, it is clear that the map described in the statement is a homomorphism with kernel $v_i-1$.
\end{proof}

\begin{remark}
From this proposition, we easily obtain some algebraic information about $\SRY(\Sigma_{0,1,3})/(v_1-1)$. Composing finite-dimensional, irreducible representations of $\S(\Sigma_{1,1,0})$ with this homomorphism yields such representations for $\SRY(\Sigma_{0,1,3})$. Furthermore, note that $\SRY(\Sigma_{0,1,3})/(v_1-1)$ is the specialization of the Roger-Yang skein algebra considered in \cite{KaruoPositive}. As $\S(\Sigma_{1,1,0})$ has a positive basis by \cite{BousseauPositive,MandelQinPositive}, this isomorphism implies $\SRY(\Sigma_{0,1,3})/(v_i-1)$ has a positive basis. It would be interesting to see if this basis agrees with the bracelets basis proposed in \cite{KaruoPositive}.
\end{remark}

\section{Representation theory of $\S^{RY}(\Sigma_{0,2,2})$}  \label{sec:reps}

In this section, we consider $\SRY(\Sigma)$ as a $\C$-algebra instead of a $\Z$-algebra.

\subsection{  $\S^{RY}(\Sigma_{0,2,2})$ is almost Azumaya}

We  refer the interested reader to \cite{BrownGoodearl, FKBLUnicity} for definitions and details about almost Azumaya algebras.

 For $\mathcal S^{RY}(\san)$, the following proof was suggested by Thang Lê (see Remark 5.6 in \cite{KaruoMoonWongCenter}).  





\begin{theorem}
When $A^{2N}=1$, $\mathcal S^{RY}(\san)$ is almost Azumaya.  
\end{theorem} 
\begin{proof}
To show that an algebra is almost Azumaya, we use Theorem 3.6 from \cite{FKBLUnicity}, which shows an algebra is almost Azumaya if it satisifes three conditions:  it is finitely generated as a $\C$-algebra,  it is prime, and  it is finitely generated over its center.   

For $\mathcal S^{RY}(\san)$, the first two conditions follow from \cite{BobbPeiferFinGen, MoonWongComp, BKLQTrace}.   
To prove that $\mathcal S^{RY}(\san)$ is finitely generated over its center, recall that Corollary \ref{cor:orderly} shows $\mathcal S^{RY}(\san)$ is orderly finitely generated by simple loops and arcs $B=\{\a^{e_1}\b^{e_2}\g_1^{e_3}\g_2^{e_4}\mid e_3e_4=0\}$.  Thus application of the Chebyshev polynomial from Proposition \ref{prop:center} produces central elements $T_N(\alpha)$, $T_N(\beta)$, $T_N(\sqrt{v_1v_2} \gamma_1)$, and $ T_N(\sqrt{v_1v_2} \gamma_2)$.   Note that the vertex classes $v_1$ and $v_2$ are scalars and hence commutative.  We can thus decrease the degree on $\a, \b, \g_1$ and $\g_2$ until it is equal to or less than $N$,  the degree of $T_N$.  Thus $\mathcal S^{RY}(\san)$ is finite dimensional over its center,  generated by $\{ \a^{e_1}\b^{e_2}\g_1^{e_3}\g_2^{e_4}\mid   0 \le e_i < N, \; e_3e_4=0\}$ as a $Z(\mathcal S^{RY}(\san))$-module. 

 \end{proof}

We thus have implications for the representation theory for $\mathcal S^{RY}(\san)$, analogous to the Unicity Theorem for the usual Kauffman bracket skein algebra.  For an algebra $\mathcal{A}$, let   $\mathrm{Irrep}(\mathcal{A})$ be the set of all equivalence classes of finite-dimensional irreducible representation of $\mathcal{A}$ and let $\mathrm{MaxSpec}(Z(\mathcal{A}))$ be the set of all maximal ideals of its center $Z(\mathcal{A})$.   For every finite-dimensional irreducible representation $\rho$ of $\mathcal{A}$, one may define its associated central character by using Schur's Lemma to associate a scalar to each element of the center $Z(A)$.  Up to isomorphism, the central character is in bijection with the maximal ideals of $Z(\mathcal{A})$.  Thus, there is a well-defined map  $\chi: \mathrm{Irrep}(\mathcal{A}) \to  \mathrm{MaxSpec}\;(\mathcal{A})$. 


\begin{corollary} 
When $\mathcal{A} = \mathcal S^{RY}(\san)$, and $D$ is the dimension of  $\mathcal{A}$ as a $Z(\mathcal{A})$-module,  every irreducible representation of  $\mathcal S^{RY}(\san)$ has dimension at most $\sqrt{D}$, and   $\chi: \mathrm{Irrep}(\mathcal{A})  \to  \mathrm{MaxSpec}\; Z(\mathcal{A})$ is surjective.  Moreover, there exists a Zariski  open dense subset $U \subset \mathrm{MaxSpec}\; Z(\mathcal{A})$ on which  $\chi^{-1}$ is injective, and the dimension of any irreducible representation in $\chi^{-1}(U)$ is equal to  $\sqrt{D}$. 
\end{corollary}


In the next section, we provide explicit computations on the Zariski open dense subset $U$ and show that the associated representations have dimension $\sqrt{D} = N$ when $A^{2N}=1$ and $N$ is odd.

\subsection{Explicit representations of $\S^{RY}(\Sigma_{0,2,2})$}


Throughout this section, let $A$ be a primitive $2N$th root of unity with $N$ odd. Let $\rho$ be an irreducible representation of $\SRY(\san)$ over a $\C$-vector space $V$. By Schur's Lemma, we can associate a scalar to each element of the center $Z(\SRY(\san) )$. Let $\chi_\rho: Z(\SRY(\san) ) \to \C$ denote the corresponding central character.  We evaluate $\chi_p$ at central elements corresponding to the generators to obtain the 5-tuple  $(t_1, t_2, t_3, d_0, d_1) \in \C^5$ defined by  
\[
    \rho \left(T_N(\sqrt{v_1v_2}\beta) \right) =t_1\id_V, \quad  
    \rho \left(T_N(\sqrt{v_1v_2}\alpha) \right) =t_2\id_V, \quad  
    \rho \left(T_N(\gamma)\right)=t_3\id_V
   \]
    \[
    \rho(\del_0) = d_0\id_V,\quad
    \rho(\del_0) = d_1\id_V. 
\]

Following the skein theory literature, we will call $(t_1, t_2, t_3, d_0, d_1)$ the \emph{classical shadow data} of $\rho$.  The numbers $d_0$ and $d_1$ are also sometimes referred to as the \emph{boundary invariants} of $\rho$.

For the remainder of this section we explore to what extent we can recover $\rho$ from its classical shadow data.    We adapt the method of \cite{TakenovReps} for the closed torus and \cite{HavlicekPosta} for $U_q(so_3)$  to show that when the classical shadow data satisfy some polynomial conditions,  there is a unique finite-dimensional irreducible representation of $\SRY(\san)$ with that classical shadow data.

 \begin{theorem}\label{thm:reps-simple}
 Let $A$ be a primitive $2N$th root of unity for $N$ odd. Let $(t_1,t_2,t_3,d_0, d_1) \in \C^5$ such that  
 \[ t_3 \neq \pm 2, \quad 
t_1^2+t_2^2+t_1t_2t_3\neq 0, \quad 
  T_N(2-d_0^2)= 2 -t_1^2-t_2^2-t_3^2-t_1t_2t_3, \quad 
 d_0 + d_1 = 0.   \] 
 Then there exists a unique irreducible finite-dimensional representation of $\SRY(\san)$ with classical shadow data $(t_1,t_2,t_3, d_0, d_1)$. 
 \end{theorem}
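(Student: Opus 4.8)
The plan is to reduce the statement to a representation-theoretic problem for a one-parameter central specialization of the skein algebra of the one-holed torus, and then to adapt the explicit matrix construction of \cite{TakenovReps}. Since $A$ is a primitive $N$th root of $-1$ with $N$ odd, Proposition~\ref{prop:center} shows that $\SRY(\san)$ is module-finite over its center, so in any irreducible finite-dimensional $\rho$ the central generators $v_1,v_2,\del_0,\del_1$ act by scalars, and the hypothesis $d_0+d_1=0$ forces $\rho(\del_1)=-\rho(\del_0)$. I would then rescale, setting $X_1=\sqrt{v_1v_2}\,\b$, $X_2=\sqrt{v_1v_2}\,\a$, $X_3=\g$. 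A direct computation from Corollary~\ref{cor:real022pres} shows that, after using $\del_0+\del_1=0$, relations \eqref{eqn:tor1}--\eqref{eqn:tor3} become exactly the three commutator relations $[X_1,X_2]_A=(A^2-A^{-2})X_3$, $[X_2,X_3]_A=(A^2-A^{-2})X_1$, $[X_3,X_1]_A=(A^2-A^{-2})X_2$ of $\mathcal S(\Sigma_{1,1,0})$, while the cubic \eqref{eqn:tor4} becomes $A X_1 X_2 X_3 = A^2X_1^2+A^{-2}X_2^2+A^2X_3^2 - d_0^2-(A-A^{-1})^2$, i.e.\ it specializes the central boundary element of $\mathcal S(\Sigma_{1,1,0})$ to a scalar depending only on $d_0$ and $A$ (the value $d_0=\pm(A+A^{-1})$ recovers the closed torus, matching $\phi$). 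Thus producing an irreducible $\rho$ with shadow $(t_1,t_2,t_3,d_0,-d_0)$ is equivalent to producing an irreducible module over this specialization on which $T_N(X_1)=t_1$, $T_N(X_2)=t_2$, $T_N(X_3)=t_3$.

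For existence I would build the module by hand, following \cite{TakenovReps}. Because $t_3\neq\pm2$, the $N$ preimages of $t_3$ under $T_N$ are distinct, so on an irreducible module $X_3$ is diagonalizable with an $N$-element spectrum; working in an eigenbasis of $X_3$, relations \eqref{eqn:tor2}--\eqref{eqn:tor3} force $X_1,X_2$ to act as shift-type operators cyclically permuting the eigenlines, giving a candidate $N$-dimensional module. The inequality $t_1^2+t_2^2+t_1t_2t_3\neq0$ is the nondegeneracy condition guaranteeing that the off-diagonal entries of these shift operators are nonzero, hence that the module is irreducible. The identity $T_N(2-d_0^2)=2-t_1^2-t_2^2-t_3^2-t_1t_2t_3$ is the \emph{classical shadow relation}: its right-hand side is the shadow of the boundary cubic (the Markov-type expression attached to $\mathcal S(\Sigma_{1,1,0})$), and the identity is precisely the consistency condition under which the explicit operators close up so that the specialized cubic \eqref{eqn:tor4} holds. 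I would verify by a Chebyshev computation that the constructed operators satisfy \eqref{eqn:tor1}--\eqref{eqn:tor4} if and only if this identity holds, then transport the module back along $X_i\mapsto(\text{generators})$ to obtain a representation of $\SRY(\san)$ with the prescribed shadow, fixing $v_1,v_2$ compatibly.

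For uniqueness I would show this eigenbasis normal form is rigid. Given any irreducible $\rho$ with the prescribed shadow, diagonalizing $X_3$ (possible since $t_3\neq\pm2$) and using \eqref{eqn:tor2}--\eqref{eqn:tor3} to recover the action of $X_1,X_2$ determines $\rho$ up to rescaling of the eigenbasis, which is absorbed by conjugation, with $t_1,t_2$ fixing the remaining ambiguity. This identifies every such $\rho$ with the module constructed above, so it is unique and of dimension $N$. In the language of unicity, conditions \textbf{(1)}--\textbf{(3)} cut out the locus of the center over which the PI-degree-$N$ algebra is Azumaya and hence has a single irreducible representation.

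The main obstacle I expect is the explicit verification: showing that the shift operators built from the $X_3$-eigenbasis satisfy the cubic \eqref{eqn:tor4} \emph{exactly} when $T_N(2-d_0^2)=2-t_1^2-t_2^2-t_3^2-t_1t_2t_3$. This requires careful tracking of products of Chebyshev polynomials evaluated where $A^2$ is a primitive $N$th root of unity, matching the resulting scalar against $2-d_0^2$, and it is also where one must confirm that the excluded loci $t_3=\pm2$ and $t_1^2+t_2^2+t_1t_2t_3=0$ are precisely the degenerate cases where irreducibility or uniqueness fails. The reduction in the first paragraph is routine; essentially all the genuine content lies in this Chebyshev bookkeeping and in checking irreducibility of the constructed module.
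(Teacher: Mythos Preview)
Your approach is correct and close in spirit to the paper's, but you package the argument differently, and the difference is worth noting.

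You use the hypothesis $d_0+d_1=0$ \emph{at the outset}: after rescaling $X_1=\sqrt{v_1v_2}\,\b$, $X_2=\sqrt{v_1v_2}\,\a$, $X_3=\g$, relations \eqref{eqn:tor1}--\eqref{eqn:tor4} collapse exactly to the presentation of $\mathcal S(\Sigma_{1,1,0})$ with its central boundary element specialized to $2-d_0^2$, and from there the existence/uniqueness question is literally Takenov's theorem for the one-holed torus, with puncture invariant $p=2-d_0^2$. This reduction is clean and legitimate; it explains conceptually why the constraint in the theorem takes the form $T_N(2-d_0^2)=2-t_1^2-t_2^2-t_3^2-t_1t_2t_3$.

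The paper instead works directly inside $\SRY(\san)$ \emph{without} assuming $d_0+d_1=0$, under only the weaker hypothesis that a certain product $E=\prod_k E_k$ of eigenvalue-dependent scalars is nonzero. It builds the raising/lowering operators $U_k,D_k$ on the $\rho(\g)$-eigenspaces, proves any irreducible $\rho$ must have the explicit form recorded in Proposition~\ref{thm:rho-description}, and only then checks that these formulas actually satisfy \eqref{eqn:tor1}--\eqref{eqn:tor4}. That last check yields the extra information that the candidate formulas close up \emph{if and only if} $d_0+d_1=0$; the simplification of $E\neq 0$ to the $T_N$-identity then follows. So the paper's route costs more bookkeeping but buys a converse (Proposition~6.8) showing that $d_0+d_1=0$ is forced, not merely assumed; your reduction, by imposing $d_0+d_1=0$ immediately, does not see this.

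Two minor points. First, your appeal to module-finiteness over the center is unnecessary: Schur's lemma alone makes $v_1,v_2,\del_0,\del_1$ act by scalars on an irreducible. Second, the phrase ``fixing $v_1,v_2$ compatibly'' deserves one sentence of care: the values of $v_1,v_2$ are part of the ground data (they are not recorded in the classical shadow), so the uniqueness statement is for the $\C$-algebra obtained after those scalars are fixed, and your rescaling to $X_i$ shows the isomorphism class of $\rho$ is insensitive to the individual choice once $v_1v_2$ is fixed.
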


In fact, we will obtain an explicit description of the representation $\rho$. We decompose this construction into a series of lemmas, which follow the approach of \cite{TakenovReps} to the representations of $\S(\sto)$. 

Throughout, we assume that $\rho$ is not the zero representation. Using the presentation of Corollary \ref{cor:real022pres}, one can show if $\rho(\gamma)= 0$ then $\rho$ is the zero representation, so we may assume $\rho(\gamma)\neq 0$.

We will show a slightly more general version of Theorem \ref{thm:reps-simple}. Let $(t_1, t_2, t_3, d_0, d_1)$ be the classical shadow data of $\rho$ and assume it satisfies $t_3\neq 2$ and 
\begin{equation}\label{eqn:ugly-assumption}
0\neq \prod_{k=1}^N (2+d_0d_1+x^2A^{4k+2}+x^{-2}A^{-4k-2}+(d_0+d_1)(xA^{2k}-x^{-1}A^{-2k}(A+A^{-1}-1))).
\end{equation}

Fix a choice of $x\in \C$ such that $x^N+x^{-N}=t_3.$ By Lemma 4 of \cite{TakenovReps}, every eigenvalue of $\rho(\gamma)$ can be written in the form  $\lambda=xA^{2k}+x^{-1}A^{-2k}$ for some $k\in \{1,\ldots,N\}$. Thus let $\lambda_k=xA^{2k}+x^{-1}A^{-2k}$ denote the (potential) eigenvalues of $\rho(\gamma)$, so that  $T_N(\lambda_k)=t_3$ for all $k$. By assumption $t_3\neq \pm 2$, with implies all the $\lambda_k$ are distinct.

Define $V_k=\{\v\in V\mid \rho(\gamma)\v=\lambda_k\v\}$.  Consider the maps
\[    
U_k=A\rho(\b)-xA^{2k}\rho(\a), \quad D_k=A\rho(\b)-x^{-1}A^{-2k}\rho(\a)
\]

The following lemma explains the notation, $U$ for ``up'' and $D$ for ``down.''

\begin{lemma}
    The operators $U_k$ and $D_k$ satisfy $U_k:V_k\to V_{k+1}$ and $D_k:V_k\to V_{k-1}$. Furthermore, for any $\v\in V_k$,
    \begin{align*}
        \rho(\beta)\v&=-\frac{x^{-1}A^{-2k-1}}{xA^{2k}-x^{-1}A^{-2k}}U_k\v+\frac{xA^{2k-1}}{xA^{2k}-x^{-1}A^{-2k}}D_k\v\\
        \rho(\alpha)\v&=-\frac{1}{xA^{2k}-x^{-1}A^{-2k}}U_k\v+\frac{1}{xA^{2k}-x^{-1}A^{-2k}}D_k\v\\
        \rho(\g)\v&=(xA^{2k}+x^{-1}A^{-2k})\v
    \end{align*}
\end{lemma}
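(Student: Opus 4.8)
The plan is to split the statement into two independent parts: the three closed-form expressions for the action of the generators on $V_k$, which are pure linear algebra, and the mapping properties $U_k:V_k\to V_{k+1}$ and $D_k:V_k\to V_{k-1}$, which genuinely use the algebra relations.

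For the formulas, I observe that $U_k$ and $D_k$ are, by definition, two fixed linear combinations of the operators $\rho(\b)$ and $\rho(\a)$, governed by the matrix $\lilmat{A}{-xA^{2k}}{A}{-x^{-1}A^{-2k}}$ of determinant $A(xA^{2k}-x^{-1}A^{-2k})$. This determinant is nonzero: if $xA^{2k}=x^{-1}A^{-2k}$ then $\lambda_k=\pm 2$, and since $N$ is odd we have $T_N(\pm 2)=\pm 2$, forcing $t_3=\pm 2$ against the standing hypothesis. Hence the system is invertible and I would simply solve it. Subtracting the two defining equations isolates $\rho(\a)$, while the combination $x^{-1}A^{-2k}U_k-xA^{2k}D_k$ isolates $\rho(\b)$; both yield exactly the stated coefficients. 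The identity $\rho(\g)\v=\lambda_k\v$ is just the definition of $V_k$.

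The substance of the lemma is that $U_k$ and $D_k$ act as raising and lowering operators for the $\rho(\g)$-eigenspaces. Using Corollary~\ref{cor:real022pres} and Definition~\ref{def:comm}, the relations $[\a,\g]_A=(A^2-A^{-2})\b$ and $[\g,\b]_A=(A^2-A^{-2})\a$ rearrange to
\[ \rho(\g)\rho(\a)=A^2\rho(\a)\rho(\g)-A(A^2-A^{-2})\rho(\b), \qquad \rho(\g)\rho(\b)=A^{-2}\rho(\b)\rho(\g)+A^{-1}(A^2-A^{-2})\rho(\a). \]
To prove $U_k\v\in V_{k+1}$ for $\v\in V_k$, I would compute $\rho(\g)U_k=A\rho(\g)\rho(\b)-xA^{2k}\rho(\g)\rho(\a)$, substitute these two rewrites to commute $\rho(\g)$ to the right past $\rho(\a)$ and $\rho(\b)$, then evaluate on $\v$ and replace $\rho(\g)\v$ by $\lambda_k\v$. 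Collecting the $\rho(\b)\v$ and $\rho(\a)\v$ terms and substituting $\lambda_k=xA^{2k}+x^{-1}A^{-2k}$, the coefficient of $\rho(\b)\v$ collapses to $A\lambda_{k+1}$ and that of $\rho(\a)\v$ to $-xA^{2k}\lambda_{k+1}$, so that $\rho(\g)U_k\v=\lambda_{k+1}U_k\v$. The identical computation for $D_k$ produces coefficients $A\lambda_{k-1}$ and $-x^{-1}A^{-2k}\lambda_{k-1}$, giving $\rho(\g)D_k\v=\lambda_{k-1}D_k\v$.

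There is no serious obstacle; the work is entirely mechanical. The one point demanding care is the bookkeeping of the powers of $x$ and $A$: the eigenvalue shift $\lambda_k\mapsto\lambda_{k\pm1}$ arises because commuting $\rho(\g)$ past $\rho(\a)$ and $\rho(\b)$ scales the ``$xA^{2k}$-part'' of the eigenvalue by $A^2$ and the ``$x^{-1}A^{-2k}$-part'' by $A^{-2}$, which is precisely the $A^{\pm2}$-twist appearing in the two rearranged relations. This matching of exponents is exactly what makes $U_k$ and $D_k$ genuine raising and lowering operators, and it motivates the $U$/$D$ notation.
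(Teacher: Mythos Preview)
Your proposal is correct and is exactly the ``straightforward calculation'' the paper alludes to (the paper merely cites Lemmas~8 and~9 of \cite{TakenovReps} without giving details). Both the linear-algebra inversion for the three formulas and the commutation argument for the raising/lowering property are the standard route, and your bookkeeping of the $x$ and $A$ exponents is accurate.
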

\begin{proof}
    This is a straightforward calculation. It is given in Lemmas 8 and 9 of \cite{TakenovReps}.
\end{proof}

\begin{lemma}\label{lem:DU-central}
    $D_{k+1}U_k:V_k\to V_k$ is a homothety.
\end{lemma}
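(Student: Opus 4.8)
The plan is to use that $D_{k+1}U_k$ preserves $V_k$ and to rewrite it, as an operator on $V_k$, purely in terms of $\rho(\g)$ and the scalars $\rho(\del_0),\rho(\del_1),\rho(v_1),\rho(v_2)$. Since $\rho(\g)$ acts on $V_k$ as the scalar $\lambda_k$ and the $\rho(\del_i),\rho(v_i)$ are scalars on all of $V$, any such expression is a single scalar on $V_k$, which is exactly the homothety claim. Expanding the composite gives
\[
D_{k+1}U_k = A^2\rho(\b)^2 - xA^{2k+1}\rho(\b)\rho(\a) - x^{-1}A^{-2k-1}\rho(\a)\rho(\b) + A^{-2}\rho(\a)^2,
\]
so the task reduces to rewriting these four quadratic terms using the two presentation relations of Corollary~\ref{cor:real022pres}.

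The main step uses the Casimir relation \eqref{eqn:tor4} to trade the pure squares: dividing by the nonzero scalar $v_1v_2$, it expresses $A^2\rho(\b)^2 + A^{-2}\rho(\a)^2$ as $A\,\rho(\b)\rho(\a)\rho(\g)$ plus a combination that is polynomial in $\rho(\g)$ and the boundary scalars. Applying this to $\v\in V_k$ and using $\rho(\g)\v = \lambda_k\v$ replaces $\rho(\g)$ by $\lambda_k$, leaving $A\lambda_k\,\rho(\b)\rho(\a)\v$ plus a scalar multiple of $\v$. The decisive simplification is that $\lambda_k = xA^{2k}+x^{-1}A^{-2k}$ forces $A\lambda_k - xA^{2k+1} = x^{-1}A^{-2k+1}$, so the operator part of $D_{k+1}U_k$ on $V_k$ collapses to $x^{-1}A^{-2k}\bigl(A\,\rho(\b)\rho(\a) - A^{-1}\rho(\a)\rho(\b)\bigr)$. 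Relation \eqref{eqn:tor1} then rewrites $A\,\rho(\b)\rho(\a) - A^{-1}\rho(\a)\rho(\b)$ as $(v_1v_2)^{-1}\bigl[(A^2-A^{-2})\rho(\g)+(A-A^{-1})(\rho(\del_0)+\rho(\del_1))\bigr]$, which is again scalar on $V_k$. Collecting everything produces an explicit scalar $c_k$ with $D_{k+1}U_k = c_k\,\id_{V_k}$.

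The only real obstacle is bookkeeping: arranging the two substitutions so that the $\rho(\a)^2$ and $\rho(\b)^2$ terms cancel exactly, which is precisely what the coefficient collapse $A\lambda_k - xA^{2k+1} = x^{-1}A^{-2k+1}$ delivers, and is the one place where the specific form of the eigenvalue $\lambda_k$ enters. I would record at the outset that $\rho(v_1)\rho(v_2)$ is a nonzero scalar, since $v_1,v_2$ are units in the ground ring, so the divisions by $v_1v_2$ are legitimate. I would also record the explicit value of $c_k$, since the later lemmas need to know when $D_{k+1}U_k$ is invertible; the non-vanishing of these scalars across $k=1,\dots,N$ is what the product hypothesis \eqref{eqn:ugly-assumption} is designed to guarantee.
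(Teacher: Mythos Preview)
Your proposal is correct and follows essentially the same route as the paper: expand $D_{k+1}U_k$ as $A^2\rho(\b)^2 + A^{-2}\rho(\a)^2 - xA^{2k+1}\rho(\b)\rho(\a) - x^{-1}A^{-2k-1}\rho(\a)\rho(\b)$, then use the commutator relation \eqref{eqn:tor1} (the paper uses the equivalent relation \eqref{eqn:1}) and the cubic relation \eqref{eqn:tor4} to reduce everything to a polynomial in $\rho(\g)$ and the central scalars, which acts on $V_k$ as the explicit scalar $E_k$. The only cosmetic difference is that you apply \eqref{eqn:tor4} first and \eqref{eqn:tor1} second, whereas the paper reverses the order; the coefficient collapse $A\lambda_k - xA^{2k+1} = x^{-1}A^{-2k+1}$ you highlight is exactly the same mechanism the paper uses (in the form $xA^{2k+1} + x^{-1}A^{-2k+1} = A\lambda_k$) to make the mixed terms combine.
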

\begin{proof}
For notation, let $P_k=2+d_0d_1+(d_0+d_1)(\lambda_k-x^{-1}A^{-2k}(A+A^{-1}))$.
    \begin{align*}
        D_{k+1}U_k\v&=(A^2\rho(\beta)^2+A^{-2}\rho(\alpha)^2-x^{-1}A^{-2k-1}\rho(\alpha)\rho(\beta)-xA^{2k+1}\rho(\beta)\rho(\alpha))\v &\text{by expanding the definition}\\
        &=[A^2\rho(\beta)^2+A^{-2}\rho(\alpha)^2-A(xA^{2k}+x^{-1}A^{-2k})\rho(\beta)\rho(\alpha)]\v &\text{by applying \eqref{eqn:1}}\\
        &\qquad +(v_1v_2)^{-1} [x^{-1}A^{-2k}(g(A^2-A^{-2})+(d_0+d_1)(A-A^{-1}))]\v&\\
        &=(v_1v_2)^{-1}[-A^2g^2+g(x^{-1}A^{-2k}(A^2-A^{-2})-d_0-d_1)]\v&\text{by applying \eqref{eqn:tor4}}\\
        &\qquad+(v_1v_2)^{-1} [x^{-1}A^{-2k}(d_0+d_1)(A-A^{-1})-d_0d_1+(A-A^{-1})^2]\v&\\
        &=-(v_1v_2)^{-1}[P_k+(x^2A^{4k+2}+x^{-2}A^{-4k-2})]\v &\text{by applying }\rho(\g)\v=\lambda_k\v
    \end{align*}
\end{proof}

\begin{definition}
	Let $E_k:=-(v_1v_2)^{-1}[P_k+(x^2A^{4k+2}+x^{-2}A^{-4k-2})]$ and let $E=\prod_{h=1}^N E_h$. \end{definition}

Note $E$ is the left-hand side of \eqref{eqn:ugly-assumption}, so, by assumption, $E\neq 0$. To simplify notation, we let $U_{N+k}=U_k$ and similarly for $D_k, P_k, E_k$ and $\lambda_k$.

\begin{lemma}\label{lem:UD-nonzero}
The map $U_{k+N-1}U_{k+N-2}\cdots U_{k}:V_k\to V_k$ is nonzero for all $k$.
\end{lemma}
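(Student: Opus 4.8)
The plan is to deduce the lemma formally from the homothety relation $D_{k+1}U_k=E_k\,\id_{V_k}$ of Lemma~\ref{lem:DU-central} together with the hypothesis $E\neq0$, by establishing two facts: every ``up'' operator $U_k$ is injective, and every eigenspace $V_k$ is nonzero. Granting these, $U_{k+N-1}\cdots U_k:V_k\to V_k$ is a composition of $N$ injective maps on a nonzero space, hence injective and in particular not the zero map.

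First I would record the eigenspace decomposition $V=\bigoplus_{k=1}^{N}V_k$. The central character gives $T_N(\rho(\g))=t_3\,\id_V$, so the minimal polynomial of $\rho(\g)$ divides $T_N(z)-t_3$; this monic polynomial of degree $N$ has the $N$ distinct roots $\lambda_1,\dots,\lambda_N$ established above, so it is separable and $\rho(\g)$ is diagonalizable. Together with Lemma~4 of \cite{TakenovReps}, which forces every eigenvalue of $\rho(\g)$ to be some $\lambda_k$, this yields $V=\bigoplus_{k=1}^{N}V_k$, with indices read mod $N$ since $A^{2N}=1$ gives $\lambda_{k+N}=\lambda_k$.

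Next I would check injectivity of each $U_k$. Because $E=\prod_{h=1}^{N}E_h\neq0$ and $\C$ is a domain, each scalar $E_k$ is nonzero; then $D_{k+1}U_k=E_k\,\id_{V_k}$ shows that $U_kv=0$ forces $E_kv=0$, hence $v=0$. Consequently, whenever $V_k\neq0$ the image $U_k(V_k)$ is a nonzero subspace of $V_{k+1}$, so $V_{k+1}\neq0$ as well.

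The remaining — and, I expect, the only delicate — step is to propagate nonvanishing around the whole cycle. Since $\rho$ is not the zero representation, $V\neq0$, so $V_{k_0}\neq0$ for some $k_0$; feeding this into the implication of the previous paragraph and using that the indices are genuinely cyclic of period $N$ (again via $A^{2N}=1$), one concludes $V_k\neq0$ for all $k$. This wrap-around is exactly where both hypotheses are indispensable: without $E\neq0$ the maps $U_k$ could fail to be injective, and the chain $V_{k_0}\to V_{k_0+1}\to\cdots$ might die before returning to its start. With all $V_k\neq0$ in hand, the composition $U_{k+N-1}\cdots U_k$ is injective on the nonzero space $V_k$ and therefore nonzero, completing the argument. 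One in fact gets slightly more for free: the injections $U_k$ force $\dim V_k\le\dim V_{k+1}$ cyclically, so all $V_k$ share a common dimension and each $U_k$ is an isomorphism.
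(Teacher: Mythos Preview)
Your argument is correct. The paper takes a slightly more compact route: it post-composes the whole ``up'' tower $U_{k+N-1}\cdots U_k$ with the full ``down'' tower $D_{k+1}\cdots D_{k+N}$ and telescopes via Lemma~\ref{lem:DU-central} to obtain $E\cdot\id_{V_k}$ in a single stroke, concluding that the up tower cannot vanish. You instead apply Lemma~\ref{lem:DU-central} one step at a time to deduce that every individual $U_k|_{V_k}$ is injective, and then propagate nonvanishing of the eigenspaces around the cycle. The paper's computation is shorter but tacitly requires $V_k\neq 0$ for the conclusion that $E\cdot\id_{V_k}$ is a nonzero map (and indeed the lemma is only invoked later at an index $k_0$ where $V_{k_0}\neq 0$ is known); your propagation step addresses this point explicitly and, as a bonus, yields the equal-dimension observation that feeds directly into the proof of Lemma~\ref{lem:eigenspace}.
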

\begin{proof}
    We will show the map $D_{k+1}D_{k+2}\cdots D_{k+N}U_{k+N-1}U_{k+N-2}\cdots U_{k}$ is nonzero. Let $\v\in V_k$. For each $j$, $U_{k+N-j}\cdots U_{k+1}U_{k}\v\in V_j$. Repeatedly applying Lemma \ref{lem:DU-central}, we obtain,
\[
D_{k+1}D_{k+2}\cdots D_{k+N}U_{k+N-1}U_{k+N-2}\cdots U_{k}\v=\left(\prod_{j=1}^N E_{k+j}\right)\v=\left(\prod_{j=1}^N E_{j}\right)\v=E\v
\]
As $E\neq 0$ by assumption, this is not the zero map, so $U_{k+N-1}U_{k+N-2}\cdots U_{k}$ must not be the zero map.
\end{proof}

\begin{lemma}\label{lem:eigenspace}
The space $V$ is $N$ dimensional and admits a basis $\{\v_1,\ldots,\v_N\}$ where each $\v_k$ generates the eigenspace $V_k$.
\end{lemma}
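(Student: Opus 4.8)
The plan is to prove that $V$ is exactly $N$-dimensional with an eigenbasis for $\rho(\gamma)$, using the ladder operators $U_k$ and $D_k$ together with the irreducibility of $\rho$ and the distinctness of the eigenvalues $\lambda_k$. First I would observe that since $\rho(\gamma) \neq 0$ and $\rho(\gamma)$ satisfies $T_N(\rho(\gamma)) = t_3 \cdot \mathrm{id}_V$, the minimal polynomial of $\rho(\gamma)$ divides $T_N(\lambda) - t_3$, whose roots are exactly the distinct values $\lambda_1, \dots, \lambda_N$. Hence $\rho(\gamma)$ is diagonalizable and $V = \bigoplus_{k=1}^N V_k$, where some of the $V_k$ may a priori be zero. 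The goal is then to show each $V_k$ is one-dimensional.

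Next I would use the ladder structure to show that no $V_k$ is zero and that they all have equal dimension. By Lemma~\ref{lem:UD-nonzero}, the composite $U_{k+N-1}\cdots U_k : V_k \to V_k$ is nonzero, so in particular $V_k \neq 0$ for every $k$ provided at least one is nonzero (which holds since $V \neq 0$); more precisely, since each $U_j$ maps $V_j \to V_{j+1}$ and the full loop around is nonzero, each $V_k$ must be nonzero. To control dimensions, I would argue that on each $V_k$ the homothety from Lemma~\ref{lem:DU-central} has nonzero scalar $E_k$ (since $E = \prod E_h \neq 0$ forces every factor $E_k \neq 0$), so $D_{k+1}U_k = E_k \cdot \mathrm{id}_{V_k}$ is invertible. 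This makes $U_k : V_k \to V_{k+1}$ injective and $D_{k+1} : V_{k+1} \to V_k$ surjective, so $\dim V_k \leq \dim V_{k+1}$; running this around the cycle $k \mapsto k+1 \mapsto \cdots$ forces all the $V_k$ to have the same dimension $m$, and each $U_k$ to be a bijection.

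Finally I would invoke irreducibility to pin down $m = 1$. The subalgebra generated by $\rho(\alpha), \rho(\beta), \rho(\gamma)$ acts irreducibly, and by the first lemma $\rho(\alpha)$ and $\rho(\beta)$ are each expressible as linear combinations of $U_k$ and $D_k$ on each $V_k$, while $\rho(\gamma)$ acts as $\lambda_k$ on $V_k$. Picking any nonzero $\v \in V_1$, I would show that the cyclic module generated by $\v$ under repeated application of the $U_k$ (which march $\v$ through $V_2, V_3, \dots, V_N$ and back to $V_1$) together with the $D_k$ is a nonzero invariant subspace; by irreducibility it must be all of $V$. Since this cyclic module meets each $V_k$ in the line spanned by a single image of $\v$, each $V_k$ is one-dimensional, giving $\dim V = N$ and the desired basis $\v_k$ with $\v_{k+1}$ a nonzero multiple of $U_k \v_k$. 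The main obstacle I anticipate is the irreducibility step: I must check carefully that the span of the $U$-orbit of a single vector is genuinely $\rho$-invariant under \emph{all} generators—in particular that applying $D_k$ stays inside this span, which follows because $D_{k+1}U_k$ is a scalar, and that $\rho(\alpha), \rho(\beta)$ preserve it, which follows from their expression in terms of $U_k, D_k$ in the first lemma. Handling the edge effects at the ``seam'' $k = N \to 1$ (using the periodic identification $U_{N+k} = U_k$) is where I would be most careful to avoid an off-by-one error.
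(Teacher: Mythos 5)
Your first two steps are sound and actually more detailed than what the paper records: diagonalizability of $\rho(\gamma)$ from the minimal polynomial dividing $T_N(\lambda)-t_3$ (distinct roots since $t_3 \neq \pm 2$), and the fact that all the $V_k$ are nonzero of equal dimension because each $D_{k+1}U_k = E_k\,\id$ is an invertible homothety ($E_k \neq 0$ since $E = \prod_h E_h \neq 0$). The gap is in your final step. For an \emph{arbitrary} nonzero $\v \in V_1$, the span of $\v,\, U_1\v,\, U_2U_1\v,\, \ldots,\, U_{N-1}\cdots U_1\v$ is \emph{not} $\rho$-invariant: applying $U_N$ to the last vector produces $(U_N\cdots U_1)\v \in V_1$, and nothing forces this to be proportional to $\v$. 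The loop (monodromy) operator $L = U_N\cdots U_1 : V_1 \to V_1$ is just an invertible linear map, and if $m = \dim V_1 > 1$ a generic $\v$ is not an eigenvector of $L$; the smallest invariant subspace containing $\v$ then meets $V_1$ in the $L$-cyclic subspace generated by $\v$, which can have dimension up to $m$. So your claim that the cyclic module ``meets each $V_k$ in the line spanned by a single image of $\v$'' fails precisely at the seam, and the argument collapses into circularity: irreducibility only tells you the module is all of $V$, which gives no bound on $m$. This is not the ``off-by-one'' bookkeeping issue you flag; it is a missing idea.

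The missing idea — and the paper's actual move — is to start not with an arbitrary vector but with an \emph{eigenvector of the loop operator}. Since $L$ is invertible (your step 2 gives this; the paper extracts it from $D_{k+1}\cdots D_{k+N}U_{k+N-1}\cdots U_k = E\,\id \neq 0$) and $V_1$ is a nonzero finite-dimensional complex vector space, $L$ has an eigenvector $\v_1$ with eigenvalue $u \neq 0$. With that choice the module closes up at the seam: $U_N\v_N = u\v_1$, $D_{k+1}\v_{k+1} = E_k\v_k$, and $D_1\v_1 = u^{-1}E_N\v_N$, so $W = \mathrm{Span}\{\v_1,\ldots,\v_N\}$ is invariant under all $U_k, D_k$, hence under $\rho(\alpha), \rho(\beta), \rho(\gamma)$ by the expression of these operators in terms of $U_k, D_k$; irreducibility then gives $W = V$, and since the $\v_k$ lie in distinct eigenspaces, $\dim V = N$ and each $V_k$ is a line. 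With this one correction the rest of your argument goes through unchanged.
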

\begin{proof}
 By assumption, $\rho(\g)\neq 0$, so one of its eigenspaces, say $V_{k_0}$, is nonzero. The map $U_{k_0+N-1}\cdots U_{k_0}:V_{k_0}\to V_{k_0}$ is nonzero by Lemma \ref{lem:UD-nonzero}. Therefore, it has an eigenvalue $u\neq 0$ with associated eigenvector $\v_{k_0}$. Set $\v_1=U_NU_{N-1}\cdots U_{k_0}\v_{k_0}$. By construction, $U_{k_0+N-1}\cdots U_{k_0}\v_{k_0}=u\v_{k_0}$, so multiplying by $U_NU_{N-1}\cdots U_{k_0}$ we obtain
    \begin{align*}
        U_NU_{N-1}\cdots U_{k_0}U_{k_0+N-1}\cdots U_{k_0}\v_{k_0}=uU_NU_{N-1}\cdots U_{k_0}\v_{k_0}
        \implies 
        U_NU_{N-1}\cdots U_2U_1\v_1=u\v_1
    \end{align*}
    Now, set $\v_k=U_{k-1}U_{k-2}\cdots U_2U_1\v_1$ for each $k\in \{1,\ldots,N\}$. Let $W$ be a the subspace of $V$ spanned by $\{\v_1,\ldots,\v_k\}$. We will show this subspace is invariant under the action of $\rho(\alpha),\rho(\beta),$ and $\rho(\g)$. First note that it follows from definition and Lemma \ref{lem:DU-central} that
    \[
    U_k\v_k=\begin{cases}
        \v_{k+1}&1\leq k\leq N-1\\
        u\v_1&k=N
    \end{cases}
    \qquad\text{and}\qquad
    D_k\v_k=\begin{cases}
        E_{k-1}\v_{k-1} & 2\leq k \leq N\\
        u^{-1}E_N\v_N & k=1
    \end{cases}.
    \]

    As $E_k$ is a scalar, note that for every $k$, we have $U_k\v_k,D_k\v_k\in W$, so $\rho(\b),\rho(\a),$ and $\rho(\gamma)$ all fix $W$. Hence, as $\rho$ is irreducible, $V=W$. 
\end{proof}

\begin{lemma}\label{lem:calculate-u}
	We have $E=(v_1v_2)^{-N}(t_1^2 +t_2^2 +t_1t_2t_3)$ and $u=-\dfrac{t_1+x^Nt_2}{\sqrt{v_1v_2}^N}.$
\end{lemma}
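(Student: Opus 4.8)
The plan is to compute the two quantities $E$ and $u$ separately, each by exploiting a global constraint already encoded in the central character, rather than by unwinding the products term-by-term. For $E = \prod_{h=1}^N E_h$, I would start from the definition $E_k = -(v_1v_2)^{-1}[P_k + x^2A^{4k+2} + x^{-2}A^{-4k-2}]$ and observe that the factor $(v_1v_2)^{-1}$ pulls out as $(v_1v_2)^{-N}$, leaving a product over the bracketed expressions. The key algebraic fact is that as $k$ ranges over $1,\dots,N$, the quantities $A^{2k}$ run over all $N$th roots of a fixed number (since $A$ is a primitive $N$th root of $-1$ and $N$ is odd), so $\prod_{k} (\text{affine expression in } A^{2k})$ collapses via the factorization $\prod_{k=1}^N(T - \zeta^k s) = T^N - s^N$ into something depending only on $x^N$ and on the central-character values $t_1,t_2,t_3,d_0,d_1$. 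The assumption $d_0 + d_1 = 0$ from Theorem~\ref{thm:reps-simple} (or the more general hypothesis \eqref{eqn:ugly-assumption}) should kill the cross terms involving $d_0+d_1$ inside $P_k$, simplifying $P_k$ to $2 + d_0d_1$, and I expect the resulting product to telescope into $(v_1v_2)^{-N}(t_1^2 + t_2^2 + t_1t_2t_3)$ after substituting $x^N + x^{-N} = t_3$.

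For the scalar $u$, my approach is to identify it as the eigenvalue of the cyclic ``up'' operator $U_N U_{N-1}\cdots U_1$ acting on $\v_1$, and to compute that action by relating the composite to a known central element. Since $u$ appears in Lemma~\ref{lem:eigenspace} as the factor produced by going all the way around the eigenspace cycle with the $U_k$, I would evaluate $T_N(\sqrt{v_1v_2}\,\beta)$ (whose image is $t_1\,\id_V$) on the distinguished vector $\v_1$, expressing $\rho(\beta)$ in terms of $U_k, D_k$ via the first displayed formula of the earlier lemma. The product $\prod_k$ of the $U_k$-coefficients, together with the relation $U_N\cdots U_1 \v_1 = u\v_1$, should let me read off a linear equation for $u$; matching it against the corresponding computation for $\rho(\alpha)$ (image $t_2$) pins down $u = -(t_1 + x^N t_2)/\sqrt{v_1v_2}^{\,N}$. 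The factor $x^N$ enters precisely because traversing the cycle multiplies by powers $A^{2k}$ whose product over a full period is governed by $x^N$.

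The main obstacle I anticipate is the bookkeeping in the $E$ computation: the bracketed factor is a genuine quadratic (in $x A^{2k}$ and its inverse) rather than linear, so the clean $\prod(T - \zeta^k s) = T^N - s^N$ trick does not apply directly. I expect to need to factor each $E_k$ into two linear pieces over the roots, or equivalently to recognize $\prod_k(2 + d_0 d_1 + x^2 A^{4k+2} + x^{-2}A^{-4k-2})$ as a resultant-type expression that factors as a product of two $N$th-power differences; carefully tracking which roots of unity appear (using that $A^2$ is a primitive $N$th root of unity when $A$ is a primitive $N$th root of $-1$ with $N$ odd) is where sign and indexing errors are most likely. Once the product is correctly organized, substituting $x^N + x^{-N} = t_3$ and recognizing $t_1^2 + t_2^2 + t_1 t_2 t_3$ as the symmetric combination that emerges is the payoff; this is also exactly the quantity whose nonvanishing is assumed in Theorem~\ref{thm:reps-simple}, confirming that $E \neq 0$ and tying the computation back to the irreducibility and uniqueness argument.
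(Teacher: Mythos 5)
Your plan for $E$ cannot work as stated, and the failure is structural, not bookkeeping. The scalars $E_k = -(v_1v_2)^{-1}[P_k + x^2A^{4k+2} + x^{-2}A^{-4k-2}]$ depend only on $x$ (hence $t_3$), $d_0$, $d_1$, and $A$; the numbers $t_1$ and $t_2$ never appear in them. So no roots-of-unity manipulation of $\prod_k E_k$ can ever produce $t_1^2+t_2^2+t_1t_2t_3$. What the direct computation actually gives (and only under $d_0+d_1=0$, which is \emph{not} a hypothesis of this lemma --- the lemma is proved assuming only irreducibility, $t_3\neq\pm2$, and the nonvanishing in \eqref{eqn:ugly-assumption}) is
\[
E \;=\; -(v_1v_2)^{-N}\bigl[T_N(2-d_0^2)-2+t_3^2\bigr],
\]
which is the content of \eqref{eqn:simplifiedE=ts} and is used \emph{after} the lemma, to translate its conclusion into the hypothesis of Theorem~\ref{thm:reps-simple}. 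The identity $E=(v_1v_2)^{-N}(t_1^2+t_2^2+t_1t_2t_3)$ is not an algebraic identity in the shadow data at all: it is a nontrivial compatibility constraint that the existence of the representation forces on $(t_1,t_2,t_3,d_0,d_1)$, and it must be derived from the representation itself.

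This is where your two computations, which you treat as independent, are actually entangled, and the paper's proof exploits exactly that. Writing $\rho(\beta)=U+D$ and $\rho(\alpha)=xA^{2k+1}U+x^{-1}A^{1-2k}D$, a parity/grading argument (all monomials of $T_N$ have odd degree, and the result must preserve each $V_k$) kills every mixed monomial, leaving $T_N(\sqrt{v_1v_2}(U+D))=\sqrt{v_1v_2}^N(U^N+D^N)$; but $D^N\v_k$ contributes $u^{-1}E$, not just $u$. So the central character gives \emph{two equations, each involving both $u$ and $E$}:
\[
t_1=\sqrt{v_1v_2}^{\,N}\,\frac{x^{-N}u-x^Nu^{-1}E}{x^N-x^{-N}},
\qquad
t_2=\sqrt{v_1v_2}^{\,N}\,\frac{-u+u^{-1}E}{x^N-x^{-N}},
\]
quadratic (not linear, as you claim) in $u$. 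The lemma's two conclusions come out simultaneously: these quadratics have a common root $u$ if and only if $E=(v_1v_2)^{-N}(t_1^2+t_2^2+t_1t_2t_3)$, and in that case $u=-(t_1+x^Nt_2)/\sqrt{v_1v_2}^{\,N}$ is the common root, unique since $u\neq0$. Your second paragraph has the right ingredients for producing these equations, but by outsourcing $E$ to a direct product computation you lose the consistency argument that is the actual proof of the first assertion, and you are left with no way to obtain it.
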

\begin{proof}
    Define $U,D:W\to W$ for $\v_k\in V_k$ as
\[        U\v_k=-\frac{x^{-1}A^{-2k-1}}{xA^{2k}-x^{-1}A^{-2k}}U_k\v_k:=H^U_k\cdot U_k\v_k
\qquad \text{and}\qquad
        D\v_k=\frac{xA^{2k-1}}{xA^{2k}-x^{-1}A^{-2k}}D_k\v_k:=H_k^D\cdot D_k\v_k
 \]
    and notice $U(V_k)=V_{k+1}$ and $D(V_k)=V_{k-1}$. Also, note $\rho(\beta)=U+D$. Hence,
    \[
    t_1\id_V=T_N(\rho(\beta)\sqrt{v_1v_2})=T_N(\sqrt{v_1v_2}(U+D))
    \]
    The left hand side is a polynomial in $\sqrt{v_1v_2}U$ and $\sqrt{v_1v_2}D$ over $\C[v_1,v_2]$. Consider a monomial of length $m$, where $n$ of the terms are $\sqrt{v_1v_2}U$ and $m-n$ are $\sqrt{v_1v_2}D$. It is a property of $T_N$ that all the monomials are of odd degree, so $m$ is odd. The monomial sends $V_k$ to $V_{k+n-(m-n)}=V_{k+2n-m}$. As $T_N(\sqrt{v_1v_2}(U+D))=t_1\id_V$ is homothety, it must fix all the $V_k$. Hence, the only monomials with nonzero coefficient must satisfy $2n-m\equiv 0\text{ mod }N$. As $n,m\in \{0,\ldots,N\}$, we have $(m,n)$ is $(N,0)$ or $(N,N)$. As $T_N(\sqrt{v_1v_2}(U+D))$ has degree $N$, $T_N(\sqrt{v_1v_2}(U+D))=\sqrt{v_1v_2}^{N}U^N+\sqrt{v_1v_2}^{N}D^N$. 
   
    Let $1\leq k\leq N$. One can calculate
    \begin{align*}
        U^N\v_k&=U^{N-1}(H_k^UU_k\v_k)&\\
        &=H_kH^U_{k+1}\cdots H^U_{k+N-1}U_{k+N-1}\cdots U_k\v_k&\\
        &=H_kH^U_{k+1}\cdots H^U_{k+N-1}u\v_k &\text{from the formula for $U_k\v_k$ in Lemma \ref{lem:eigenspace}}
    \end{align*}
    Also, as $N$ is odd, and $A^2$ is a primitive $N$th root of unity, we have $\prod_{k=1}^N(xA^{2k}-x^{-1}A^{-2k})=x^N-x^{-N}$. Hence,
    \begin{align*}
        H^U_kH^U_{k+1}\cdots H^U_{k+N-1}&=\prod_{k=1}^NH^U_k
        =\prod_{k=1}^N(-\frac{x^{-1}A^{-2k-1}}{xA^{2k}-x^{-1}A^{-2k}})
        =-\frac{x^{-N}A^{-N2k}A^{-N}}{\prod_{k=1}^N(xA^{2k}-x^{-1}A^{-2k})}
        =\frac{x^{-N}}{x^N-x^{-N}}
    \end{align*}
    Thus, $U^N\v_k=\frac{x^{-N}u}{x^N-x^{-N}}\v_k.$
    We can also calculate $D^N\v_k=H_k^DH_{k+1}^D\cdots H_{k-N+1}^DD_{k-N+2}\cdots D_k\v_k$.
    As before,
    \[
    H_k^DH_{k+1}^D\cdots H_{k+N-1}^D=\frac{x^NA^{2kN}A^{-N}}{\prod_{k=1}^N (xA^{2k}-x^{-1}A^{-2k})}
    =-\frac{x^N}{x^N-x^{-N}}
    \]
    and, by applying the formula for $D_k\v_k$ in Lemma \ref{lem:eigenspace},
    \begin{align*}
        \underbrace{D_{k-N+1}D_{k-N+2}\cdots D_{k-1}D_k}_{N \text{ terms}}\v_k
        =u^{-1}E\v_k
    \end{align*}
    Hence, $D^N\v_k=-\frac{u^{-1}x^NE}{x^N-x^{-N}}\v_k$.
    So we have
    \begin{equation}\label{eqn:t1-u-poly}
     t_1\v_k=T_N(\sqrt{v_1v_2}(U+D))\v_k
    =(\sqrt{v_1v_2}^{N}U^N+\sqrt{v_1v_2}^{N}D^N)\v_k
    =\sqrt{v_1v_2}^{N}\frac{x^{-N}u-x^Nu^{-1}E}{x^N-x^{-N}}\v_k
    \end{equation}
    which determines $u$ up to two possibilities.

    To determine $u$ completely, we do the same calculations with $\rho(\a)$. Observe $\rho(\a)=xA^{2k+1}U+x^{-1}A^{1-2k}D$. As before,
    \[
    t_2\id_V=T_N(\rho(\a)\sqrt{v_1v_2})=T_N(\sqrt{v_1v_2}(xA^{2k+1}U+x^{-1}A^{1-2k}D))=-\sqrt{v_1v_2}^{N}x^NU^N-\sqrt{v_1v_2}^{N}x^{-N}D^N.
    \]
    So by the calculations above of $U^N$ and $D^N$,
    \begin{equation}\label{eqn:t2-u-poly}
    t_2=\sqrt{v_1v_2}^{N}\frac{-u+u^{-1}E}{x^N-x^{-N}}.
    \end{equation}
    
    Now, $u$ is a common root of \eqref{eqn:t1-u-poly} and \eqref{eqn:t2-u-poly}. By explicitly finding the roots of these equations and a bit of casework, one can show that they have common roots if and only if $E=(v_1v_2)^{-N}(t_1^2+t_2^2+t_1t_2t_3)$, and, in this case, $-\frac{t_1+x^Nt_2}{\sqrt{v_1v_2}^{N}}$  is always a common root, and it is unique when $u\neq0$, which is the case here.
\end{proof}

We summarize our findings so far in the following proposition. 

\begin{proposition}\label{thm:rho-description}
    Let $\rho$ be a nontrivial, irreducible, finite dimensional representation of $\mathcal{S}^{RY}(\Sigma_{0,2,2})$  such that its classical shadow data $(t_1, t_2, t_3, d_0, d_1)$  satisfy $t_3\neq \pm 2$ and $E\neq 0$. Then necessarily 
    \[
    E=(v_1v_2)^{-N}(t_1^2+t_2^2+t_1t_2t_3)
    \]
 and for any $x$ such that $x^N + x^{-N} = t_3$, $\rho$ is given by
    \begin{align*}
        \rho(\b)\cdot \v_k&=-\frac{x^{-1}A^{-2k-1}}{xA^{2k}-x^{-1}A^{-2k}}U_k\v_k+\frac{xA^{2k-1}}{xA^{2k}-x^{-1}A^{-2k}}D_k\v_k\\
        \rho(\a)\cdot \v_k&=-\frac{1}{xA^{2k}-x^{-1}A^{-2k}}U_k\v_k+\frac{1}{xA^{2k}-x^{-1}A^{-2k}}D_k\v_k\\
        \rho(\g)\cdot \v_k&=(xA^{2k}+x^{-1}A^{-2k})\v_k
    \end{align*}
    where
    \[
    U_k\v_k=\begin{cases}
        \v_{k+1}&1\leq k\leq N-1\\
        u\v_1&k=N
    \end{cases}
    \qquad \text{and}\qquad 
     D_k\v_k=\begin{cases}
        E_{k-1}\v_{k-1} & 2\leq k \leq N\\
        u^{-1}E_N\v_N & k=1
    \end{cases}
    \]
    and $u=-\frac{t_1+x^Nt_2}{\sqrt{v_1v_2}^{N}}$ and 
    \begin{align*}
        E_k&=-v_1^{-1}v_2^{-1}(P_k+(x^2A^{4k+2}+x^{-2}A^{-4k-2}))\\
        P_k&=2+d_0d_1+(d_0+d_1)(\lambda_k-x^{-1}A^{-2k}(A+A^{-1}))
    \end{align*}
\end{proposition}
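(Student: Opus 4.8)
The plan is to assemble the lemmas established so far in this section; the proposition is essentially their consolidation, and no genuinely new computation is required. Begin with the standing hypotheses: $\rho$ is nontrivial and irreducible, so by the observation preceding these lemmas we have $\rho(\g)\neq 0$. Fix any $x\in\C$ with $x^N+x^{-N}=t_3$. By Lemma~4 of \cite{TakenovReps}, every eigenvalue of $\rho(\g)$ has the form $\lambda_k=xA^{2k}+x^{-1}A^{-2k}$, and since $t_3\neq\pm 2$ the values $\lambda_1,\dots,\lambda_N$ are pairwise distinct. This fixes the eigenvalue data and hence the third displayed formula, $\rho(\g)\cdot\v_k=\lambda_k\v_k$.

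With the eigenspaces $V_k$ and the operators $U_k,D_k$ in hand, the first (unlabeled) lemma above records the action of $\rho(\a),\rho(\b),\rho(\g)$ on each $V_k$, which is exactly the pair of displayed formulas for $\rho(\b)$ and $\rho(\a)$ in the statement. Lemma~\ref{lem:DU-central} then identifies $D_{k+1}U_k$ with the scalar $E_k$, and Lemma~\ref{lem:UD-nonzero}---which is where the hypothesis $E\neq 0$ enters---shows the cyclic composite $U_{k+N-1}\cdots U_k$ is nonzero. Feeding these into Lemma~\ref{lem:eigenspace} and invoking irreducibility yields $\dim V=N$, the basis $\{\v_1,\dots,\v_N\}$, and the case-by-case formulas for $U_k\v_k$ and $D_k\v_k$ in terms of a single unknown scalar $u$. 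At this point $\rho$ is determined up to the value of $u$.

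Finally, Lemma~\ref{lem:calculate-u} pins everything down: comparing the two expressions for $t_1$ and $t_2$ obtained by threading $\rho(\b)$ and $\rho(\a)$ by $T_N$, namely \eqref{eqn:t1-u-poly} and \eqref{eqn:t2-u-poly}, forces $E=(v_1v_2)^{-N}(t_1^2+t_2^2+t_1t_2t_3)$ and $u=-(t_1+x^Nt_2)/\sqrt{v_1v_2}^{\,N}$, which are precisely the asserted identity and value. The one point deserving a line of justification is the phrase \emph{for any} $x$: since $A^{2N}=1$, any other solution of $x^N+x^{-N}=t_3$ has the form $xA^{2j}$ or $x^{-1}A^{2j}$, and replacing $x$ accordingly only reindexes the $\lambda_k$ cyclically or reflects the roles $U\leftrightarrow D$; thus the same $\rho$ is described with a relabeled basis, and the formulas hold verbatim for every admissible $x$.

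I expect the real work to be bookkeeping rather than depth, since the delicate steps are already isolated upstream: Lemma~\ref{lem:UD-nonzero}, whose conclusion is what keeps $\rho$ nonzero and drives the dimension count, and the casework in Lemma~\ref{lem:calculate-u} that extracts a single consistent value of $u$ from the pair of polynomial conditions \eqref{eqn:t1-u-poly}--\eqref{eqn:t2-u-poly}. The assembly itself is routine, and the only subtlety to keep straight is the $x$-independence just described.
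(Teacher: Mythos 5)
Your proposal is correct and matches the paper's approach exactly: the paper offers no separate proof of this proposition beyond the sentence ``We summarize our findings so far,'' so the intended argument is precisely the assembly of the preceding lemmas (the unlabeled action lemma, Lemmas~\ref{lem:DU-central}, \ref{lem:UD-nonzero}, \ref{lem:eigenspace}, and \ref{lem:calculate-u}) that you carry out, with the hypotheses $t_3\neq\pm2$ and $E\neq 0$ entering where you say they do. Your added justification of the ``for any $x$'' clause (other solutions being $xA^{2j}$ or $x^{-1}A^{2j}$, giving a cyclic reindexing or a $U\leftrightarrow D$ swap) is a small refinement beyond what the paper records, and it is correct since $A^2$ is a primitive $N$th root of unity.
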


Observe that  $\rho$ is entirely determined, up to equivalence, by its classical shadow data. 

One may ask if the converse of Proposition \ref{thm:rho-description} is true. Namely, on may ask if  $(t_1,t_2,t_3,d_0,d_1)\in \C^5$ such that $t_3\neq \pm 2$ and $E=(v_1v_2)^{-N}(-t_1^2-t_2^2-t_1t_2t_3)\neq 0$, do the formulas for $\rho(\alpha),\rho(\beta),\rho(\gamma)$ define a representation of $\SRY(\san)$. The following Proposition describes the extent to which the converse fails.

\begin{proposition}
    If $(t_1,t_2,t_3,d_0,d_1)\in \C^5$ satisfies the conditions of Theorem \ref{thm:rho-description}, the formulas in Proposition \ref{thm:rho-description} define a representation if and only if $d_0+d_1=0$.
\end{proposition}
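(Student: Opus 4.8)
The plan is to verify directly that the operators written down in Proposition~\ref{thm:rho-description} satisfy each of the four relations \eqref{eqn:tor1}--\eqref{eqn:tor4} of Corollary~\ref{cor:real022pres}, treating $\rho(\del_0)=d_0\,\id_V$ and $\rho(\del_1)=d_1\,\id_V$ as central scalars, and to track exactly where $d_0+d_1$ enters. Since $\rho(\g)$ is diagonal in the basis $\{\v_1,\dots,\v_N\}$ while $\rho(\a),\rho(\b)$ shift the index by $\pm 1$, each relation can be checked by comparing, for every $k$, the coefficient of each $\v_j$ in the image of $\v_k$. The cyclic conventions $\v_{k+N}=\v_k$, $U_N\v_N=u\v_1$, $D_1\v_1=u^{-1}E_N\v_N$ make the round-trip products $D_{k+1}U_k$ and $U_{k-1}D_k$ independent of $u$, so the boundary indices $k=1,N$ obey the same identities as the bulk and need no separate treatment. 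First I would dispose of \eqref{eqn:tor2} and \eqref{eqn:tor3}: these involve only $\g$ together with one of $\a,\b$, so on $\v_k$ they reduce to matching the $\v_{k\pm1}$-coefficients; since the ``down'' coefficients of both $\rho(\a)$ and $\rho(\b)$ carry the same factor $E_{k-1}$, that factor cancels and each relation collapses to scalar identities in $x,A$ that hold for all admissible data, independently of $d_0,d_1$.

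The content lies in \eqref{eqn:tor1} and \eqref{eqn:tor4}, and I would couple them using the identity, valid on $V_k$ after expanding $D_{k+1}U_k$ and using $\rho(\g)\v_k=\lambda_k\v_k$,
\[
D_{k+1}U_k \;=\; W_k \;+\; x^{-1}A^{-2k}\,[\b,\a]_A, \qquad W_k:=A^2\rho(\b)^2+A^{-2}\rho(\a)^2-A\lambda_k\,\rho(\b)\rho(\a),
\]
where $W_k$ is the combination appearing in \eqref{eqn:tor4}. Because $D_{k+1}U_k=E_k\,\id$ on $V_k$ by construction, this reads $W_k=E_k\,\id-x^{-1}A^{-2k}[\b,\a]_A$. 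Comparing off-diagonal ($\v_{k\pm2}$) parts then shows the off-diagonal part of \eqref{eqn:tor4} vanishes if and only if that of \eqref{eqn:tor1} does, and a short check of the coefficients from Proposition~\ref{thm:rho-description} shows the $\v_{k\pm2}$-component of $[\b,\a]_A$ is identically zero. Thus both off-diagonal parts always vanish, and everything reduces to the two diagonal equations.

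The key computation is the diagonal of \eqref{eqn:tor1}. After the off-diagonal terms drop out, the $\v_k$-coefficient of $v_1v_2[\b,\a]_A$ equals $C_k\,v_1v_2(E_k-E_{k-1})$ with $C_k=-1/(xA^{2k}-x^{-1}A^{-2k})$, and in the difference $E_k-E_{k-1}$ the constant and $d_0d_1$ terms of $P_k$ cancel, leaving a quantity affine in $s:=d_0+d_1$. I would match this against the prescribed right-hand side $(A^2-A^{-2})\lambda_k+(A-A^{-1})s$: the $s$-independent part reduces to the identity $C_k\,v_1v_2(E_k-E_{k-1})|_{s=0}=(A^2-A^{-2})\lambda_k$, which holds for every $k$, whereas the coefficients of $s$ on the two sides do \emph{not} agree, their discrepancy being a nonzero multiple of $(A^2-1)$ (using $A\neq1$). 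Hence \eqref{eqn:tor1} forces $s\cdot(\text{nonzero})=0$, i.e.\ $d_0+d_1=0$. Conversely, when $s=0$ the simplification $P_k=2-d_0^2$ makes the diagonal of \eqref{eqn:tor1} an identity, and substituting $s=0$ together with \eqref{eqn:tor1} into $\operatorname{diag}W_k=E_k-x^{-1}A^{-2k}\operatorname{diag}[\b,\a]_A$ shows the diagonal of \eqref{eqn:tor4} also matches its prescribed value; combined with \eqref{eqn:tor2}, \eqref{eqn:tor3} and the vanishing off-diagonals, this gives a genuine representation.

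The main obstacle is this diagonal-of-\eqref{eqn:tor1} calculation: isolating $C_k$, simplifying $E_k-E_{k-1}$ so only its $s$-linear part survives, and proving both that the $s$-independent part reproduces $(A^2-A^{-2})\lambda_k$ and that the $s$-coefficient is genuinely unequal to $(A-A^{-1})$ for every $k$. This is where the standing hypotheses are used: $t_3\neq\pm2$ keeps the $\lambda_k$ distinct and the denominators $xA^{2k}-x^{-1}A^{-2k}$ nonzero, while $E\neq0$ keeps the whole module structure well-defined. The remaining bookkeeping—the cyclic boundary terms and the coupling that lets \eqref{eqn:tor4} follow once \eqref{eqn:tor1} is settled—is routine but must be carried out carefully.
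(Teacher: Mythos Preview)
Your overall strategy matches the paper's: verify the defining relations directly on the basis $\{\v_k\}$ and isolate where $s=d_0+d_1$ enters, finding that everything comes down to the diagonal part of relation~\eqref{eqn:tor1}. Your reductions for \eqref{eqn:tor2}, \eqref{eqn:tor3}, the off-diagonal vanishing of $[\b,\a]_A$, and the $s$-independent part of the diagonal all check out.

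The gap is in your key claim that the $s$-coefficients on the two sides ``do not agree, their discrepancy being a nonzero multiple of $(A^2-1)$.'' The discrepancy at a fixed index $k$ is a rational expression in $x$ and $A$, and for each $k$ there \emph{is} a value of $x$ making it vanish: solving your own equation $c_k\cdot(\text{$s$-linear part of }v_1v_2(E_{k-1}-E_k))=(A-A^{-1})$ gives a quadratic condition on $x$ depending on $k$. So ``nonzero for every $k$'' is not automatic and can fail at individual indices. What you actually need is that these conditions cannot all hold simultaneously for the same $x$.

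This is exactly the point the paper addresses: it checks \eqref{eqn:tor1} on $\v_1$ and on $\v_N$ separately, obtaining in each case the alternative ``either $d_0+d_1=0$ or $x$ satisfies a specific polynomial equation'' (equations \eqref{eq:rep-cond-1} and \eqref{eq:rep-cond-2}), and then observes that those two polynomial equations in $x$ have no common root. Hence at least one of the two diagonal checks forces $s=0$. Your argument would be complete once you replace the unjustified ``nonzero for every $k$'' by a comparison of the discrepancy conditions at two distinct indices and show they are incompatible.
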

\begin{proof}
	If $d_0+d_1=0$, one can check this by direct computation.
	
Conversely, assume those formulas yield a representation. We can check by hand that 
    \[
    v_1v_2A\rho(\b)\rho(a)\cdot \v_1-  v_1v_2A^{-1}\rho(\a)\rho(\b)\cdot \v_1=(A^2-A^{-2})\rho(\g)\cdot \v_1+(A-A^{-1})(d_0+d_1)\v_1
    \]
    if and only if $d_0+d_1=0$ or
    \begin{equation}\label{eq:rep-cond-1}
        A+A^{-1}=x(1-A^2)+x^{-1}(A^{-3}-A^{-2}+1-A)
    \end{equation}
    Similar calculations show 
    \[
    v_1v_2A\rho(\b)\rho(a)\cdot \v_N-  v_1v_2A^{-1}\rho(\a)\rho(\b)\cdot \v_N=(A^2-A^{-2})\rho(\g)\cdot \v_N+(A-A^{-1})(d_0+d_1)\v_N
    \]
    holds if and only if $d_0+d_1=0$ or 
    \begin{equation}\label{eq:rep-cond-2}
        0=(1-A)x^2+A+A^2+A^3
    \end{equation}
    Equations \ref{eq:rep-cond-1} and \ref{eq:rep-cond-2} have no common solutions $x$, so they cannot hold simultaneously. Hence, we must have $d_0+d_1=0$.
\end{proof}

Now, to complete the proof of Theorem~\ref{thm:reps-simple} it suffices to observe that if $d_0+d_1=0$, one can apply Lemma 4 of \cite{TakenovReps} to show
\begin{equation}\label{eqn:simplifiedE=ts}
 \prod_{k=1}^N E_k=(v_1v_2)^N(t_1^2+t_2^2+t_1t_2t_3) 
    \iff
    T_N(2-d_0^2)-2=-t_1^2-t_2^2-t_3^2-t_1t_2t_3.
\end{equation}

\begin{remark} 
We may think of 
$
d_0+d_1=0$ and $T_N(2-d_0^2)-2=-t_1^2-t_2^2-t_3^2-t_1t_2t_3
$
as analogues of the puncture invariant conditions for the skein algebra of the torus by \cite[Lem.~7]{TakenovReps}. 
\end{remark}

\begin{remark} Observe  that there are alternative approaches to construct representations.  For example, one can construct a representation of $\S^{RY}(\Sigma_{0,2,2})$ by composing the quantum trace map of \cite{BKLQTrace} from $\S^{RY}(\Sigma_{0,2,2})$ into a quantum torus with a representation of the quantum torus.  However, it is unknown whether the resulting representation is irreducible or what are the irreducible components.  

Alternatively, one can compose the map $\phi$ from Theorem~\ref{thm:hom-to-torus} with an  irreducible representation of $\mathcal{S}(\Sigma_{1,0,0})$ \cite{FrohmanGelca, TakenovReps, YuReps} to obtain  an irreducible representation of $\S^{RY}(\Sigma_{0,2,2})$.  Our computations above recover all the representations produced in this way.  

Note that if we substitute $d_0=A+A^{-1}$ and $d_1=-A-A^{-1}$, $P_k$ becomes $-A^2-A^{-2}$, meaning that it corresponds under the homomorphism $\phi: \mathcal{S}^{RY}(\Sigma_{0,2,2})\to \mathcal{S}(\Sigma_{1,0,0})$ to a nullhomotopic loop in the closed torus. Similarly, if we substitute $d_0=A+A^{-1}$ into \eqref{eqn:simplifiedE=ts}, we recover the condition on the puncture invariant for the once punctured torus at $p=-A^2-A^{-2}$, namely, $T_N(-A^2-A^{-2})=-t_1t_2t_3-t_1^2-t_2^2-t_3^3+2$, given in  \cite[Lem.~7]{TakenovReps}.   Our investigation was motivated by whether all of the representations of $\SRY(\san)$ were constructed this way, which we found to the negative.  \\
\end{remark}

\begin{problem}
 Classify the representation theory of $\SRY(\san)$ when the classical shadow data do \emph{not} meet the requirements of Theorem~\ref{thm:reps-simple}.  \end{problem}

\section{Evidence for Positivity Conjecture of $\S^{RY}(\san)$} \label{sec:pos}

We consider a basis for a specialization of $\S^{RY}(\san)$ that was proposed in \cite{KaruoPositive} to be a positive basis for this algebra, meaning that the structure constants for multiplication in that basis are in $\Z_{\geq 0} [ A^{\pm \frac12}]$.   

By the Product-to-Sum formula of \cite{FrohmanGelca}, multiplication in the skein algebra of the closed torus $\S(\sto)$ is understood completely, and we describe a method that uses the  surjective homomorphism of Theorem \ref{thm:hom-to-torus} to deduce the structure constants of the specialization of $\S^{RY}(\san)$.  
We lay the groundwork for this approach in Section \ref{sec:torusknots} and give some closed form formulas for some infinite families of products in Section \ref{sec:strconst}.   We find that these formulas support the positivity conjecture of \cite{KaruoPositive} for  $\S^{RY}(\san)$. 

\subsection{A geometric basis for $S^{RY}(\Sigma_{0,2,2})$}

In this subsection, we construct a geometric basis $\{\twos{n}{k}_g\}$ for $S^{RY}(\Sigma_{0,2,2})$ and investigate some of its basic properties. We will use this basis in the next subsection to define another basis that corresponds to the one proposed in \cite{KaruoPositive}.

\begin{figure}
\centering
    \includegraphics[width=0.9\linewidth]{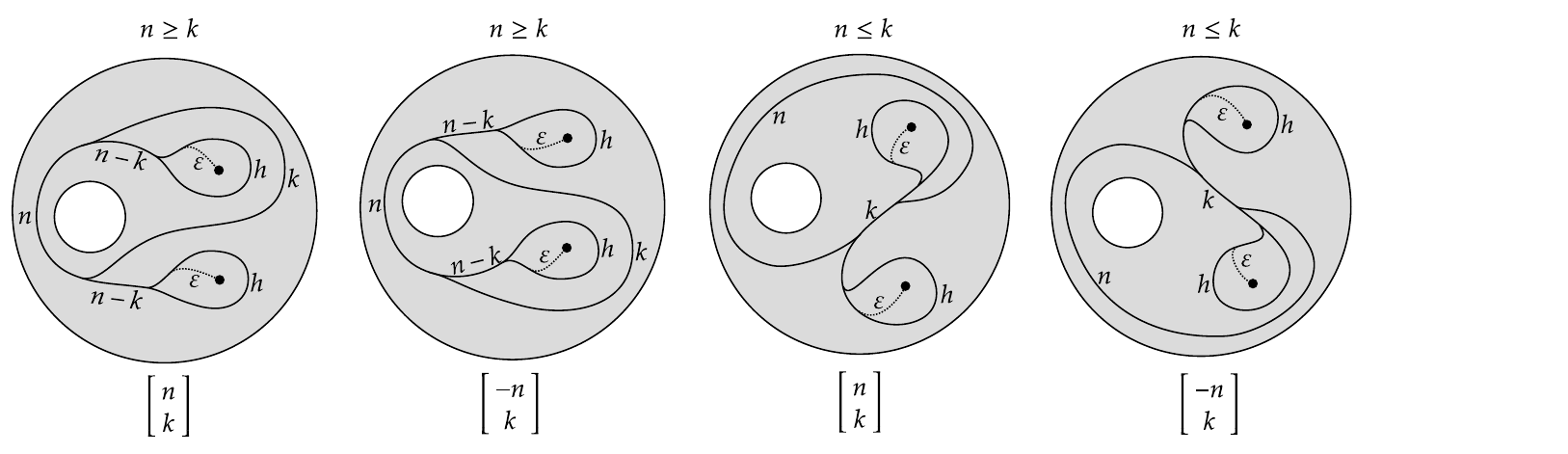}
    \caption{The geometric basis for $\S^{RY}(\Sigma_{0,2,2})$. Here, $\e=n-k\mod 2$ and $h=(n-k+\e)/2$.}
    \label{fig:geometric-basis}
\end{figure}

\begin{definition}
	Define $\twos{n}{k}_g$, the \textit{geometric basis}, by Figure \ref{fig:geometric-basis}.
\end{definition}

One can show by straightforward casework that $\{\twos{n}{k}_g\mid n,k\in \Z\backslash\{0\}\}$ is a basis for $\SRY(\san)$ over $\Z[A^{\pm 1/2}, \del_0,\del_1]$.

\begin{example}
We have \\
\begin{center}
\includegraphics[width=0.7\textwidth]{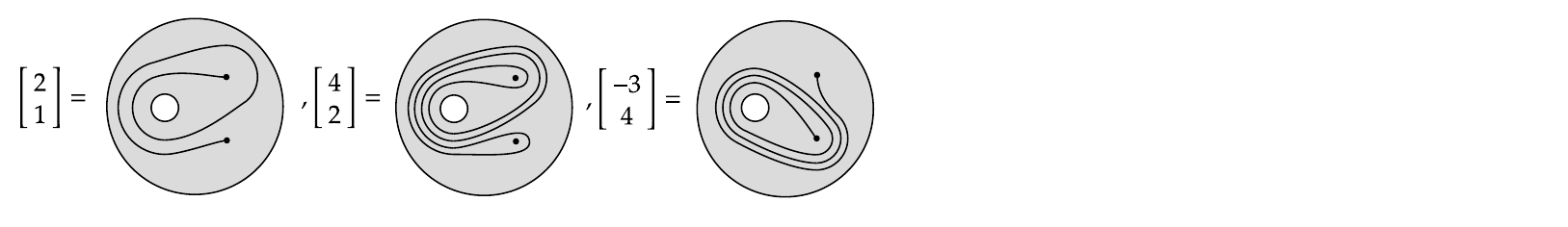}
\end{center}
\end{example}

\begin{remark}
Note that $\twos{n}{k}_g$ is a knot if and only if $ n = k \mod 2$.
\end{remark}

\begin{lemma}\label{lem:sl2z-action}
    Let $\Lambda$ be the subgroup of $\lilmat{a}{b}{c}{d}\in \SL_2\Z$ such that $a$ is odd and $c$ is even (this implies $d$ is also odd). Then there is a well-defined action of $\Lambda$ on curves in $\san$ by
\[
\fourmat{a}{b}{c}{d}\cdot \two{n}{k}_g=\two{c(n+k)/2+dn}{n(2a-c+4b-2d)/2+k(2a-c)/2}_g.
\]
\end{lemma}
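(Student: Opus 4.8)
The plan is to verify directly that the stated formula defines a left action, reducing the group law to a single conjugacy statement in $\SL_2\Z$. First I would record that the hypotheses pin down $\Lambda$ exactly: if $\fourmat{a}{b}{c}{d}\in\SL_2\Z$ has $c$ even, then $ad=1+bc$ is odd, forcing $a,d$ odd, so $\Lambda=\{c\equiv 0\bmod 2\}=\Gamma_0(2)$ and the ``$a$ odd'' clause is automatic. Since the formula is linear in $(n,k)$, I would encode the prospective action of $M=\fourmat{a}{b}{c}{d}$ as $\twos{n}{k}_g\mapsto\twos{n'}{k'}_g$ where $\two{n'}{k'}=\Phi(M)\two{n}{k}$ and
\[
\Phi(M)=\fourmat{d+c/2}{c/2}{a+2b-c/2-d}{a-c/2}.
\]
Well-definedness then splits into two checks. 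The entries of $\Phi(M)$ are integers precisely because $c$ is even, which is exactly where the hypothesis is used; and the map must send a geometric-basis curve to a geometric-basis curve. For the latter, $\det\Phi(M)=ad-bc=1$ shows $\Phi(M)$ is a bijection of $\Z^2$, and a short parity computation (using $a,d$ odd and $n+k$ even) shows $n'\equiv k'\pmod 2$ whenever $n\equiv k\pmod 2$, so knots go to knots and arcs to arcs, consistent with an honest homeomorphism. Here I would also fix the convention for the degenerate indices $n=0$ or $k=0$ (boundary-parallel curves) and for $\twos{-n}{-k}_g$ versus $\twos{n}{k}_g$, so that the target is genuinely a single curve in $\san$.

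The heart of the group-law verification is the observation that $\Phi$ is realized by one fixed conjugation. I would exhibit $P=\fourmat{0}{1}{2}{-1}$ and check by direct multiplication that $\Phi(M)=PMP^{-1}$ for every $M\in\Lambda$; note $\det P=-2$, which is exactly why conjugation by $P$ carries integer matrices to integer matrices only on the subgroup with $c$ even, simultaneously re-deriving the integrality above. Granting this, $\Phi(MN)=P(MN)P^{-1}=(PMP^{-1})(PNP^{-1})=\Phi(M)\Phi(N)$, so $\Phi\colon\Lambda\to\SL_2\Z$ is a group homomorphism; together with $\Phi(I)=I$ (the identity fixes every curve) this yields $(MN)\cdot\twos{n}{k}_g=M\cdot(N\cdot\twos{n}{k}_g)$, i.e.\ a genuine left action. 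The sanity checks $\operatorname{tr}\Phi(M)=a+d=\operatorname{tr}M$ and $\det\Phi(M)=1$ make the existence of such a $P$ unsurprising and guide the search for it.

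Finally, I would connect this formal action to its geometric meaning, which is also where I expect the real work to lie. The group $\Gamma_0(2)$ arises as the image in $\SL_2\Z$ of the mapping class group of $\san$ under the relationship with the torus underlying $\phi$ in Theorem~\ref{thm:hom-to-torus}, and the matrix $P$ records the change between the train-track coordinates $(n,k)$ of Figure~\ref{fig:geometric-basis} and the homological coordinates on which $\SL_2\Z$ acts in the standard way. The main obstacle is not the algebra above but verifying that the stated formula really is the mapping-class action on the drawn curves: one must check it on generators of $\Gamma_0(2)$ (for instance $\fourmat{1}{1}{0}{1}$, $\fourmat{1}{0}{2}{1}$, and $-I$) by tracking how the corresponding Dehn twists transform the train-track weights, carefully handling the parity parameter $\e=n-k\bmod 2$, the count $h=(n-k+\e)/2$, and the boundary-parallel edge cases. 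That bookkeeping, rather than the homomorphism property, is the tedious step; the conjugation trick is what keeps everything else routine.
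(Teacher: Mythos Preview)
Your algebraic reduction is correct and in fact rediscovers the paper's coordinate change: your $P^{-1}=\lilmat{1/2}{1/2}{1}{0}$ is precisely the map $(n,k)\mapsto(\tfrac{n+k}{2},n)$ that the paper uses to identify $\twos{n}{k}_g$ with the $(\tfrac{n+k}{2},n)$-curve on the four-punctured sphere, so your observation $\Phi(M)=PMP^{-1}$ is the matrix form of that identification. The difference lies entirely in how the geometric content is established. You propose verifying on Dehn-twist generators of $\Gamma_0(2)$ that the formula is the mapping-class action on train-track weights; the paper instead invokes the already-known $\SL_2\Z$ action on curves in $\Sigma_{0,4,0}$ (citing \cite{Rhea4puncsphere}) and pulls it across the inclusion $\Sigma_{0,4,0}\hookrightarrow\san$, so no generator-by-generator check is needed. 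The paper then extends to arcs separately, defining $M\cdot\theta=(M\cdot(v_1v_2\theta^2-2))^{arc}$ via the squaring relation of Example~\ref{ex:T2arc}, whereas your parity check shows arcs go to arcs but does not by itself tie the formula for arcs to a homeomorphism. One small correction: the relevant auxiliary surface is the four-punctured sphere rather than the torus, so the route through $\phi$ of Theorem~\ref{thm:hom-to-torus} is not quite the right geometric picture here. Your approach is more self-contained and makes the appearance of $\Gamma_0(2)$ transparent (integrality of $PMP^{-1}$ forces $c$ even); the paper's is shorter because the Dehn-twist bookkeeping has been outsourced.
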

\begin{proof}
In this proof, let $(n,k)$ denote the $(n,k)$-link on the four-punctured sphere. The group $SL_2\Z$ acts on $\Sigma_{0,4,0}$ by left multiplication of $(n,k)$ (see \cite{Rhea4puncsphere}). By the natural inclusion of $\Sigma_{0,4,0}\hookrightarrow \san$, we obtain an injective map of loops in $\Sigma_{0,4,0}$ to loops in $\san$. This map can be chosen so that it sends $(\frac{n+k}{2},n)\mapsto \twos{n}{k}_g$, for all $n,k$ of the same parity. From this, one can calculate that left multiplication by an element of $\Lambda$ on $(\frac{n+k}{2},n)$ by left multiplication induces the action given in the statement on $ \twos{n}{k}_g$.

We can extend this action to arcs in $\san$ as follows. If $\omega\subset \san$ is a simple closed curve bounding a twice punctured disk, let $\omega^{arc}$ be the unique arc $\theta\subset \san$ such that $v_1v_2\theta^2-2=\omega$, which exists by Example \ref{ex:T2arc}. 
If $\theta\subset\san$ is an arc and $M\in \Lambda$, define
\[
    M\cdot \theta=(M\cdot (v_1v_2\theta^2-2))^{arc}.
\]
In other words,  the boundary of a regular neighborhood of $\omega^{arc}$ is $\omega$, and the action of $M \in \SL_2\Z$ should respect this relationship. 
One can check the action on arcs defined this way agrees with the one given in the statement. One can also check that $\lilmat{a}{b}{c}{d}\in \SL_2\Z$ maps arcs to arcs and knots to knots via this action if and only if $a$ is odd and $c$ is even.

This extends to an action of $\SL_2\Z$ on $\SRY(\san)$ given by
    \[
    M\cdot (p(A)\cdot \omega)=p(A^{\det M})\cdot (M\cdot \omega)
    \]
where $\omega$ may be a simple knot or arc, and $p(A)\in \Z[A^{\pm1}]$. 

 \end{proof}

\begin{remark}
    The action of $M\in \Lambda$ either fixes or switches $\del_0$ and $\del_1$, so in particular it always fixes $\del_0+\del_1$ and $\del_0\del_1+(A+A^{-1})^2$.
\end{remark}

\begin{figure}
\centering
    \includegraphics[width=0.8\linewidth]{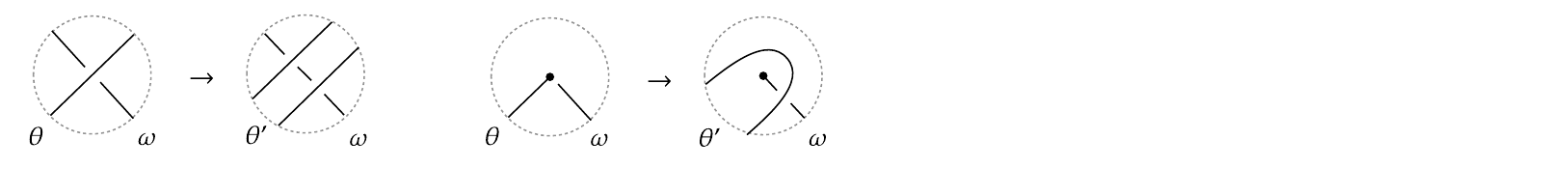}
    \caption{The intersections of $\omega$ and $\theta'$ from the proof of Lemma \ref{lem:intersection-nb}.}
    \label{fig:doubled-crossing-nb}

\end{figure}

\begin{definition}
	Define the geometric intersection number of two curves in $\Sigma_{0,2,2}$ as their minimal number of intersections in the interior of $\Sigma_{0,2,2}$ plus half the number of times they intersect at the punctures.
\end{definition}

If one of the curves is a loop then this is the usual intersection number. If both curves are arcs (each with distinct endpoints), it is $1$ more than their intersection number on the interior of $\Sigma_{0,2,2}$.

\begin{lemma}\label{lem:intersection-nb}
    If $\gcd(n_i,k_i)=1$, the geometric intersection number of $\twos{n_1}{k_1}_g$ and $\twos{n_2}{k_2}_g$ is $|n_1k_2-n_2k_1|$.
\end{lemma}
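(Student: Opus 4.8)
The plan is to show that \emph{both} sides of the asserted identity are invariant under the $\Lambda$-action of Lemma~\ref{lem:sl2z-action}, reduce to a short list of normalized configurations, and there read off the count — translating arcs into closed curves via a doubling argument so that everything rests on the classical intersection formula for slope curves on $\Sigma_{0,4,0}$.

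First I would record that the label-level map $\fourmat{a}{b}{c}{d}\cdot\twos{n}{k}_g$ of Lemma~\ref{lem:sl2z-action} is linear in $(n,k)$, given by a matrix whose determinant, after expanding the two output coordinates, is exactly $ad-bc=1$. Hence $n_1k_2-n_2k_1$, the determinant of the matrix with columns $(n_1,k_1)$ and $(n_2,k_2)$, is preserved in absolute value by $\Lambda$. On the geometric side, the $\Lambda$-action is induced by self-homeomorphisms of $\san$ (coming from the $\SL_2\Z$-action on $\Sigma_{0,4,0}$ through $\Sigma_{0,4,0}\hookrightarrow\san$); these permute the punctures and boundary while preserving interior intersections and puncture incidences, so the geometric intersection number is $\Lambda$-invariant as well. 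It therefore suffices to verify the formula after normalizing one curve.

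Next I would normalize $(n_1,k_1)$. Since $\gcd(n_1,k_1)=1$, its reduction mod $2$ lies in $\{(1,1),(1,0),(0,1)\}$ — the knot class and the two arc classes distinguished in the remark after the definition of the basis. A mod-$2$ computation with the matrix above shows $\Lambda$ fixes $(1,1)$ and mixes $\{(1,0),(0,1)\}$, and using that $\Lambda=\Gamma_0(2)$ (the condition $c$ even forces $a,d$ odd) one checks $\Lambda$ is transitive on primitive vectors within each class. Thus I may take $(n_1,k_1)\in\{(1,1),(1,0)\}$, a standard simple closed curve and a standard arc in Figure~\ref{fig:geometric-basis}. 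For the knot–knot case the computation is then immediate: under the correspondence $\twos{n}{k}_g\leftrightarrow(p,q)=(\tfrac{n+k}{2},n)$ the geometric intersection number is the honest intersection number of slope curves on $\Sigma_{0,4,0}$, which is $2|p_1q_2-p_2q_1|=|n_1k_2-n_2k_1|$ after the substitution.

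The remaining cases involve arcs, and for these I would double. Given an arc $\theta$ representing $\twos{n}{k}_g$ with $n\not\equiv k\pmod 2$, pass to the closed curve $\omega=\partial N(\theta)$ bounding a twice-punctured disk, related to $\theta$ by $v_1v_2\theta^2-2=\omega$ as in Example~\ref{ex:T2arc}; then express $i(\theta,\theta')$ in terms of $i(\omega,\theta')$, where each interior crossing of $\theta$ contributes two crossings of its double and the shared puncture endpoints account precisely for the ``half'' contributions, as depicted in Figure~\ref{fig:doubled-crossing-nb}. Carrying this out recovers the ``interior $+1$'' behavior noted for two arcs with distinct endpoints and reduces the knot–arc and arc–arc cases to closed-curve counts governed by the $\Sigma_{0,4,0}$ slope formula. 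The main obstacle I anticipate is exactly this puncture bookkeeping: one must verify that the doubling takes curves in minimal position to curves in minimal position with the correct multiplicity — neither over- nor under-counting at the shared endpoints — so that the factor of $2$ from doubling and the fractional contribution at the punctures combine to yield $|n_1k_2-n_2k_1|$ on the nose.
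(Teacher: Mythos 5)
Your proposal is correct, and its two load-bearing ingredients are exactly the ones the paper uses: the classical pillowcase formula ($2|N_1K_2-N_2K_1|$ for slope curves coming from the four-holed sphere) and the doubling trick of Figure~\ref{fig:doubled-crossing-nb}, which trades an arc $\theta$ for the loop $v_1v_2\theta^2-2$ bounding its regular neighborhood. What you do differently is organizational: you front-load a $\Lambda$-equivariance argument (both $|n_1k_2-n_2k_1|$ and the geometric intersection number are $\Lambda$-invariant, so normalize $(n_1,k_1)$ to $(1,1)$ or $(1,0)$), whereas the paper runs the three parity cases (loop--loop, arc--loop, arc--arc) directly with no normalization. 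Your reduction is sound --- the label action of Lemma~\ref{lem:sl2z-action} is indeed linear with determinant $ad-bc=1$, and $\Lambda$ is transitive on primitive labels within each mod-$2$ class --- but it buys little: after normalizing the first curve, the second is still an arbitrary primitive $\twos{n_2}{k_2}_g$, so you still need the full-strength pillowcase formula and the doubling argument in every case, which is precisely the paper's whole proof. One step of your reduction also needs more justification than you give it: $\Lambda$-invariance of the geometric intersection number requires the action to be realized by self-homeomorphisms of $\san$ on arcs as well as on loops, but Lemma~\ref{lem:sl2z-action} defines the action on arcs only formally, via $M\cdot\theta=(M\cdot(v_1v_2\theta^2-2))^{arc}$; one must check that this formal action agrees with the geometric one (it does, because elements of $\Lambda$ preserve the pair of punctures filled in under $\Sigma_{0,4,0}\into\san$, hence extend to homeomorphisms of $\san$, and an arc is determined up to isotopy by its double). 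With that patch your argument is a valid, if slightly longer, route to the same endpoint; the minimal-position bookkeeping at the punctures that you flag at the end is exactly what the paper settles by inspection of Figure~\ref{fig:doubled-crossing-nb}.
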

\begin{proof}
 First, consider the case where $\twos{n_1}{k_1}_g$ and  $\twos{n_2}{k_2}_g$ are both loops.  We represent them as loops in the four-punctured sphere, respectively $(\frac{n_1+ k_1}{2},n_1)$ and $(\frac{n_2+ k_2}{2},n_2)$.   Observe we must have $\gcd(\frac{n_i+ k_i}{2},n_i)=1$, and recall by \cite[Lemma~4.1]{Rhea4puncsphere} (see also Section 2.2.5 of \cite{farbmarg}) that the geometric intersection number of loops in the four-punctured sphere $(N_1,K_1)$ and $(N_2,K_2)$ is $2|N_1K_2-N_2K_1|$ when $\gcd(N_i,K_i)=1$. Applying this when $N_i=\frac{n_i+ k_i}{2}$ and $K_i=n_i$ gives the desired equality.

Now suppose one of the curves, say $\twos{n_1}{k_1}_g:=\theta$ is an arc, and the other $\twos{n_2}{k_2}_g:=\omega$ is a loop, then let $\theta'=v_1v_2\twos{n_1}{k_1}_g^2-2=\twos{2n_1}{2k_1}_g$, which is the boundary of a regular neighborhood of $\theta$, as in Example \ref{ex:T2arc}. Then, from Figure \ref{fig:doubled-crossing-nb} (Left), we see the intersection number of $\omega$ and $\theta'$ is twice that of $\theta$ and $\omega$, and, as $\omega$ and $\theta'$ are loops. One can show that if $\twos{n_1}{k_1}_g$ is an arc then $\gcd(n_1+k_1,2n_1)=1$, so we can apply \cite[Lemma~4.1]{Rhea4puncsphere} as before.

If both curves are arcs, we see from Figure \ref{fig:doubled-crossing-nb} that the intersection number of $\omega$ and $\theta$ is half that of $\omega$ and $\theta'$, and we can apply the previous argument.
\end{proof}

\begin{lemma}\label{lem:easy-prod-sum}
Let $\twos{n_1}{k_1}_g$ and $\twos{n_2}{k_2}_g$ be curves such that $n_1k_2-n_2k_1=\pm 1$. Then
\begin{equation}\label{eqn:easy-prod-sum-ka}
    (v_1v_2)^{\delta_{arcs}}\two{n_1}{k_1}_g*\two{n_2}{k_2}_g=A^{\lildet{n_1}{n_2}{k_1}{k_2}}\two{n_1+n_2}{k_1+k_2}_g+A^{-\lildet{n_1}{n_2}{k_1}{k_2}}\two{n_1-n_2}{k_1-k_2}_g+\delta_{arcs}(\del_0+\del_1).
\end{equation}
where $\delta_{arcs}=1$ if $\twos{n_1}{k_1}$ and $\twos{n_2}{k_2}$ are both arcs and 0 otherwise.
\end{lemma}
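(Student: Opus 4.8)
The plan is to verify \eqref{eqn:easy-prod-sum-ka} on a short list of base pairs and then propagate it to every pair with $\lildet{n_1}{n_2}{k_1}{k_2}=\pm 1$ using the $\Lambda$-action of Lemma \ref{lem:sl2z-action}. First I would record that \eqref{eqn:easy-prod-sum-ka} is $\Lambda$-equivariant. Writing $M\cdot \twos{n}{k}_g=\twos{n'}{k'}_g$, the formula in Lemma \ref{lem:sl2z-action} exhibits $(n',k')$ as a \emph{linear} function of $(n,k)$ through an integer matrix $M'$, and a direct computation gives $\det M'=\det M=1$, so $M'\in\SL_2\Z$. Hence the action preserves the determinant $\lildet{n_1}{n_2}{k_1}{k_2}$, sends $\twos{n_1\pm n_2}{k_1\pm k_2}_g$ to $\twos{n_1'\pm n_2'}{k_1'\pm k_2'}_g$, fixes $v_1v_2$ (at worst it interchanges the two punctures), fixes $\del_0+\del_1$ (by the remark following Lemma \ref{lem:sl2z-action}), and leaves the power of $A$ unchanged because $\det M=1$. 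Since $M$ acts by an algebra automorphism and preserves $\delta_{arcs}$, applying $M$ to both sides shows that \eqref{eqn:easy-prod-sum-ka} holds for a given pair if and only if it holds for the entire $\Lambda$-orbit of that pair.

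Next I would cut down to finitely many orbits. Since $\lildet{n_1}{n_2}{k_1}{k_2}=\pm 1$ is odd, the reductions of $(n_1,k_1)$ and $(n_2,k_2)$ modulo $2$ form a basis of $\F_2^2$, so each is one of the three nonzero classes $(1,0),(0,1),(1,1)$; a one-line parity check shows two loops always have even determinant, so the loop-loop case cannot occur and the loop in a mixed pair is always the class $(1,1)$. Reducing $\Lambda$ modulo $\Gamma(2)$, its image acts as the transposition exchanging the two arc-classes $(1,0)\leftrightarrow(0,1)$ and fixing $(1,1)$; tracking cosets then shows $\Lambda$ has exactly three orbits on pairs meeting once, namely arc-arc, (loop, arc), and (arc, loop). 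The opposite orderings, and with them the determinant $-1$ pairs, are related to these by the orientation-reversing involution of $\SRY(\san)$ that reverses stacking order and sends $A\mapsto A^{-1}$ while fixing each simple curve: under it \eqref{eqn:easy-prod-sum-ka} is carried to the corresponding identity for the reversed product, so together with the convention $\twos{-n}{-k}_g=\twos{n}{k}_g$ it suffices to verify \eqref{eqn:easy-prod-sum-ka} for one arc-arc and one loop-arc representative.

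For the base cases I would take representatives built from the generators $\a,\b,\g$ of Corollary \ref{cor:real022pres}. A loop-arc product is governed by the commutation relations \eqref{eqn:tor2}--\eqref{eqn:tor3}, each of which already rewrites an expression of the form $A\,x*y-A^{-1}y*x$ as a single basis curve; this is exactly \eqref{eqn:easy-prod-sum-ka} with $\delta_{arcs}=0$ once the two resulting terms are identified with $\twos{n_1+n_2}{k_1+k_2}_g$ and $\twos{n_1-n_2}{k_1-k_2}_g$. The arc-arc product $\a*\b$ is governed by \eqref{eqn:tor1} together with Example \ref{ex:T2arc}, which resolves the square of an arc and thereby accounts both for the prefactor $(v_1v_2)^{\delta_{arcs}}$ on the left and for the additive term $\del_0+\del_1$ on the right. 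Throughout, Lemma \ref{lem:intersection-nb} guarantees that the two curves meet exactly once, so that a single skein smoothing produces precisely the two curves appearing on the right-hand side of \eqref{eqn:easy-prod-sum-ka}.

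I expect the arc-arc base case to be the main obstacle: it is the only one with $\delta_{arcs}=1$, and one must check that resolving the unique crossing of two arcs at the punctures---using the puncture-skein relation and the puncture relation of Definition \ref{def:RYalgebra}---produces exactly the factor $v_1v_2$ on the left and the correction $\del_0+\del_1$ on the right, with the powers $A^{\pm\lildet{n_1}{n_2}{k_1}{k_2}}$ correctly bookkept through the framing and the puncture-height orderings. Pinning down this single computation, including the sign of the determinant in the exponent, is the delicate point; once it is settled, $\Lambda$-equivariance delivers the general statement.
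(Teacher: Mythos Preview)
Your plan is the paper's own proof: use the $\Lambda$-action of Lemma~\ref{lem:sl2z-action} to reduce \eqref{eqn:easy-prod-sum-ka} to a fixed base pair in each parity type, then verify those by hand. The paper is simply more concrete---it writes down explicit matrices $M_1,M_2\in\Lambda$ (with $\det=\pm1$, the minus sign acting via $A\mapsto A^{-1}$, which is exactly your orientation-reversing involution) that carry an arbitrary pair to $(\g,\a)$ or $(\a,\b)$, thereby proving in one stroke the transitivity you assert by ``tracking cosets''.

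One small correction on the base cases: the commutator relations \eqref{eqn:tor2}--\eqref{eqn:tor3} record $A\,x*y-A^{-1}y*x$, not $x*y$ itself, so they do not by themselves produce the two-term expansion on the right of \eqref{eqn:easy-prod-sum-ka}; the loop--arc base case is rather a single-crossing skein resolution, and the arc--arc base case a two-puncture resolution. The paper likewise leaves both as computations ``by hand''.
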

\begin{proof}
If $\twos{n_1}{k_1}_g$ if a knot and $\twos{n_2}{k_2}_g$ is an arc (resp. both are arcs) the action of $M_1$ (resp. $M_2$) reduces \eqref{eqn:easy-prod-sum-ka} to an identity that can be computed by hand, where
\[
M_1=\begin{bmatrix}
        n_1&-\frac{n_1+k_1}{2}\\
        -2n_2&n_2+k_2
    \end{bmatrix}
    \qquad
    M_2=\begin{bmatrix}
        n_1-n_2& \frac{1}{2}(n_2+k_2-n_1-k_1)\\
        2n_1&-n_1-k_1
    \end{bmatrix}
\]
The case where $\twos{n_1}{k_1}_g$ is an arc and $\twos{n_2}{k_2}_g$ is a knot follows similarly.
\end{proof}

\subsection{Description of pre-image of torus knots} \label{sec:torusknots}
Let ${n\choose k}$ denote the $(n,k)$-torus link in the closed torus $\sto$. 
In this subsection, we construct a basis $\{\twos{n}{k}\}$ for $\SRY(\san)$ such that $\phi(\twos{n}{k})={n\choose k}$, where $\phi:\SRY(\san)\to \S(\sto)\otimes \Z[A^{\pm 1/2}]$ is the homomorphism defined in Corollary \ref{cor:real022pres}. Our proof uses a similar technique to that of Theorem 1 in \cite{FrohmanGelca}. 

\begin{definition}
	Let $d=gcd(n,k)$ and define $\two{n}{k}=\two{n/d}{k/d}_g^d$.
\end{definition}
 Note if $\twos{n/d}{k/d}_g$ is a knot, we have $\twos{n}{k}_g=\twos{n}{k}$. Also, if $\gcd(n,k)=1$, $\twos{n}{k}_g=\twos{n}{k}$.
\begin{remark}
	This basis is the basis proposed in \cite{KaruoPositive} as a positive basis for the Roger-Yang skein algebra.
\end{remark}

First choose the curves representing $x_1,x_2,x_3$ in the torus so that 

\[
\phi(\b)=x_1={1\choose 0}\quad,\quad \phi(\a)=x_2={0\choose 1}\quad\text{,}\quad\phi(\g)=x_3={1\choose 1}.
\]
Note that if $\twos{n_1}{k_1}$ and $\twos{n_2}{k_2}$ do not intersect then $\twos{n_1}{k_1}*\twos{n_2}{k_2}=\twos{n_1+n_2}{k_1+k_2}$.

\begin{theorem}\label{thm:phi-of-nk}
    If $\gcd(n,k)=1$, $\phi(\twos{n}{k})={n\choose k}$.
\end{theorem}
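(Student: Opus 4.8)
The plan is to prove $\phi(\twos{n}{k})={n\choose k}$ for coprime $(n,k)$ by induction, mirroring the recursive structure that the Frohman-Gelca Product-to-Sum formula imposes on torus knots. The base cases are the three generators: by our choice of curves, $\phi(\twos{1}{0})=\phi(\b)=x_1={1\choose 0}$, $\phi(\twos{0}{1})=\phi(\a)=x_2={0\choose 1}$, and $\phi(\twos{1}{1})=\phi(\g)=x_3={1\choose 1}$, so the claim holds there. The inductive step should proceed on the ``complexity'' of the pair $(n,k)$, measured for instance by $|n|+|k|$: given a coprime pair that is not one of the base cases, I would split it as a sum $(n,k)=(n_1,k_1)+(n_2,k_2)$ of two strictly simpler coprime pairs satisfying $n_1k_2-n_2k_1=\pm 1$ (a Stern-Brocot / continued-fraction style decomposition, which is always available for coprime $(n,k)$), apply $\phi$ to the product formula of Lemma~\ref{lem:easy-prod-sum}, and compare with the Frohman-Gelca formula on the torus side.

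The key computation is to apply $\phi$ to equation \eqref{eqn:easy-prod-sum-ka}. Since $\phi(v_i)=1$, the factor $(v_1v_2)^{\delta_{arcs}}$ becomes $1$, and since $\phi(\del_0)=-\phi(\del_1)=A+A^{-1}$, the term $\delta_{arcs}(\del_0+\del_1)$ maps to $0$. Hence, writing $D=\lildet{n_1}{n_2}{k_1}{k_2}=\pm 1$, the image of Lemma~\ref{lem:easy-prod-sum} reads
\[
\phi\!\left(\two{n_1}{k_1}_g\right)*\phi\!\left(\two{n_2}{k_2}_g\right)=A^{D}\,\phi\!\left(\two{n_1+n_2}{k_1+k_2}_g\right)+A^{-D}\,\phi\!\left(\two{n_1-n_2}{k_1-k_2}_g\right).
\]
On the torus side, the Product-to-Sum formula of \cite{FrohmanGelca} gives exactly ${n_1\choose k_1}*{n_2\choose k_2}=A^{\lildet{n_1}{n_2}{k_1}{k_2}}{n_1+n_2\choose k_1+k_2}+A^{-\lildet{n_1}{n_2}{k_1}{k_2}}{n_1-n_2\choose k_1-k_2}$ whenever $n_1k_2-n_2k_1=\pm 1$. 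By the inductive hypothesis the two simpler curves $(n_1,k_1)$ and $(n_2,k_2)$ already satisfy $\phi\twos{n_i}{k_i}={n_i\choose k_i}$, so matching the two displayed identities forces $\phi\twos{n_1+n_2}{k_1+k_2}={n_1+n_2\choose k_1+k_2}$, provided we already control the ``other'' summand $\phi\twos{n_1-n_2}{k_1-k_2}$ by induction as well. This is where the inductive bookkeeping must be set up carefully: one must choose the decomposition so that $(n_1-n_2,k_1-k_2)$ is also strictly simpler (or is itself a known base case), which again is guaranteed by the Stern-Brocot structure, and then solve for the target summand $A^{D}\phi\twos{n_1+n_2}{k_1+k_2}$ algebraically.

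The main obstacle I anticipate is the bookkeeping around coprimality and parity rather than any single hard computation. Lemma~\ref{lem:easy-prod-sum} and Lemma~\ref{lem:intersection-nb} both carry $\gcd$ hypotheses, so I must verify at each step that the chosen summands $(n_i,k_i)$ remain coprime (so that $\twos{n_i}{k_i}_g=\twos{n_i}{k_i}$ and Lemma~\ref{lem:easy-prod-sum} applies directly without reduced powers), and I must track the knot-versus-arc dichotomy (controlled by the parity condition $n\equiv k \pmod 2$) to be sure the $\delta_{arcs}$ terms genuinely vanish under $\phi$ and do not contribute. A secondary subtlety is ensuring the induction is well-founded: the natural ordering by $|n|+|k|$ together with the $\SL_2\Z$-type action from Lemma~\ref{lem:sl2z-action} lets one reduce any coprime pair toward the generators, but one should confirm that the determinant-one splitting always decreases this measure and that both $(n_1+n_2,k_1+k_2)$ and $(n_1-n_2,k_1-k_2)$ are handled consistently. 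Once the decomposition scheme is fixed, each individual comparison is a routine substitution of $\phi(v_i)=1$ and $\phi(\del_0)+\phi(\del_1)=0$ into the two product formulas.
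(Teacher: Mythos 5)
Your proposal is correct and is essentially the paper's own argument: both proceed by induction on the complexity of the coprime pair, decompose $(N,K)$ as a sum of two coprime pairs with determinant $\pm 1$, apply $\phi$ to Lemma~\ref{lem:easy-prod-sum} (where $\phi(v_i)=1$ and $\phi(\del_0+\del_1)=0$ kill the arc correction terms), and match against the Frohman--Gelca product-to-sum formula to solve for the new term. The only cosmetic differences are that the paper orders pairs lexicographically rather than by $|n|+|k|$, cites \cite[Lemma~1]{FrohmanGelca} for the unimodular splitting instead of invoking the Stern--Brocot structure, and adds $\twos{2}{1}$, $\twos{1}{2}$ as hand-checked base cases since that lemma requires $N\geq 3$.
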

\begin{proof}
The proof proceeds by induction on $\twos{n}{k}$ with respect to lexicographical order denoted $<$ defined by $\twos{n_1}{k_1}< \twos{n_2}{k_2}$ if $|n_1|< |n_2|$, or $|n_1|=|n_2|$ and $|k_1|<|k_2|$. For the base cases, we have by the definition of the $\phi$ that
\begin{align*}
    \phi(\a)=\phi(\twos{0}{1})={0\choose 1},\quad \phi(\g)=\phi(\twos{1}{0})={1\choose 0},\quad \phi(\g)=\phi(\twos{1}{1})={1\choose 1}.
\end{align*}
Also, one can calculate $\phi(\twos{2}{1})={2\choose 1}$ and $\phi(\twos{1}{2})={1\choose 2}$.

Let $N,K\in \Z$ be relatively prime and assume that for all $\twos{n}{k}\leq \twos{N}{K}$ if $\gcd(n,k)=1$ then $\phi(\twos{n}{k})={n\choose k}$.

We consider only the case where $0<K<N$, as the other case is similar. By \cite[Lemma~1]{FrohmanGelca}, for any $(N,K)$ such that $N\geq 3$, $0<K<N$ and $\gcd(N,K)=1$, there exist $u,v,w,z$ such that $uz-vw=\pm 1$, $u+w=N$, $v+z=K$, $w\in \{1,\ldots,N-1\}$, $u\in \{1,\ldots,N-2\}$, and $v,z\in \Z_{>0}$. It follows that the pairs $(u,v),(w,z),$ and $(u-w,z-v)$ are each relatively prime.

As $uz-vw=\pm 1$, it follows from Lemma \ref{lem:intersection-nb} that one of $\twos{u}{v}$ and $\twos{w}{z}$ must be an arc. If exactly one is an arc, by Lemma \ref{lem:easy-prod-sum},
\[
\two{u}{v}*\two{w}{z}=A^{\lildet{u}{w}{v}{z}}\two{N}{K}+A^{-\lildet{u}{w}{v}{z}}\two{u-w}{z-v}
\]
Because $(u,v),(w,z),$ and $(u-w,z-v)$ are each relatively prime, and $|u|,|w|,|u-w|<|N|$, we can apply the induction hypothesis, which tells us that when we apply $\phi$ to the above equation, we get
\[
{u\choose v}*{w\choose z}=A^{\lildet{u}{w}{v}{z}}\phi\left(\two{N}{K}\right)+A^{-\lildet{u}{w}{v}{z}}{u-w\choose z-v}.
\]
The product-to-sum on the torus from \cite{FrohmanGelca} tells us ${u\choose v}*{w\choose z}=A^{\lildet{u}{w}{v}{z}}{N\choose K}+A^{-\lildet{u}{w}{v}{z}}{u-w\choose z-v}$. Hence, $\phi(\twos{N}{K})={N\choose K}$.

Now we consider the case where both $\twos{u}{v}$ and $\twos{w}{z}$ are arcs. By Lemma \ref{lem:easy-prod-sum},
\[
v_1v_2\two{u}{v}*\two{w}{z}=A^{\lildet{u}{w}{v}{z}}\two{N}{K}+A^{-\lildet{u}{w}{v}{z}}\two{u-w}{v-z}+\del_0+\del_1.
\]
As before, we can apply the induction hypothesis, which tells us that when we apply $\phi$ to the equation above, we get
\[
{u\choose v}*{w\choose z}=A^{\lildet{u}{w}{v}{z}}\phi\left(\two{N}{K}\right)+A^{-\lildet{u}{w}{v}{z}}{u-w\choose z-v}
\]
which, by the product-to-sum on the torus, means $\phi(\twos{N}{K})={N\choose K}$.
\end{proof}

\begin{corollary}
    If $\gcd(n,k)=d$, $\phi(\twos{n}{k})=\phi(\twos{n/d}{k/d})^d={n\choose k}$
\end{corollary}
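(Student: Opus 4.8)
The plan is to prove the corollary by combining the definition of $\twos{n}{k}$ with the already-established Theorem \ref{thm:phi-of-nk} and the multiplicativity of torus links in $\S(\sto)$. The key structural fact is that by definition $\twos{n}{k} = \twos{n/d}{k/d}_g^d$ where $d = \gcd(n,k)$, and since the primitive pair $(n/d, k/d)$ is coprime, the remark following the definition gives $\twos{n/d}{k/d}_g = \twos{n/d}{k/d}$. Thus $\twos{n}{k} = \twos{n/d}{k/d}^d$ as an element of $\SRY(\san)$.

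First I would apply the algebra homomorphism $\phi$ to this equation. Since $\phi$ is a homomorphism, it respects products (and hence powers under the stacking operation $*$), so $\phi(\twos{n}{k}) = \phi\!\left(\twos{n/d}{k/d}\right)^{\!d}$. Next I would invoke Theorem \ref{thm:phi-of-nk}: because $\gcd(n/d, k/d) = 1$, that theorem gives $\phi\!\left(\twos{n/d}{k/d}\right) = {n/d \choose k/d}$. Substituting yields $\phi(\twos{n}{k}) = {n/d \choose k/d}^{d}$, which is exactly the middle expression in the corollary statement.

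The remaining step is to verify that ${n/d \choose k/d}^{d} = {n \choose k}$ in $\S(\sto)$. This is precisely the defining property of the $(n,k)$-torus link as the $d$-fold parallel of the primitive $(n/d, k/d)$-torus knot: since a primitive torus curve is simple (no self-intersections), its $d$-th power under stacking is just $d$ parallel disjoint copies, which by the observation preceding Theorem \ref{thm:phi-of-nk} (that disjoint curves multiply additively, $\twos{n_1}{k_1}*\twos{n_2}{k_2}=\twos{n_1+n_2}{k_1+k_2}$ when they do not intersect, and the analogous statement on the torus) equals ${n \choose k}$. I expect this final identification to be the only point requiring care: one should confirm that $d$ disjoint parallel copies of ${n/d \choose k/d}$ genuinely represent the $(n,k)$-torus link in the standard indexing convention, and that no Kauffman resolutions are introduced since the parallel copies are non-crossing. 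Everything else is a direct chain of substitutions, so the proof is short.
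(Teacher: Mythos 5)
Your proposal is correct and is exactly the argument the paper intends: the corollary is stated without proof precisely because it follows immediately from the definition $\twos{n}{k}=\twos{n/d}{k/d}_g^d$, the remark identifying $\twos{n/d}{k/d}_g$ with $\twos{n/d}{k/d}$ in the coprime case, the homomorphism property of $\phi$, Theorem~\ref{thm:phi-of-nk}, and the fact that the $(n,k)$-torus link is $d$ disjoint parallel copies of the primitive $(n/d,k/d)$-curve, so ${n/d\choose k/d}^d={n\choose k}$ in $\S(\sto)$. Your flagged point of care (that the $d$-fold stacked power of a simple curve isotopes to disjoint parallel copies with no crossings to resolve) is the right thing to check, and it holds since simple closed curves in an oriented surface can be pushed off themselves.
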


\subsection{Structural constants for infinite families of products}  \label{sec:strconst}

In this subsection, we make progress towards a product-to-sum formula for $\SRY(\san)$, including giving closed formulas for products of certain infinite families of curves with the arc $\twos{0}{1}$. These formulas involve coefficients that are positive multiples of quantum integers and hence provide evidence that the bracelets basis proposed in \cite{KaruoPositive} is a positive basis for $\SRY(\san)$. To determine these formulas, we use the homomorphism $\phi:\SRY(\san)\to \S(\sto)$ to ``pull back" the product-to-sum on the torus given by \cite{FrohmanGelca}, following the arguments in \cite{SikiFast}.

Throughout this subsection, we consider the specialization of the Roger-Yang skein algebra from \cite{KaruoPositive}, given by setting $v_1=v_2=1$. Note that $\phi$ of Theorem \ref{thm:hom-to-torus} descends to a well-defined homomorphism on this specialization.

\begin{definition}
    Let $T_n$ be the $n$th Chebyshev and define
    \[
    \two{n}{k}_T=T_{\gcd(n,k)}\two{\frac{n}{{\gcd(n,k)}}}{\frac{k}{\gcd(n,k)}}
    \]
\end{definition}

It is important to note that, due to the definition of $\twos{n}{k}$, if $\twos{n}{k}^{arc}$ exists (that is, if $n/2$ and $k/2$ are integers with different parity) then 
\[
\two{n}{k}_T=\two{n/2}{k/2}^2-2=v_1^{-1}v_2^{-1}\two{n}{k}+2(v_1^{-1}v_2^{-1}-1)=\two{n}{k}
\]
because, again, we set $v_1=v_2=1$.

\begin{definition}[\protect{\cite[Def.~3.5]{SikiFast}}]
    The \textit{Frohman-Gelca discrepancy} of $\twos{n_1}{k_1}_T* \twos{n_2}{k_2}_T$ is
    \[
    D\begin{bmatrix}
        n_1&n_2\\
        k_1&k_2
    \end{bmatrix}=\two{n_1}{k_1}_T* \two{n_2}{k_2}_T-A^{\left|\begin{smallmatrix}
        n_1&n_2\\
        k_1&k_2
    \end{smallmatrix}\right|}\two{n_1+n_2}{k_1+k_2}_T-A^{-\left|\begin{smallmatrix}
        n_1&n_2\\
        k_1&k_2
    \end{smallmatrix}\right|}\two{n_1-n_2}{k_1-k_2}_T
    \]
\end{definition}

\begin{theorem}[\protect{\cite[Thm.~3.1]{SikiFast}}]
    For $n_i,k_i\in \Z$,
    \begin{align*}
    &\two{n_1}{k_1}_T*D\begin{bmatrix}
        n_2&n_3\\
        k_2&k_3
    \end{bmatrix}+A^{\left|\begin{smallmatrix}
        n_2&n_3\\
        k_2&k_3
    \end{smallmatrix}\right|}D\begin{bmatrix}
        n_1&n_2+n_3\\
        k_1&k_2+k_3
    \end{bmatrix}+A^{-\left|\begin{smallmatrix}
        n_2&n_3\\
        k_2&k_3
    \end{smallmatrix}\right|}D\begin{bmatrix}
        n_1&n_2-n_3\\
        k_1&k_2-k_3
    \end{bmatrix}\\
    &\qquad\qquad=D\begin{bmatrix}
        n_1&n_2\\
        k_1&k_2
    \end{bmatrix}*\two{n_3}{k_3}_T+A^{\left|\begin{smallmatrix}
        n_1&n_2\\
        k_1&k_2
    \end{smallmatrix}\right|}D\begin{bmatrix}
        n_1+n_2&n_3\\
        k_1+k_2&k_3
    \end{bmatrix}+A^{-\left|\begin{smallmatrix}
        n_1&n_2\\
        k_1&k_2
    \end{smallmatrix}\right|}D\begin{bmatrix}
        n_1-n_2&n_3\\
        k_1-k_2&k_3
    \end{bmatrix}.
    \end{align*}
\end{theorem}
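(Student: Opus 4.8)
The plan is to derive the identity from associativity of the product in $\SRY(\san)$, subtracting off the corresponding identity that the product-to-sum expansion already satisfies on the torus. Abbreviate the three basis elements as $a=\twos{n_1}{k_1}_T$, $b=\twos{n_2}{k_2}_T$, $c=\twos{n_3}{k_3}_T$, and for $x=\twos{p}{q}_T$, $y=\twos{r}{s}_T$ write $\mathrm{PS}(x,y)=A^{m}\twos{p+r}{q+s}_T+A^{-m}\twos{p-r}{q-s}_T$ with $m=\lildet{p}{r}{q}{s}$, so that by the definition of the Frohman--Gelca discrepancy every $T$-basis product satisfies $x*y=\mathrm{PS}(x,y)+D[\,\cdot\,]$. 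First I would expand the triple product $(a*b)*c$ by resolving $a*b=\mathrm{PS}(a,b)+D\begin{bmatrix}n_1&n_2\\k_1&k_2\end{bmatrix}$ and then resolving each of the two $T$-terms of $\mathrm{PS}(a,b)$ against $c$. This produces a ``pure-$\mathrm{PS}$'' part, namely a $\Z[A^{\pm1}]$-combination of the iterated symbols $\twos{\pm n_1\pm n_2\pm n_3}{\pm k_1\pm k_2\pm k_3}_T$, together with exactly the three discrepancy terms on the right-hand side of the statement (the determinant in the exponents being $\lildet{n_1}{n_2}{k_1}{k_2}$). Expanding $a*(b*c)$ the same way produces its own pure-$\mathrm{PS}$ part together with the three discrepancy terms on the left-hand side (with exponent determinant $\lildet{n_2}{n_3}{k_2}{k_3}$).

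Second, I would invoke associativity of $\SRY(\san)$, namely $(a*b)*c=a*(b*c)$. Once both sides are expanded as above, the asserted recursion follows provided the two pure-$\mathrm{PS}$ parts cancel. This cancellation is the crux, and is where I expect essentially all of the genuine work to lie. To establish it I would lift the corresponding statement from the torus: the pure-$\mathrm{PS}$ part of each bracketing is, symbol-for-symbol, the expansion of the triple torus product $\binom{n_1}{k_1}\binom{n_2}{k_2}\binom{n_3}{k_3}$ under the two ways of associating it, computed with the exact Frohman--Gelca product-to-sum formula on $\S(\sto)$ (where discrepancies vanish identically). These two torus expansions agree because multiplication in $\S(\sto)$ is associative; and since the threaded torus knots $\binom{p}{q}_T$ form a basis of $\S(\sto)$, this agreement is precisely an identity between the $\Z[A^{\pm1}]$-coefficients attached to each symbol.

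It then remains to transport this coefficient identity back to $\SRY(\san)$. Because $\phi(\twos{p}{q}_T)=\binom{p}{q}_T$ by Theorem~\ref{thm:phi-of-nk} and its corollary (using that a ring homomorphism commutes with Chebyshev threading), and because the only symbol identifications involved, namely $\twos{p}{q}_T=\twos{-p}{-q}_T$ reflecting the unorientedness of the curves, hold in $\SRY(\san)$ exactly as they do in $\S(\sto)$, the coefficient identity lifts verbatim to an identity among the corresponding elements of $\SRY(\san)$. Hence the two pure-$\mathrm{PS}$ parts coincide in $\SRY(\san)$ and cancel, leaving exactly the claimed recursion. I would emphasize that merely applying $\phi$ does not suffice here: since $\phi$ annihilates every discrepancy, it only sends the recursion to the trivial identity $0=0$, so the basis/lifting argument for the pure-$\mathrm{PS}$ part is genuinely needed rather than a direct transport of the whole equation. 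The bookkeeping of which four signed symbols appear in each bracketing, and checking that collisions among them are handled consistently on both sides, is the tedious but routine remainder.
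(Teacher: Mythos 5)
Your proposal is correct, but it necessarily differs from the paper, because the paper contains no proof of this statement: it is imported verbatim as a citation to \cite[Thm.~3.1]{SikiFast} (together with the discrepancy of Def.~3.5 there), and is simply invoked for $\SRY(\san)$. Your skeleton --- expand $(a*b)*c$ and $a*(b*c)$ using $x*y=\mathrm{PS}(x,y)+D[\,\cdot\,]$, invoke associativity, and cancel the pure-$\mathrm{PS}$ parts --- is exactly the right mechanism. However, the step you single out as the crux, and for which you deploy associativity of $\S(\sto)$, the Frohman--Gelca basis, and a transport of coefficients back to $\SRY(\san)$, is in fact trivial: the two pure-$\mathrm{PS}$ parts are \emph{literally identical} expressions, term by term, by bilinearity of the determinant. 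Writing $d_{ij}=\lildet{n_i}{n_j}{k_i}{k_j}$, both bracketings produce the same four symbols $\twos{n_1\pm n_2\pm n_3}{k_1\pm k_2\pm k_3}_T$ with the same exponents; for instance the coefficient of $\twos{n_1+n_2-n_3}{k_1+k_2-k_3}_T$ is $A^{-d_{23}}\cdot A^{d_{12}-d_{13}}$ in one expansion and $A^{d_{12}}\cdot A^{-(d_{13}+d_{23})}$ in the other, and likewise for the remaining three. So no basis argument, no lifting, and no collision bookkeeping is needed; the identity is formal and holds in any algebra equipped with a symbol-indexed family and a discrepancy defined this way --- which is also the real reason the theorem, proved in \cite{SikiFast} in a different setting, applies verbatim to $\SRY(\san)$. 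Your side observation that applying $\phi$ to the whole identity only yields $0=0$ is correct and worth making; and your torus detour, while heavier than necessary, is logically sound (the $\pm$-identification of symbols and the $(0,0)$ edge case do work out), so the proposal stands as a valid, if roundabout, proof.
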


We will also use the following corollary.
\begin{corollary}[\protect{\cite[Cor.~3.3]{SikiFast}}]\label{cor:sikiwang-3.3}
    For $p,q\in \Z$,
    \begin{align*}
        D\begin{bmatrix}
        p+1&0\\
        q&1
    \end{bmatrix}
    &=A^{-q}\two{1}{0}*D\begin{bmatrix}
        p&0\\
        q&1
    \end{bmatrix}-A^{-2q}\begin{bmatrix}
        p-1&0\\
        q&1
    \end{bmatrix}\\
    &\quad +A^{-p-q}D\begin{bmatrix}
        1&p\\
        0&q-1
    \end{bmatrix}-A^{-q}D\begin{bmatrix}
        1&p\\
        0&q
    \end{bmatrix}*\two{0}{1}+A^{p-q}D\begin{bmatrix}
        1&p\\
        0&q+1
    \end{bmatrix}
    \end{align*}
\end{corollary}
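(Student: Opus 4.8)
The plan is to derive this identity from the discrepancy functional equation of \cite[Thm.~3.1]{SikiFast} (stated above) by a single judicious specialization of its six parameters, combined with an elementary symmetry of the discrepancy under negating a column. First I would record that symmetry. From the definition of the Frohman--Gelca discrepancy and the fact that the unoriented curves satisfy $\two{-n}{-k}_T=\two{n}{k}_T$, replacing the first column $(n_1,k_1)$ by $(-n_1,-k_1)$ sends the determinant $|M|$ to $-|M|$ and interchanges the ``sum'' and ``difference'' terms in the definition of $D$, leaving $D$ unchanged; thus
\[
D\begin{bmatrix} -n_1 & n_2 \\ -k_1 & k_2 \end{bmatrix}=D\begin{bmatrix} n_1 & n_2 \\ k_1 & k_2 \end{bmatrix},
\]
and likewise for the second column.

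Next, in the functional equation I would take $(n_1,k_1)=(1,0)$, $(n_2,k_2)=(p,q)$, and $(n_3,k_3)=(0,1)$. Since $\gcd(1,0)=\gcd(0,1)=1$ we have $\two{1}{0}_T=\two{1}{0}$ and $\two{0}{1}_T=\two{0}{1}$, and the two determinants occurring in the theorem are $\lildet{p}{0}{q}{1}=p$ and $\lildet{1}{p}{0}{q}=q$. Substituting, the functional equation becomes
\[
\two{1}{0}*D\begin{bmatrix} p & 0 \\ q & 1 \end{bmatrix}+A^{p}D\begin{bmatrix} 1 & p \\ 0 & q+1 \end{bmatrix}+A^{-p}D\begin{bmatrix} 1 & p \\ 0 & q-1 \end{bmatrix}
\]
\[
=D\begin{bmatrix} 1 & p \\ 0 & q \end{bmatrix}*\two{0}{1}+A^{q}D\begin{bmatrix} p+1 & 0 \\ q & 1 \end{bmatrix}+A^{-q}D\begin{bmatrix} 1-p & 0 \\ -q & 1 \end{bmatrix}.
\]

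The term I want is $A^{q}D\begin{bmatrix} p+1 & 0 \\ q & 1 \end{bmatrix}$ on the right. Isolating it and multiplying through by $A^{-q}$ produces, term by term, exactly the five summands in the statement, except that the last one emerges as $-A^{-2q}D\begin{bmatrix} 1-p & 0 \\ -q & 1 \end{bmatrix}$; applying the column-negation symmetry above rewrites this as $-A^{-2q}D\begin{bmatrix} p-1 & 0 \\ q & 1 \end{bmatrix}$, matching the corollary (whose second summand should be read as $D\begin{bmatrix} p-1 & 0 \\ q & 1 \end{bmatrix}$). The computation is pure bookkeeping once the parameters are fixed; the only genuine content is choosing the assignment $(1,0),(p,q),(0,1)$ so that a single application of the functional equation generates precisely the products $\two{1}{0}*D[\cdots]$ and $D[\cdots]*\two{0}{1}$ together with the four bare discrepancies in the statement, and then recognizing the leftover term via the symmetry. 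I expect no serious obstacle here: the only points requiring care are the sign bookkeeping on the determinants and justifying the column-negation invariance of $D$ directly from its definition.
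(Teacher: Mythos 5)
Your proof is correct and takes the expected route: the paper itself contains no proof of this statement (it is quoted from \cite[Cor.~3.3]{SikiFast}), and the derivation there is exactly yours --- specialize the functional equation of \cite[Thm.~3.1]{SikiFast} at $(n_1,k_1)=(1,0)$, $(n_2,k_2)=(p,q)$, $(n_3,k_3)=(0,1)$, solve for $A^{q}D\fourmat{p+1}{0}{q}{1}$, and convert the leftover term $A^{-2q}D\fourmat{1-p}{0}{-q}{1}$ into $A^{-2q}D\fourmat{p-1}{0}{q}{1}$ by the column-negation invariance of $D$, which follows from the definition together with the identification $\two{-n}{-k}_T=\two{n}{k}_T$ of unoriented curves. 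Your reading of the second summand as $-A^{-2q}D\fourmat{p-1}{0}{q}{1}$ (the missing $D$ being a typo) is also right, as confirmed by how the corollary is invoked with $q=0$ and $q=1$ in the proofs of Propositions \ref{prop:discrep-p001} and \ref{prop:discrep-p011}.
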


\begin{lemma}\label{lem:easy-discrep}
    If $|n_1k_2-k_1n_2|=0$, $D\fourmat{n_1}{k_1}{n_2}{k_2}=0$. If $|n_1k_2-k_1n_2|=1$,
    \[
    D\lilmat{n_1}{k_1}{n_2}{k_2}=\begin{cases}
        \del_0+\del_1&\text{both }\twos{n_1}{k_1},\twos{n_2}{k_2}\text{ are arcs}\\
        0&\text{otherwise}
    \end{cases}
    \]
\end{lemma}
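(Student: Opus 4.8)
The plan is to split into the two cases of the statement and, in each, to untangle the three families $\twos{\cdot}{\cdot}$, $\twos{\cdot}{\cdot}_g$, and $\twos{\cdot}{\cdot}_T$ under the standing specialization $v_1=v_2=1$, so that each case collapses onto an identity already in hand. Write $\Delta=n_1k_2-k_1n_2$ for the relevant determinant. The first thing I would record is the divisibility observation that $\gcd(n_i,k_i)\mid \Delta$ for each $i$: consequently, when $|\Delta|=1$ both input vectors are primitive, whereas when $\Delta=0$ the two vectors are parallel. These two structural facts drive the two cases.

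For the case $|\Delta|=1$, first I would note $\gcd(n_i,k_i)=1$ for $i=1,2$, and also that $n_1(k_1\pm k_2)-k_1(n_1\pm n_2)=\pm\Delta$, so that $\gcd(n_1\pm n_2,\,k_1\pm k_2)=1$ as well. Hence all four curves occurring in $D$ are primitive, and for each of them $\twos{\cdot}{\cdot}_T=T_1(\twos{\cdot}{\cdot})=\twos{\cdot}{\cdot}=\twos{\cdot}{\cdot}_g$. This rewrites $D$ entirely in the geometric basis, at which point Lemma~\ref{lem:easy-prod-sum} applies verbatim, since its hypothesis $\Delta=\pm1$ is met. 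After setting $v_1=v_2=1$, that lemma reads $\twos{n_1}{k_1}_g*\twos{n_2}{k_2}_g=A^{\Delta}\twos{n_1+n_2}{k_1+k_2}_g+A^{-\Delta}\twos{n_1-n_2}{k_1-k_2}_g+\delta_{arcs}(\del_0+\del_1)$, so $D=\delta_{arcs}(\del_0+\del_1)$. This is exactly the claimed dichotomy, because $\delta_{arcs}=1$ precisely when both $\twos{n_1}{k_1}$ and $\twos{n_2}{k_2}$ are arcs.

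For the case $\Delta=0$, the vectors are integer multiples $(n_i,k_i)=a_i(p,q)$ of a common primitive vector $(p,q)$. Setting $X=\twos{p}{q}$, a single simple loop or arc (with $\twos{-p}{-q}=\twos{p}{q}$ as unoriented curves), the definition of the $T$-basis gives $\twos{n_i}{k_i}_T=T_{|a_i|}(X)$ and likewise $\twos{n_1\pm n_2}{k_1\pm k_2}_T=T_{a_1\pm a_2}(X)$. Since $A^{\Delta}=1$, the vanishing of $D$ is then just the Chebyshev product-to-sum identity $T_{a_1}(X)\,T_{a_2}(X)=T_{a_1+a_2}(X)+T_{a_1-a_2}(X)$. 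To transfer this polynomial identity into $\SRY(\san)$, I would argue that parallel copies of the single curve $X$ are disjoint (cf.\ the observation preceding Theorem~\ref{thm:phi-of-nk}), so they commute under stacking; thus $X$ generates a commutative subalgebra onto which the polynomial relation $T_aT_b=T_{a+b}+T_{a-b}$ pushes forward, giving $D=0$.

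I expect the main obstacle to be the bookkeeping at the interface of the three bases, and in particular the arc case of $\Delta=0$: one must verify that threading an \emph{arc} $X$ by Chebyshev polynomials generates a genuinely commutative polynomial subalgebra in which $T_aT_b=T_{a+b}+T_{a-b}$ holds. The subtlety is the resolution of $X^2$ from Example~\ref{ex:T2arc} together with the $v_1v_2$ normalization — both of which collapse harmlessly once $v_1=v_2=1$ — and the need to confirm that the indexing $T_{|a_i|}(X)=\twos{n_i}{k_i}_T$, including signs, is consistent with the $\gcd$-based definition of the $T$-basis. By contrast, once Lemma~\ref{lem:easy-prod-sum} is invoked the case $|\Delta|=1$ is essentially immediate, so the genuine work is confined to justifying the commutativity and threading claims in the parallel case.
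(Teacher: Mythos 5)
Your proof is correct and takes essentially the same route as the paper, whose entire proof is the one line that the lemma is an immediate consequence of Lemma~\ref{lem:easy-prod-sum}: your $|\Delta|=1$ case is exactly that reduction (primitivity of all four vectors, then the product-to-sum formula with $v_1=v_2=1$). Your handling of the $\Delta=0$ case --- via the Chebyshev identity $T_{a_1}T_{a_2}=T_{a_1+a_2}+T_{a_1-a_2}$ in the commutative subalgebra generated by the single primitive curve --- is the natural way to fill in a detail the paper's one-line proof leaves implicit, so no substantive divergence.
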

\begin{proof}
    This is an immediate consequence of Lemma \ref{lem:easy-prod-sum}.
\end{proof}

\begin{definition}
    Let $[n]_q=\frac{q^{n}-q^{-n}}{q-q^{-1}}=q^{1-n}+q^{3-n}+\cdots+q^{n-3} +q^{n-1}$ be a quantum integer.
\end{definition}

\begin{definition}
    To simplify the formulas, we use normalized Chebyshev polynomials $\overline{T}_n(x)$, defined by $\overline{T}_0(x)=1$ and for all $n\geq 1$, $\overline{T}_n(x)=T_n(x)$.
\end{definition}

\begin{proposition}\label{prop:discrep-p001}
Let $p\geq 1$ and $\epsilon\in \{0,1\}$ such that $\epsilon\equiv p\mod 2$.
    \[
    D\fourmat{p+1}{0}{0}{1}=(\del_0+\del_1)(\overline{T}_p(\two{1}{0})+[3]_A\cdot \overline{T}_{p-2}(\two{1}{0})+[5]_A\cdot \overline{T}_{p-4}(\two{1}{0})+\cdots+[2\lfloor p/2\rfloor+1]_A\cdot \overline{T}_{\epsilon}(\two{1}{0}))
    \]
\end{proposition}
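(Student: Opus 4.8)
The plan is to collapse the Siki--Wang recursion of Corollary~\ref{cor:sikiwang-3.3} onto the single family $d_p := D\fourmat{p+1}{0}{0}{1}$ and then verify the claimed closed form by induction. Writing $x := \twos{1}{0}$ and specializing Corollary~\ref{cor:sikiwang-3.3} at $q=0$ gives
\[
d_p = x*d_{p-1} - d_{p-2} + A^{-p}\,D\fourmat{1}{p}{0}{-1} - \big(D\fourmat{1}{p}{0}{0}\big)*\twos{0}{1} + A^{p}\,D\fourmat{1}{p}{0}{1},
\]
since $d_{p-1}=D\fourmat{p}{0}{0}{1}$ and $d_{p-2}=D\fourmat{p-1}{0}{0}{1}$. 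Each of the three auxiliary discrepancies is handled by Lemma~\ref{lem:easy-discrep}: the columns of $D\fourmat{1}{p}{0}{0}$ are the pairs $\twos{1}{0},\twos{p}{0}$ with determinant $0$, so that term vanishes, while $D\fourmat{1}{p}{0}{-1}$ and $D\fourmat{1}{p}{0}{1}$ each have determinant $1$ and hence equal $\del_0+\del_1$ exactly when both $\twos{1}{0}$ and $\twos{p}{\pm1}$ are arcs and $0$ otherwise. As $\twos{1}{0}$ is always an arc and $\twos{p}{\pm1}$ is an arc precisely when $p$ is even (recall $\twos{n}{k}_g$ is a knot iff $n\equiv k\bmod 2$), I obtain the clean two-term recursion
\[
d_p = x*d_{p-1} - d_{p-2} + \eta_p\,(A^p+A^{-p})(\del_0+\del_1),
\]
where $\eta_p=1$ for $p$ even and $\eta_p=0$ for $p$ odd. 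The base data come from Lemma~\ref{lem:easy-discrep} as well: $d_0=D\fourmat{1}{0}{0}{1}=\del_0+\del_1$ (determinant $1$, two arcs) and $d_{-1}=D\fourmat{0}{0}{0}{1}=0$ (determinant $0$).

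It then remains to show that $S_p := \sum_{j=0}^{\lfloor p/2\rfloor}[2j+1]_A\,\overline{T}_{p-2j}(x)$, so that the Proposition reads $d_p=(\del_0+\del_1)S_p$, satisfies the same recursion $S_p = xS_{p-1}-S_{p-2}+\eta_p(A^p+A^{-p})$ with conventions $S_0=1$, $S_{-1}=0$. Since $d_p$ lives in the commutative subalgebra generated by the single curve $x=\twos{1}{0}$, the product $x*d_{p-1}$ is ordinary polynomial multiplication and the scalar three-term relation $x\overline{T}_n=\overline{T}_{n+1}+\overline{T}_{n-1}$ (for $n\ge1$) applies. Multiplying the sum for $S_{p-1}$ by $x$ and applying this relation, the lower halves reproduce and cancel against $S_{p-2}$ while the upper halves rebuild $S_p$, up to boundary terms from the top index and from the one place where the normalization $\overline{T}_0=1$ differs from $T_0=2$. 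I expect it cleanest to run the reduction with the ordinary $T_n$ first and then insert the correction $S_p=\tilde S_p-\eta_p[p+1]_A$, where $\tilde S_p$ is the same sum with $T$ in place of $\overline{T}$.

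The arithmetic that makes the two parities close is the quantum-integer identity $A^{2m}+A^{-2m}=[2m+1]_A-[2m-1]_A$, which I would check directly from $[n]_A=(A^n-A^{-n})/(A-A^{-1})$. In the odd case $p=2m+1$ the boundary term left over from the Chebyshev reduction of $xS_{2m}$ is cancelled exactly by the normalization correction hidden in $S_{2m}$, leaving $S_{2m+1}=xS_{2m}-S_{2m-1}$ with no additive constant, as required by $\eta_{2m+1}=0$. In the even case $p=2m$ the consistency of the closed form with the recursion reduces precisely to the same identity $A^{2m}+A^{-2m}=[2m+1]_A-[2m-1]_A$, supplying the inhomogeneous term $\eta_{2m}(A^{2m}+A^{-2m})(\del_0+\del_1)$. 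The main obstacle is therefore bookkeeping rather than conceptual: correctly tracking the index shifts of the coefficients $[2j+1]_A$ under $x\overline{T}_n=\overline{T}_{n+1}+\overline{T}_{n-1}$ and aligning them with the $\overline{T}_0=1$ normalization so that the even- and odd-parity inhomogeneities emerge exactly as stated.
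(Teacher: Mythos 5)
Your proposal is correct and follows essentially the same route as the paper: both specialize Corollary~\ref{cor:sikiwang-3.3} at $q=0$, kill the auxiliary discrepancies via Lemma~\ref{lem:easy-discrep} using the parity criterion for arcs to get the two-term recursion with inhomogeneity $\delta_p(A^p+A^{-p})(\del_0+\del_1)$ for $p$ even, and then close the induction with the Chebyshev three-term relation. The only (harmless) differences are that you seed the induction at $p=-1,0$ via Lemma~\ref{lem:easy-discrep} rather than verifying $p=1,2$ by hand, and that you spell out the normalization bookkeeping ($S_p=\tilde S_p-\eta_p[p+1]_A$ and $[2m+1]_A-[2m-1]_A=A^{2m}+A^{-2m}$) that the paper leaves implicit.
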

\begin{proof}
    By Corollary \ref{cor:sikiwang-3.3}, applying Lemma \ref{lem:easy-discrep},
    \begin{align*}
        D\fourmat{p+1}{0}{0}{1}&=\two{1}{0}*D\fourmat{p}{0}{0}{1}-D\fourmat{p-1}{0}{0}{1}+A^{-p}D\fourmat{1}{p}{0}{-1}-D\fourmat{1}{p}{0}{0}*\two{0}{1}+A^pD\fourmat{1}{p}{0}{1}\\
        &=\two{1}{0}*D\fourmat{p}{0}{0}{1}-D\fourmat{p-1}{0}{0}{1}+\delta_p(A^{-p}+A^p)(\del_0+\del_1)\\
    \end{align*}
    where $\delta_p=0$ is $p$ is odd and $1$ is $p$ is even.
    Using this recurrence relation, one can show the claim by induction on $p$. The following property of the Chebyshevs is useful.
    \begin{equation}\label{eq:chev-identity}
        \twos{1}{0}*T_k(\twos{1}{0})=T_{k+1}(\twos{1}{0})+T_{k-1}(\twos{1}{0}).
    \end{equation}
    The base cases $p=1,2$ can be verified by hand.
\end{proof}

\begin{corollary}
    Let $N_1,N_2,K_2\in \Z_{\geq 0}$ such that $N_1+K_1$ and $N_2+K_2$ are odd and $1=K_2N_1-N_2K_1$. Then for all $p\geq 1$,
    \begin{align*}
        T_p(\two{N_1}{K_1})*\two{N_2}{K_2}
        =
        A^{p}\two{pN_1+N_2}{pK_1+K_2}_T
        +A^{-p}\two{pN_1-N_2}{pK_1-K_2}_T
        +
        (\del_0+\del_1)(\sum_{k=0}^{\lfloor p/2\rfloor}[2k+1]_A\cdot T_{p-2k}(\two{N_1}{K_1}))
    \end{align*}
\end{corollary}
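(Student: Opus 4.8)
The plan is to reduce the stated corollary to Proposition~\ref{prop:discrep-p001} by transporting that identity along the $\Lambda$-action of Lemma~\ref{lem:sl2z-action}. The corollary is, in the language of discrepancies, the assertion that
\[
D\fourmat{pN_1}{pK_1}{N_2}{K_2}=(\del_0+\del_1)\sum_{k=0}^{\lfloor p/2\rfloor}[2k+1]_A\cdot T_{p-2k}\!\left(\two{N_1}{K_1}\right),
\]
since $T_p(\twos{N_1}{K_1})=\twos{pN_1}{pK_1}_T$ (because $\gcd(N_1,K_1)=1$, forced by $K_2N_1-N_2K_1=1$) and the first two terms on the right of the corollary are exactly the leading terms subtracted off in the definition of $D$. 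So first I would rewrite the claim purely as a discrepancy identity.

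Next, the key step: Proposition~\ref{prop:discrep-p001} computes precisely $D\fourmat{p+1}{0}{0}{1}$, i.e. the case $(N_1,K_1)=(1,0)$ and $(N_2,K_2)=(0,1)$, shifted by one in $p$ (note the proposition's $p+1$ versus the corollary's $p$, so I would align indices carefully). The idea is that a generic pair $\bigl(\twos{N_1}{K_1},\twos{N_2}{K_2}\bigr)$ with $K_2N_1-N_2K_1=1$ can be carried to the standard pair $\bigl(\twos{1}{0},\twos{0}{1}\bigr)$ by a matrix $M\in\Lambda$. Concretely, I would seek $M=\lilmat{a}{b}{c}{d}\in\SL_2\Z$ with $a$ odd and $c$ even sending $\twos{N_1}{K_1}_g\mapsto\twos{1}{0}_g$ and $\twos{N_2}{K_2}_g\mapsto\twos{0}{1}_g$; the condition $K_2N_1-N_2K_1=1$ together with the parity hypotheses ($N_1+K_1$, $N_2+K_2$ odd) is exactly what guarantees such an $M$ exists inside $\Lambda$ rather than merely in $\SL_2\Z$, and that it preserves the arc/knot dichotomy so both $\twos{N_1}{K_1}$ and $\twos{N_2}{K_2}$ land on arcs. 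Since $M$ acts by an algebra automorphism of $\SRY(\san)$ sending $A\mapsto A^{\det M}=A$ (as $\det M=1$), it commutes with the product $*$, with taking Chebyshev polynomials $T_p$, and with the discrepancy operator $D$; and by the Remark following Lemma~\ref{lem:sl2z-action} it fixes $\del_0+\del_1$. Applying $M^{-1}$ to the identity of Proposition~\ref{prop:discrep-p001} therefore yields the general identity, with $\overline T$ replaced by $T$ harmlessly since $T_0(\twos{1}{0})=2$ gets pulled back to $T_0(\twos{N_1}{K_1})=2$ and the constant term is absorbed into the $(\del_0+\del_1)$ prefactor uniformly.

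The main obstacle I expect is the bookkeeping around parities and the $\Lambda$-versus-$\SL_2\Z$ distinction. Lemma~\ref{lem:sl2z-action} only gives a well-defined action for $\lilmat{a}{b}{c}{d}$ with $a$ odd, $c$ even, and the action-on-curves formula is written in the $\twos{n}{k}_g$ coordinates, which differ from the $\twos{n}{k}$ basis used in the corollary whenever $\gcd(n,k)>1$. So the delicate point is to verify that the chosen $M$ (i) exists with the required parity constraints given $N_1+K_1,\ N_2+K_2$ odd and determinant $1$, (ii) sends the pair to $(1,0),(0,1)$ under the coordinate formula of Lemma~\ref{lem:sl2z-action} once the $\frac{n+k}{2}$ reparametrization is accounted for, and (iii) maps both input arcs to arcs so that the $(\del_0+\del_1)$ terms survive unchanged. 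A safe alternative, should a single clean $M$ be awkward to pin down, is to bypass the group action entirely and instead prove the discrepancy identity directly by the same induction on $p$ used in Proposition~\ref{prop:discrep-p001}: feed the general pair into Corollary~\ref{cor:sikiwang-3.3} (or the Siki--Wang recursion), use Lemma~\ref{lem:easy-discrep} to kill the $|n_1k_2-n_2k_1|\le 1$ discrepancies except for the surviving $(\del_0+\del_1)$ arc term, and close the induction using identity~\eqref{eq:chev-identity}. The group-action route is cleaner if the matrix works out, so I would attempt it first and fall back on the direct induction otherwise.
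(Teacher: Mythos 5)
Your primary route coincides with the paper's own proof: the paper establishes the corollary by applying the $\Lambda$-action of Lemma~\ref{lem:sl2z-action} to each term of the identity in Proposition~\ref{prop:discrep-p001}, using the explicit matrix $\lilmat{N_2+K_2}{\frac{1}{2}(N_1+K_1-N_2-K_2)}{2N_2}{N_1-N_2}\in\Lambda$, which sends $\twos{1}{0}_g\mapsto\twos{N_1}{K_1}_g$ and $\twos{0}{1}_g\mapsto\twos{N_2}{K_2}_g$, i.e.\ precisely the inverse of the matrix $M$ whose existence (with the parity and arc-preservation properties) you postulate. Your fallback induction is therefore unnecessary, and the rest of your outline (fixing of $\del_0+\del_1$, compatibility with $T_p$ and with $*$) matches the paper's implicit justification.
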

\begin{proof}
Recall the action of $\Lambda$ on $\SRY(\san)$ given in Lemma \ref{lem:sl2z-action}. Consider the action of
\[
\begin{bmatrix}
    N_2+K_2&\frac{1}{2}(N_1+K_1-N_2-K_2)\\
    2N_2&N_1-N_2
\end{bmatrix}\in \Lambda
\]
on each term of the formula obtained in Proposition \ref{prop:discrep-p001}.
\end{proof}

\begin{corollary}
    For all $z\geq 0$,
    \begin{align*}
        T_p(\two{1}{2z})*\two{0}{1}
        =
        A^p\two{p}{2pz+1}
        +A^{-p}\two{p}{2pz-1}
        +
        (\del_0+\del_1)(\sum_{k=0}^{\lfloor p/2\rfloor}[2k+1]_A\cdot T_{p-2k}(\two{1}{2z})).
    \end{align*}
\end{corollary}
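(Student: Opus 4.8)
The plan is to recognize this identity as the special case of the preceding corollary obtained by setting $N_1 = 1$, $K_1 = 2z$, $N_2 = 0$, and $K_2 = 1$. First I would verify the hypotheses of that corollary for these values: the two parity conditions hold since $N_1 + K_1 = 1 + 2z$ and $N_2 + K_2 = 1$ are both odd, and the determinant condition holds since $K_2 N_1 - N_2 K_1 = 1\cdot 1 - 0\cdot 2z = 1$. As $z \geq 0$, all the relevant integers are nonnegative. Substituting into the preceding corollary gives
\[
T_p(\two{1}{2z}) * \two{0}{1} = A^p \two{p}{2pz+1}_T + A^{-p}\two{p}{2pz-1}_T + (\del_0 + \del_1)\left(\sum_{k=0}^{\lfloor p/2\rfloor}[2k+1]_A \cdot T_{p-2k}(\two{1}{2z})\right),
\]
which matches the claim up to the subscript $T$ on the two leading terms.

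The only remaining work is to remove those subscripts. By definition $\two{n}{k}_T = T_{\gcd(n,k)}(\two{n/\gcd(n,k)}{k/\gcd(n,k)})$, so I would compute the two relevant gcds. Writing $2pz \pm 1 = p(2z) \pm 1$, it is immediate that $\gcd(p, 2pz+1) = \gcd(p,1) = 1$ and likewise $\gcd(p, 2pz-1) = 1$. Hence in both cases the gcd is $1$, so $T_{\gcd} = T_1 = \id$ and $\two{p}{2pz\pm1}_T = \two{p}{2pz\pm1}$, yielding the stated formula.

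There is essentially no genuine obstacle here; the mathematical content is carried entirely by the preceding corollary, and this statement is a clean specialization. The only points requiring a moment's care are the elementary coprimality observation that $p$ divides neither $2pz + 1$ nor $2pz - 1$ (so that $p$ and $2pz \pm 1$ are coprime for every $z \geq 0$), and the bookkeeping check that the Chebyshev normalization is such that $T_1$ acts as the identity on a primitive curve class, so that the subscripts really do drop. It is also worth flagging that the nonnegativity of $K_1 = 2z$ should be read as implicit in the hypothesis list of the preceding corollary, which is harmless here since $z \geq 0$.
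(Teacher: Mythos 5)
Your proposal is correct and matches the paper's intent exactly: the paper states this corollary without separate proof, as an immediate specialization of the preceding corollary with $N_1=1$, $K_1=2z$, $N_2=0$, $K_2=1$, which is precisely your argument. Your added checks --- the parity and determinant hypotheses, and the observation that $\gcd(p,2pz\pm1)=1$ so that the $T$-subscripts drop since $T_1$ is the identity --- are the right (and only) details needed.
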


We can also calculate $D\lilmat{p+1}{0}{1}{1}$. First, we need a lemma.

\begin{lemma}\label{lem:discrep-1p02}
Let $p\geq 1$. Take representatives for the elements of $\Z/4\Z$ in the set $\{-1,0,1,2\}$. Let $[p]$ be the representative of $p$.
    \[
    D\fourmat{1}{p}{0}{2}=(1-\delta_{[p]}^2)(\del_0+\del_1)A^{[p]}\two{\frac{p+[p]}{2}}{1}+(\delta_{[p]}^{-1}+\delta_{[p]}^1)(\del_0\del_1+(A+A^{-1})^2)
    \]
\end{lemma}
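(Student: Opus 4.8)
The plan is to deduce the lemma from the $\Lambda$-equivariance of the Frohman--Gelca discrepancy, which converts the period-$4$ dependence on $p$ into a single symmetry shifting $p$ by $4$, and then to verify finitely many base cases by hand.

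First I would record how $\Lambda$ acts on torus-knot indices. For $M=\fourmat{a}{b}{c}{d}\in\Lambda$ the formula of Lemma~\ref{lem:sl2z-action} is linear in $(n,k)$, so it sends the index $(n,k)$ of $\two{n}{k}_g$ to $\bar M(n,k)$, where $\bar M=\fourmat{d+c/2}{c/2}{a+2b-d-c/2}{a-c/2}$; integrality of $\bar M$ uses that $c$ is even, and $\det\bar M=\det M=1$, so $\bar M\in\SL_2\Z$. Because $M$ acts as an algebra automorphism fixing $A$ (as $\det M=1$) and respecting the Chebyshev polynomials $T_{\gcd(n,k)}$ defining $\two{n}{k}_T$ (note $\bar M\in\SL_2\Z$ preserves $\gcd$), it sends $\two{n}{k}_T\mapsto\two{\bar M(n,k)}{}_T$, and by the Remark after Lemma~\ref{lem:sl2z-action} it fixes the central elements $\del_0+\del_1$ and $\del_0\del_1+(A+A^{-1})^2$. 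Since $\bar M$ preserves the determinant of a pair of columns, the powers of $A$ in the definition of $D$ are preserved, and hence $M$ carries $D\fourmat{n_1}{n_2}{k_1}{k_2}$ to the discrepancy whose columns are the $\bar M$-images of $(n_1,k_1)$ and $(n_2,k_2)$.

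Next I would exhibit the symmetry. A brief mod-$2$ analysis shows that any $\bar M$ fixing the column $(1,0)$ is upper-triangular unipotent with even off-diagonal entry, so the smallest available shift of $p$ (keeping the first column $(1,0)$ and $k_2=2$) is by $4$ --- precisely the period of the claimed formula. Taking $M=\fourmat{3}{-1}{4}{-1}\in\Lambda$ gives $\bar M=\fourmat{1}{2}{0}{1}$, which fixes $(1,0)$ and sends $(p,2)\mapsto(p+4,2)$; therefore $M\cdot D\fourmat{1}{p}{0}{2}=D\fourmat{1}{p+4}{0}{2}$. Applying $M$ to the proposed right-hand side $F(p)$ and using $[p+4]=[p]$, the central factors and the power $A^{[p]}$ are unchanged while $\two{(p+[p])/2}{1}\mapsto\two{(p+[p])/2+2}{1}=\two{((p+4)+[p+4])/2}{1}$, so $M$ carries the asserted identity at $p$ to the asserted identity at $p+4$. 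It therefore suffices to verify the four base cases $p\in\{1,2,3,4\}$, one per residue class modulo $4$.

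Finally I would compute these base cases directly. Each equals $\two{1}{0}_T*\two{p}{2}_T-A^2\two{p+1}{2}_T-A^{-2}\two{p-1}{2}_T$ (using $\two{1-p}{-2}_T=\two{p-1}{2}_T$), a product of two classes of geometric intersection number $2$ by Lemma~\ref{lem:intersection-nb}. I would resolve these either skein-theoretically via Definition~\ref{def:RYalgebra}, first unthreading the Chebyshev $T_2$-factors through Example~\ref{ex:T2arc} and then reducing the two crossings, or more systematically by rewriting each class in the generators $\a,\b,\g$ and simplifying with the cubic relation~\eqref{eqn:tor4} of Corollary~\ref{cor:real022pres}. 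I expect this step to be the main obstacle: in contrast to the intersection-$1$ products handled cleanly by Lemma~\ref{lem:easy-prod-sum}, here the quadratic boundary term $\del_0\del_1+(A+A^{-1})^2$ genuinely appears and must be tracked, and the parity of $p$ governs whether $\two{p}{2}_T$ carries a $T_2$-factor, so the four cases are not uniform. Once the base values are checked against $F(1)$, $F(2)=0$, $F(3)$, $F(4)$, the $\Lambda$-reduction of the previous paragraph finishes the proof for all $p\ge1$.
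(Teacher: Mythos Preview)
Your proposal is correct and takes essentially the same approach as the paper: both reduce via the $\Lambda$-action of Lemma~\ref{lem:sl2z-action} to four base cases (one per residue class modulo $4$) and then verify those by direct skein computation. The only difference is organizational---you use a single fixed shear $\bar M=\lilmat{1}{2}{0}{1}$ to shift $p\mapsto p+4$ and induct, whereas the paper uses four $p$-dependent matrices $M_{[p]}$ (whose induced actions $\bar M_{[p]}$ are powers of your $\bar M$) to send each $p$ directly to its base case.
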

\begin{proof}
    We will prove four cases (depending on the residue of $p\mod 4$) separately. Explicitly, we show the following formulas hold.
    \[
    v_1v_2\two{1}{0}*\two{p}{2}=
    \begin{cases}
       A^2\twos{p+1}{2}+A^{-2}\twos{p-1}{2}+A^{-1}(\del_0+\del_1)\twos{\frac{p-1}{2}}{1}+\del_0\del_1+A^2-A^{-2}+2&\qquad[p]=-1\\
       A^2\twos{p+1}{2}+A^{-2}\twos{p-1}{2}+2\twos{1}{0}+(\del_0+\del_1)\twos{p/2}{1}&\qquad[p]=0\\
       A^2\twos{p+1}{2}+A^{-2}\twos{p-1}{2}+A(\del_0+\del_1)\twos{\frac{p+1}{2}}{1}+\del_0\del_1-A^2+A^{-2}+2 &\qquad[p]=1\\
       A^2\twos{p+1}{2}+A^{-2}\twos{p-1}{2}+2\twos{1}{0}&\qquad[p]=2
    \end{cases}
    \]
    In each case, the argument has the same structure as the proof of Lemmas \ref{lem:easy-prod-sum}.

    For each $[p]\in \{-1,0,1,2\}$ consider the action given in Lemma \ref{lem:sl2z-action} of the matrix $M_{[p]}\in \Lambda$, where
    \[
    M_{-1}=\begin{bmatrix}
        \frac{5-p}{2}&\frac{p-3}{4}\\
        3-p& \frac{p-1}{2}
    \end{bmatrix},\quad
    M_0=\begin{bmatrix}
        1-p/2&p/4\\
        -p&p/2+1
    \end{bmatrix},\quad
    M_1=\begin{bmatrix}
        \frac{3-p}{2}&\frac{p-1}{4}\\
        1-p&\frac{p+1}{2}
    \end{bmatrix},\quad
    M_2=\begin{bmatrix}
        2-p/2&\frac{p-2}{4}\\
        2-p&p/2
    \end{bmatrix}.
    \]
    Finally, to calculate the discrepancies, notice
    \begin{align*}
        \two{p+1}{2}_T&=\begin{cases}
            \twos{p+1}{2} & [p]\in \{-1,0,2\}\\
            \twos{p+1}{2}-2& [p]=1
        \end{cases}\qquad\text{and}\qquad
        \two{p-1}{2}_T=\begin{cases}
            \twos{p-1}{2} & [p]\in \{0,1,2\}\\
            \twos{p-1}{2}-2& [p]=-1
        \end{cases}.\qedhere
    \end{align*}
\end{proof}

\begin{proposition}\label{prop:discrep-p011}
    For $p\geq 0$,
\begin{align*}
    D\fourmat{p+1}{0}{1}{1}&=\del(\sum_{k=1}^p a(p,k)\two{k}{1})
    +\del^2(\sum_{k=0}^pb(p,k)\cdot \overline{T}_k(\two{1}{0}))
    +\del'(\sum_{k=0}^pc(b,k)\cdot \overline{T}_k(\two{1}{0}))
\end{align*}
where $\del=\del_0+\del_1$ and $\del'=\del_0+\del_1+(A+A^{-1})^2$ and
\begin{align*}
    a(p,k)&=\begin{cases}
    A^k[k]_A&2k\leq p\quad\text{and}\quad k\equiv p \mod 2\\
    A^{p-k+1}[p-k+1]_A & 2k> p\quad\text{and}\quad k\equiv p \mod 2\\
    0&\text{otherwise}
\end{cases}\\
b(p,k)&=\begin{cases}
    A^{-k}\left( \delta_{[p-k]}^{-1}\frac{p-k+1}{4}
+\displaystyle{\sum_{h=1}^{\frac{p-k+[p-k]-2}{4}}}([p-k-4h+2]_A+[p-k-4h]_A)
\right)&k\leq p-3\quad\text{and}\quad k\equiv p-3\mod 2\\
0&\text{otherwise}
\end{cases}
\\
c(p,k)&=\begin{cases}
    A^{-k}\left[\frac{p-k+1}{2}\right]_{A^2}&k\leq p-1\quad\text{and}\quad k\equiv p-1\mod 2\\
    0&\text{otherwise}
\end{cases}
\end{align*}

\end{proposition}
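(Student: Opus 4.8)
The plan is to treat $D\fourmat{p+1}{0}{1}{1}$ as the special case $q=1$ of Corollary~\ref{cor:sikiwang-3.3}, exactly as Proposition~\ref{prop:discrep-p001} was obtained. Setting $q=1$ turns the right-hand side into a three-term recurrence in $p$,
\[
D\fourmat{p+1}{0}{1}{1}=A^{-1}\two{1}{0}*D\fourmat{p}{0}{1}{1}-A^{-2}D\fourmat{p-1}{0}{1}{1}+A^{-p-1}D\fourmat{1}{p}{0}{0}-A^{-1}D\fourmat{1}{p}{0}{1}*\two{0}{1}+A^{p-1}D\fourmat{1}{p}{0}{2},
\]
in which the last three discrepancies are inhomogeneous ``seed'' terms I can evaluate in closed form: $D\fourmat{1}{p}{0}{0}=0$ and $D\fourmat{1}{p}{0}{1}$ equals $\del_0+\del_1$ when $p$ is even and $0$ otherwise, both by Lemma~\ref{lem:easy-discrep} (the two curves $\two{1}{0}$, $\two{p}{1}$ are simultaneously arcs precisely when $p$ is even), while $D\fourmat{1}{p}{0}{2}$ is supplied by Lemma~\ref{lem:discrep-1p02}. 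Thus the proposition reduces to solving this recurrence with these seeds.

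I would then prove the closed formula by induction on $p$, checking enough small cases to seed a mod-$4$ induction (say $p\le 3$) by hand from Lemmas~\ref{lem:easy-prod-sum} and~\ref{lem:discrep-1p02}. The inductive step requires left-multiplying the inductive expression for $D\fourmat{p}{0}{1}{1}$ by $\two{1}{0}$, and two product rules handle this. The Chebyshev identity~\eqref{eq:chev-identity}, $\two{1}{0}*\overline{T}_k(\two{1}{0})=\overline{T}_{k+1}(\two{1}{0})+\overline{T}_{k-1}(\two{1}{0})$, governs the $\del^2$- and $\del'$-families, while Lemma~\ref{lem:easy-prod-sum} in the specialization $v_1=v_2=1$ gives $\two{1}{0}*\two{k}{1}=A\two{k+1}{1}+A^{-1}\two{k-1}{1}+\delta_{arcs}(\del_0+\del_1)$, since $\lildet{1}{k}{0}{1}=1$, and governs the $\del$-family. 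In this language the appearance of higher powers of $\del$ is transparent: the $\delta_{arcs}(\del_0+\del_1)$ produced when $\two{1}{0}$ meets an even-index $\two{k}{1}$, together with the $(\del_0+\del_1)$-valued seed $D\fourmat{1}{p}{0}{1}$ and the $\del_0\del_1+(A+A^{-1})^2$ contributions of Lemma~\ref{lem:discrep-1p02}, are exactly what feed the $\del^2$- and $\del'$-families in the target formula.

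The real work, and the main obstacle, is the coefficient bookkeeping: verifying that the piecewise closed forms $a(p,k)$, $b(p,k)$, $c(p,k)$ are preserved by the recurrence. For the $a$-family, the induced three-term recursion (coming from the shifts $k\mapsto k\pm1$ above and the $-A^{-2}D\fourmat{p-1}{0}{1}{1}$ term) should collapse to the stated split into $A^{k}[k]_A$ for $2k\le p$ and $A^{p-k+1}[p-k+1]_A$ for $2k>p$, the two branches reflecting the ``reflection'' at $2k=p$; this uses only the elementary quantum-integer recurrence $[m+1]_A+[m-1]_A=(A+A^{-1})[m]_A$. The $c$-family is a single shifted quantum integer $A^{-k}\bigl[\tfrac{p-k+1}{2}\bigr]_{A^2}$ and should follow from a clean two-term recursion.

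The genuinely delicate case is $b(p,k)$, whose defining nested sum $\sum_h h\bigl([p-k-4h+2]_A+[p-k-4h]_A\bigr)$ has a summation range governed by the residue $[p-k]\in\{-1,0,1,2\}$ modulo $4$, matching precisely the four-case structure of Lemma~\ref{lem:discrep-1p02}. I expect the induction here must be organized by residue class of $p$ modulo $4$, and the crux will be showing that incrementing $p$ shifts the nested sum consistently, i.e.\ that the new $\del^2$-seed contributed by Lemma~\ref{lem:discrep-1p02} (via the $A^{p-1}$-weighted term) is exactly the increment needed to extend the sum by its next summand while preserving the leading $\delta_{[p-k]}^{-1}\tfrac{p-k+1}{4}$ term. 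This is a finite but intricate verification entangled with the parity conventions (the $\epsilon$ of Figure~\ref{fig:geometric-basis} and the representative $[\cdot]$ of $\Z/4\Z$), so I would carry it out case by case in each residue class rather than attempt a uniform argument.
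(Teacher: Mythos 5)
Your proposal follows essentially the same route as the paper: apply Corollary~\ref{cor:sikiwang-3.3} with $q=1$, evaluate the seed discrepancies $D\fourmat{1}{p}{0}{0}$, $D\fourmat{1}{p}{0}{1}$, $D\fourmat{1}{p}{0}{2}$ via Lemmas~\ref{lem:easy-discrep} and~\ref{lem:discrep-1p02}, and then prove the closed form by induction on $p$ using the Chebyshev identity~\eqref{eq:chev-identity} (together with Lemma~\ref{lem:easy-prod-sum} for the $\two{k}{1}$ terms) with small base cases checked by hand. This matches the paper's argument, and your additional remarks on the coefficient bookkeeping only flesh out what the paper leaves as a routine induction.
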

\begin{proof}
    By Corollary \ref{cor:sikiwang-3.3}, applying Lemma \ref{lem:easy-discrep} and Lemma \ref{lem:discrep-1p02},
    \begin{align*}
    D\fourmat{p+1}{0}{1}{1}&=A^{-1}\two{1}{0}*D\fourmat{p}{0}{1}{1}-A^{-2}D\fourmat{p-1}{0}{1}{1}+A^{-p-1}D\fourmat{1}{p}{0}{0}-A^{-1}D\fourmat{1}{p}{0}{1}*\two{0}{1}+A^{p-1}D\fourmat{1}{p}{0}{2}\\
    &=
A^{-1}\two{1}{0}*D\fourmat{p}{0}{1}{1}-A^{-2}D\fourmat{p-1}{0}{1}{1}
    -A^{-1}(\delta_{[p]}^0+\delta_{[p]}^2)(\del_0+\del_1)\two{0}{1}\\
    &\qquad+A^{p-1+[p]}(1-\delta_{[p]}^2)(\del_0+\del_1)\two{\frac{p+[p]}{2}}{1}
    +A^{p-1}(\delta_{[p]}^{-1}+\delta_{[p]}^1)(\del_0\del_1+(A+A^{-1})^2)
    \end{align*}
    Using this recurrence relation and \eqref{eq:chev-identity}, this claim can be proved by induction. The base cases $D\lilmat{1}{0}{1}{1}=0$ and $D\lilmat{2}{0}{1}{1}=(\del_0+\del_1)A\twos{1}{1}+\del_0\del_1+(A+A^{-1})^2$ can be verified by hand.
\end{proof}

\bibliographystyle{plain}
\bibliography{ResearchChloe}

\end{document}